\newcommand{\floor}[1]{\left \lfloor #1 \right \rfloor}
\newcommand{\doublewidetilde}[1]{{%
  \mathpalette\double@widetilde{#1}%
}}
\newcommand{\double@widetilde}[2]{%
  \sbox\z@{$\m@th#1\widetilde{#2}$}%
  \ht\z@=.9\ht\z@
  \widetilde{\box\z@}%
}
\newtheorem{theorem}{Theorem}[section]
\newtheorem{theorem-definition}[theorem]{Theorem-Definition}
\newtheorem{theorem-construction}[theorem]{Theorem-Construction}
\newtheorem{lemma-definition}[theorem]{Lemma--Definition}
\newtheorem{lemma-construction}[theorem]{Lemma--Construction}
\newtheorem{lemma}[theorem]{Lemma}
\newtheorem{proposition}[theorem]{Proposition}
\newtheorem{corollary}[theorem]{Corollary}
\newtheorem{conjecture}[theorem]{Conjecture}
\theoremstyle{definition}
\newtheorem{definition}[theorem]{Definition}
\newtheorem{remark}[theorem]{Remark}
\newtheorem{example}[theorem]{Example}
\newcommand{\old}[1]{}
\newcommand{\Z}{{\mathbb Z}}
\newcommand{\R}{{\mathbb R}}
\newcommand{\C}{{\mathbb C}}
\newcommand{\Q}{{\mathbb Q}}
\newcommand{\T}{{\mathbb T}}
\newcommand{\spectralcurve}{{\mathcal C}}
\renewcommand{\P}{{\mathbb P}}
\newcommand{\extp}{\@ifnextchar^\@extp{\@extp^{\,}}}
\def\@extp^#1{\mathop{\bigwedge\nolimits^{\!#1}}}
\newcommand\restr[2]{{
  \left.\kern-\nulldelimiterspace 
  #1 
  \vphantom{\big|} 
  \right|_{#2} 
  }}
\definecolor{calpolypomonagreen}{rgb}{0, 0.6, 0.2}
\newcommand{\lms}{\longmapsto}
\newcommand{\lra}{\longrightarrow}
\newcommand{\ra}{\rightarrow}
\newcommand{\divebw}{Y}
\newcommand{\be}{\begin{equation}}
\newcommand{\ee}{\end{equation}}
\newcommand{\bt}{\begin{theorem}}
\newcommand{\et}{\end{theorem}}
\newcommand{\bd}{\begin{definition}}
\newcommand{\ed}{\end{definition}}
\newcommand{\bp}{\begin{proposition}}
\newcommand{\ep}{\end{proposition}}
\newcommand{\bl}{\begin{lemma}}
\newcommand{\el}{\end{lemma}}
\newcommand{\bc}{\begin{corollary}}
\newcommand{\ec}{\end{corollary}}
\newcommand{\bcon}{\begin{conjecture}}
\newcommand{\econ}{\end{conjecture}}
\newcommand{\la}{\label}
\newcommand{\w}{{\rm w}}
\newcommand{\bw}{{\rm b}}
\newcommand{\dd}{{\bf D}}
\newcommand{\dimer}{{\mathrm{m}}}
\def\deg{\operatorname{deg}}
\def\div{\operatorname{div}}
\def\spec{\operatorname{Spec}}
\DeclareMathOperator{\coker}{coker}
\DeclareMathOperator{\cone}{cone}
\newcommand{\red}[1]{{\color{red} #1}}
\tikzset{>=latex}
\tikzset{mid arrow/.style={postaction={decorate,decoration={
				markings,
				mark=at position .5 with {\arrow{latex}}
	}}},
	mid rarrow/.style={postaction={decorate,decoration={
				markings,
				mark=at position .5 with {\arrow{latex reversed}}
	}}},
}
\tikzset{qvert/.style={draw,black,circle,fill=gray,minimum size=5pt,inner sep=0pt}  } 
\tikzset{bvert/.style={draw,circle,fill=black,minimum size=5pt,inner sep=0pt}  }  
\tikzset{wvert/.style={draw,circle,fill=white,minimum size=5pt,inner sep=0pt}  } 
\tikzset{fvert/.style={text=blue}  } 
\tikzset{sqvert/.style={draw,black,rectangle,fill=black,minimum size=5pt,inner sep=0pt}  } 
\tikzset{lvert/.style={draw,circle,fill=black,minimum size=4pt,inner sep=0pt}  } 
\tikzset{shifted path/.style args={from #1 to #2 by #3}{insert path={
			let \p1=($(#1.east)-(#1.center)$),
			\p2=($(#2.east)-(#2.center)$),\p3=($(#1.center)-(#2.center)$),
			\n1={veclen(\x1,\y1)},\n2={veclen(\x2,\y2)},\n3={atan2(\y3,\x3)} in
			(#1.{\n3+180+asin(#3/\n1)}) to (#2.{\n3-asin(#3/\n2)})
}}}
\newcommand{\Addresses}{{
  \bigskip
  \footnotesize

  \textsc{University of Michigan, Department of Mathematics, 2844 East Hall, 530 Church Street, Ann Arbor, MI 48109-1043, USA}\par\nopagebreak
  \textit{E-mail address}: \texttt{georgete at umich.edu}

 \medskip
\textsc{Yale University, 12 Hillhouse, New Haven, CT 06511, USA}\par\nopagebreak
  \textit{E-mail address}: \texttt{alexander.goncharov at yale.edu}
  
   \medskip
  \textsc{Yale University, 12 Hillhouse, New Haven, CT 06511, USA}\par\nopagebreak
  \textit{E-mail address}: \texttt{richard.kenyon at yale.edu}

}}
   \def\MR#1{}
\begin{document}

\title{The inverse spectral map for dimers}

\author{T. George  \and A. B. Goncharov  \and R. Kenyon }

\maketitle
\begin{abstract}
    In 2015, Vladimir Fock proved that the spectral transform, associating to an element of a dimer cluster integrable system its spectral data, is birational by constructing an inverse map using theta functions on Jacobians of spectral curves. We provide an alternate construction of the inverse map that involves only rational functions in the spectral data.
\end{abstract}
\tableofcontents

\section{Introduction}

The planar dimer model is a classical statistical mechanics model, involving the study of the 
set of \emph{dimer covers} (perfect matchings) of a planar, edge-weighted graph. In the 1960s,
Kasteleyn \cites{Kast61,Kast63} and Temperley and Fisher \cite{TF61} showed how to compute the (weighted) number of dimer covers
of planar graphs using the determinant of a signed adjacency matrix now known as the 
\emph{Kasteleyn matrix}. 

In mathematics the dimer model was popularized with the papers \cites{EKLP1,EKLP2} on the ``Aztec diamond" and later with results on the local statistics \cite{K97}, conformal invariance \cite{K00}, and limit shapes \cite{CKP},  connections with algebraic geometry \cites{KOS, KO}, cluster varieties and integrability \cite{GK12},
and string theory \cite{HK}. 

While the dimer model can be considered from a purely combinatorial point of view,
it also has a rich integrable structure, first described in \cite{GK12}.
The integrable structure on dimers on graphs on the torus 
was found to generalize many well-known integrable systems, see for example
\cite{FM} and \cite{AGR}. What is especially important is that the related integrable system is of cluster nature, and this allows one to immediately quantize it, getting a quantum integrable system. 

From the point of view of classical mechanics, 
associated to the dimer model on a bipartite graph on a torus 
(or equivalently a periodic bipartite planar graph) is a Poisson variety with a Hamiltonian integrable
system. Underlying this system is an algebraic curve $\spectralcurve = \{P(z,w)=0\}$ 
(called the \emph{spectral curve}) and a divisor on this curve--essentially a set of $g$ distinct points 
$\{(p_1,q_1),\dots,(p_g,q_g)\}$ on $\spectralcurve$. 
This is the \emph{spectral data} associated to the model.
It was shown in \cite{KO} that the map from the weighted graph to the spectral data was bijective,
from the space of ``face weights" (see below) to the moduli space of genus-$g$ 
curves and effective degree-$g$ divisors on the open spectral curve $\spectralcurve^\circ$.
Subsequently Fock \cite{Fock} constructed the inverse spectral map (from the spectral data to the face weights), describing it in terms of theta functions over the spectral curve. The special case of genus $0$ was described earlier in \cites{Kenyon.isoradial, KO} and an explicit construction in the case of genus $1$ was more recently given in \cite{BCdT}. Positivity of Fock's inverse map was studied in \cite{BCdT1}.    

In the current paper, we show that the inverse map can be given an explicit \emph{rational} expression in terms of the divisor points $(p_i,q_i) \in \spectralcurve^\circ$ and the points of $\spectralcurve$ at toric infinity.
An exact statement is given in Theorem \ref{Th2.3} below. 

While Fock's construction is very natural and interacts nicely with  positivity, it involves theta functions. Our construction gives the inverse map as ratios of certain determinants in the spectral data and can be explicitly computed using computer algebra. 
We briefly describe our construction now. The spectral data is defined via a matrix $K=K(z,w)$ called the Kasteleyn matrix, whose rows are indexed by white vertices, columns by black vertices, and whose entries are Laurent polynomials in $z$ and $w$. Let  us consider  the adjugate matrix of $K$:
\[
Q={Q(z,w)=}K^{-1}\det K.
\]
 The matrix $Q$ is important when studying the probabilistic aspects of the dimer model (on the 
lift of the graph on the torus to the plane): the edge occupation variables form a determinantal process whose kernel is given by the Fourier coefficients of $Q/P$, as discussed in \cite{KOS}. In the present work,
we have a different use for $Q$: finding (a column of) the matrix $Q$ from the spectral data allows us to reconstruct the face weights and
thereby invert the spectral transform. 

The points $(p_i,q_i)\in \spectralcurve$ are defined to be the points where a column of $Q$, 
corresponding to a {fixed} white vertex ${\bf w}$,  vanishes. We show that entries in the ${\bf w}$-column of $Q$, which are Laurent polynomials, can be reconstructed from the spectral data by solving a linear system of equations. Some of the linear equations are easy to describe: for {any} black vertex $\bw$, we have $Q_{\bw {\bf w}}(p_i,q_i)=0$ for $i=1,\dots,g$, which are $g$ linear equations in the coefficients of {the Laurent polynomial} $Q_{\bw {\bf w}}$. However, these equations are usually not sufficient to determine the coefficients of $Q_{\bw {\bf w}}$. We find additional equations from the vanishing of $Q_{\bw {\bf w}}$ at certain points at infinity of the spectral curve $\spectralcurve$, and show that these equations  determine $Q_{\bw {\bf w}}$ uniquely, up to a non-zero constant. We then give a procedure to reconstruct the weights from the ${\bf w}$-column of $Q$. 

{A key construction in our approach is the extension of the Kasteleyn matrix $K$ to a map of vector bundles on a toric stack, for which we make crucial use of the classification of line bundles on toric stacks and the computation of their cohomology developed in \cite{BH09}. Toric stacks already appear implicitly in the context of the spectral transform in \cite{KO} and explicitly \cite{TWZ18}.}

The article is organized as follows. In Section \ref{sec:background} we review the dimer cluster integrable system and the spectral transform. In Section \ref{sec2}, we state Theorem \ref{Vbwthm}, which is our main result, and describe the reconstruction procedure. We work out two detailed examples in Section \ref{sec:example}. Sections \ref{smallpolysection}, \ref{extensionsection} and \ref{laurentsection} contain proofs of our results. In Appendix~\ref{A}, we review results from toric geometry. In Appendix~\ref{B}, we provide explicit combinatorial descriptions for some of our constructions. These are useful for computations.

\vskip 2mm
{\bf Acknowledgments}. The work of A.G. was supported by the NSF grants DMS-1900743, DMS-2153059. 
Work of R. K. was supported by NSF grant DMS-1940932 and the Simons Foundation grant 327929. We thank the referee for their careful reading of the manuscript and their numerous suggestions.

\section{Background}\la{sec:background}

For further information about the material in this section see \cite{GK12}.

\subsection{Dimer models}

Let $\Gamma$ be a bipartite graph on the torus $\T\cong S^1\times S^1$ such that 
  the connected components of the complement of $\Gamma$---the faces---are contractible. We denote by $B(\Gamma)$ and $W(\Gamma)$ the black and white vertices of $\Gamma$, by $V(\Gamma)$ the  vertices, and by $E(\Gamma)$ the edges of $\Gamma$. When the graph is clear from context, we will usually abbreviate these to $B,W, V$ and $E$.

A \textit{dimer model on the torus} is a pair $(\Gamma, [{wt}])$, where $\Gamma$ is a bipartite graph on the torus as above and
$[wt] \in H^1(\Gamma,\C^\times)$ (Here and throughout the paper, $\C^\times$ denotes the group of nonzero complex numbers under multiplication).
 For a loop $L$ and a cohomology class $[wt]$, we denote by  $[wt]([L])$ the pairing between the cohomology and  the  homology.  
We  orient edges from their black vertex to their white vertex.
The cohomology class $[wt]$ can be represented by a cocycle $wt$ which, using this 
 orientation, can be identified with a 
$\C^\times-$valued function on the edges of $\Gamma$ called an \emph{edge weight}.

The edge weight is well-defined modulo multiplication by coboundaries, which are functions on edges $e=\bw \w$
given by $ f(\w)f(\bw)^{-1}$  for functions $f:V(\Gamma)\to\C^\times$. 
Note that the weight of a loop is not the product of its edge weights,
but the ``alternating product" of its edge weights: edges oriented against the orientation of the loop are
multiplied with exponent $-1$.

A \textit{dimer cover} or \textit{perfect matching} $\dimer$ of $\Gamma$ is a subset of $E(\Gamma)$ such that each vertex of $\Gamma$ is incident to exactly one edge in $\dimer$.
Let $\mathscr M$ denote the set of dimer covers of $\Gamma$. If we fix a reference dimer cover $\dimer_0$, we get a function
\begin{align*}
    \pi_{\dimer_0}: \mathscr M &\ra H_1(\T,\Z)\\
    \dimer &\mapsto [\dimer-\dimer_0].
\end{align*}
Here   $\dimer-\dimer_0$ is the $1$-chain which assigns $1$ to (oriented) edges of $\dimer$ and $-1$ to (oriented) 
edges of $\dimer_0$, so $\dimer-\dimer_0$ is a union of oriented cycles and doubled edges, whose homology
class is $[\dimer-\dimer_0]$.

The \emph{Newton polygon} of $\Gamma$ is the polygon
\[
N(\Gamma) :=\text{Convex-hull}(\pi_{\dimer_0} (\mathscr M)) \subset H_1(\T,\R)
\]
{defined modulo translation by $H_1(\T,\Z)$. Changing the reference dimer cover from $\dimer_0$ to $\dimer_0'$ results in a translation of the polygon by $[\dimer_0-\dimer_0']$, so the Newton polygon does not depend on the choice.}

We assume that $\Gamma$ is such that $N(\Gamma)$ has interior. This is a nondegeneracy condition on $\Gamma$. {(When $N$ has empty interior, the graph $\Gamma$ is equivalent under certain elementary
transformations to a graph whose lift to $\R^2$ is disconnected, that is, has noncontractible faces;
such a graph breaks into essentially one-dimensional components, and there is no integrable system.)}

\subsection{Zig-zag paths and the Newton polygon}

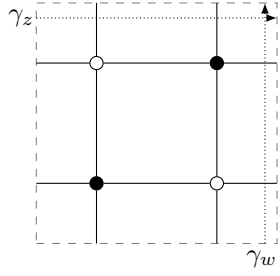
\begin{figure}
\centering
	\begin{tikzpicture}[scale=0.8,baseline={([yshift=-.7ex]current bounding box.center)}]  
	
		\draw[dashed, gray] (0,0) rectangle (4,4);
	
		\coordinate[bvert] (w2) at (1,1);
			\coordinate[bvert] (w1) at (3,3);
			
			\coordinate[wvert] (b1) at (1,3);
			\coordinate[wvert] (b2) at (3,1);
			
			\draw[-]
				(b1) 
				edge  (w1) edge  (w2) edge (1,4) edge (0,3)
				;
			\draw[-] (b2) 
				edge  (w2) edge  (w1) edge (4,1) edge (3,0)
				;
			\draw[-]	(w1) edge (3,4) edge (4,3);
	\draw[-]	(w2) edge (1,0) edge (0,1);
	\draw[->,densely dotted] (3.8,0) -- (3.8,4);
		\draw[->,densely dotted] (0,3.75) -- (4,3.75);
		\node (no) at (-0.25,3.75) {$\gamma_z$};
		\node (no) at (3.75,-0.25) {$\gamma_w$};

	\end{tikzpicture}
\caption{The fundamental rectangle $R$, along with the cycles $\gamma_z,\gamma_w$.}\label{figgzgw}
\end{figure}
A \emph{zig-zag path} in $\Gamma$ is a closed path that turns maximally right at each black vertex and maximally left at each white vertex. {The \textit{medial graph} of $\Gamma$ is the graph $\Gamma^\times$ that has a vertex $v_e$ at the mid-point of each edge $e$ of $\Gamma$ and an edge between $v_{e}$ and $v_{e'}$ whenever $e$ and $e'$ occur consecutively around a face of $\Gamma$. Note that by construction, each vertex of $\Gamma^\times$ has degree $4$. A zig-zag path in $\Gamma$ corresponds to a cycle in $\Gamma^\times$ that goes straight through each degree four vertex, i.e., at every vertex, the outgoing edge of the cycle is the one that is opposite the incoming one (see Figure~\ref{figmedial}). Hereafter, when we say zig-zag path, we mean the corresponding cycle in the medial graph. }

Let $\widetilde \Gamma$ be the biperiodic graph on the plane given by the lift of $\Gamma$ to the universal cover of $\T$. The bipartite graph $\Gamma$ is said to be \textit{minimal} if the lift
of any zig-zag path does not self-intersect, and lifts of any two zig-zag paths do not have ``parallel bigons'',
where by \emph{parallel bigon} we mean two consecutive intersections where both paths are oriented in the same direction from one to the next.
For a minimal bipartite graph $\Gamma$ on the torus, the Newton polygon has an alternative description in terms of 
the zig-zag paths of $\Gamma$. 
Namely, since $\Gamma$ is embedded in $\T$, each zig-zag path $\alpha$ has a non-zero homology class $[\alpha] \in H_1(\T,\Z)$. The polygon $N(\Gamma)$ is the unique convex integral polygon defined modulo translation in $H_1(\T,\Z)$ whose integral primitive edge vectors in counterclockwise order around $N$ are given by the vectors $[\alpha]$ for all zig-zag paths~$\alpha$.

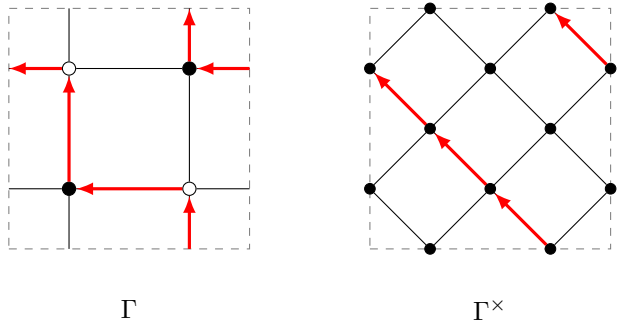
\begin{figure}
	\centering
	
	\begin{tikzpicture}[scale=0.8]
\begin{scope}
	\node[](no) at (2,-1) {$\Gamma$};
	\draw[dashed, gray] (0,0) rectangle (4,4);
	
	\coordinate[bvert] (w2) at (1,1);
	\coordinate[bvert] (w1) at (3,3);
	
	\coordinate[wvert] (b1) at (1,3);
	\coordinate[wvert] (b2) at (3,1);
	
	\draw[-]
	(b1) 
	edge  (w1) edge  (w2) edge (1,4) edge (0,3)
	;
	\draw[-] (b2) 
	edge  (w2) edge  (w1) edge (4,1) edge (3,0)
	;
	\draw[-]	(w1) edge (3,4) edge (4,3);
	\draw[-]	(w2) edge (1,0) edge (0,1);
	\draw[->,red,very thick] (3,0)--(b2);
	\draw[->,red,very thick](b2)--(w2);
	\draw[->,red,very thick]
	(w2)--(b1);
	\draw[->,red,very thick]
	(b1)--(0,3);
	\draw[->,red,very thick]
	(4,3)--(w1);
	\draw[->,red,very thick]
	(w1)--(3,4)
	;
\end{scope}		

\begin{scope}[shift={(6,0)}]
	\node[](no) at (2,-1) {$\Gamma^\times$};
	\draw[dashed, gray] (0,0) rectangle (4,4);
	
	\coordinate[lvert] (01) at (0,1);
	\coordinate[lvert] (03) at (0,3);
	
		\coordinate[lvert] (41) at (4,1);
	\coordinate[lvert] (43) at (4,3);
	\coordinate[lvert] (10) at (1,0);
	\coordinate[lvert] (14) at (1,4);
	
	\coordinate[lvert] (12) at (1,2);
	\coordinate[lvert] (32) at (3,2);
		\coordinate[lvert] (30) at (3,0);
			\coordinate[lvert] (34) at (3,4);
	
	\coordinate[lvert] (21) at (2,1);
	\coordinate[lvert] (23) at (2,3);
	
	\draw[] (10) -- (43)
	(01) -- (34)
	(03) -- (14)
	(30)--(41)
	(10)--(01)
	(30)--(03)
	(41)--(14)
	(43)--(34)
	;
\draw[->, red, very thick]	(30)--(21); 
\draw[->, red, very thick]	(21)--(12); 
\draw[->, red, very thick]	(12)--(03); 
\draw[->, red, very thick]	(43)--(34); 

\end{scope}
		
	\end{tikzpicture}	
	\caption{{A zig-zag path in a graph $\Gamma$ and the corresponding cycle in the medial graph $\Gamma^\times$.}}\label{figmedial}
\end{figure}

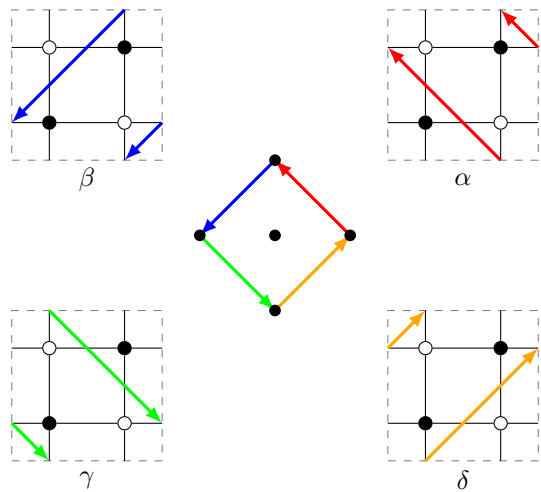
\begin{figure}
\begin{center}
	\begin{tikzpicture}[baseline={([yshift=-12ex]current bounding box.center)}] 
 \def\lw{1.5};
	\begin{scope}
    \begin{scope}[decoration={
    markings,
    mark=at position 1 with {\arrow{>}}}
    ] 
	\draw[postaction={decorate},red,very thick] (1,0)--(0,1);
		\draw[postaction={decorate},blue,very thick] (0,1)--(-1,0);
		\draw[postaction={decorate},green,very thick] (-1,0)--(0,-1);
		\draw[postaction={decorate},orange!70!yellow,very thick] (0,-1)--(1,0);
\end{scope}
		
				\draw[fill=black] (0,0) circle (2pt);
		
		\draw[fill=black] (0,1) circle (2pt);
		\draw[fill=black] (1,0) circle (2pt);
		\draw[fill=black] (0,-1) circle (2pt);
			\draw[fill=black] (-1,0) circle (2pt);
\end{scope}			
\begin{scope}[scale=0.5,shift={(3,2)}]
\node[](no) at (2,-0.5) {$\alpha$};
    		\draw[dashed, gray] (0,0) rectangle (4,4);
	
		\coordinate[bvert] (w2) at (1,1);
			\coordinate[bvert] (w1) at (3,3);
			
			\coordinate[wvert] (b1) at (1,3);
			\coordinate[wvert] (b2) at (3,1);
			
			\draw[-]
				(b1) 
				edge  (w1) edge  (w2) edge (1,4) edge (0,3)
				;
			\draw[-] (b2) 
				edge  (w2) edge  (w1) edge (4,1) edge (3,0)
				;
			\draw[-]	(w1) edge (3,4) edge (4,3);
	\draw[-]	(w2) edge (1,0) edge (0,1);
	\draw[->,red,very thick] (3,0)--(0,3);
	\draw[->,red,very thick](4,3)--(3,4);

\end{scope}	
\begin{scope}[scale=0.5,shift={(-7,2)}]

\node[](no) at (2,-0.5) {$\beta$};
    		\draw[dashed, gray] (0,0) rectangle (4,4);
	
		\coordinate[bvert] (w2) at (1,1);
			\coordinate[bvert] (w1) at (3,3);
			
			\coordinate[wvert] (b1) at (1,3);
			\coordinate[wvert] (b2) at (3,1);
			
			\draw[-]
				(b1) 
				edge  (w1) edge  (w2) edge (1,4) edge (0,3)
				;
			\draw[-] (b2) 
				edge  (w2) edge  (w1) edge (4,1) edge (3,0)
				;
			\draw[-]	(w1) edge (3,4) edge (4,3);
	\draw[-]	(w2) edge (1,0) edge (0,1);
	\draw[->,blue,very thick] (3,4)--(0,1);
	\draw[->,blue,very thick](4,1)--(3,0);
\end{scope}	

\begin{scope}[scale=0.5,shift={(3,-6)}]
\node[](no) at (2,-0.5) {$\delta$};
    		\draw[dashed, gray] (0,0) rectangle (4,4);
	
		\coordinate[bvert] (w2) at (1,1);
			\coordinate[bvert] (w1) at (3,3);
			
			\coordinate[wvert] (b1) at (1,3);
			\coordinate[wvert] (b2) at (3,1);
			
			\draw[-]
				(b1) 
				edge  (w1) edge  (w2) edge (1,4) edge (0,3)
				;
			\draw[-] (b2) 
				edge  (w2) edge  (w1) edge (4,1) edge (3,0)
				;
			\draw[-]	(w1) edge (3,4) edge (4,3);
	\draw[-]	(w2) edge (1,0) edge (0,1);
	\draw[->,orange!70!yellow,very thick] (1,0)--(4,3);
	\draw[->,orange!70!yellow,very thick](0,3)--(1,4);

\end{scope}	
\begin{scope}[scale=0.5,shift={(-7,-6)}]
\node[](no) at (2,-0.5) {$\gamma$};
    		\draw[dashed, gray] (0,0) rectangle (4,4);
	
		\coordinate[bvert] (w2) at (1,1);
			\coordinate[bvert] (w1) at (3,3);
			
			\coordinate[wvert] (b1) at (1,3);
			\coordinate[wvert] (b2) at (3,1);
			
			\draw[-]
				(b1) 
				edge  (w1) edge  (w2) edge (1,4) edge (0,3)
				;
			\draw[-] (b2) 
				edge  (w2) edge  (w1) edge (4,1) edge (3,0)
				;
			\draw[-]	(w1) edge (3,4) edge (4,3);
	\draw[-]	(w2) edge (1,0) edge (0,1);
	\draw[->,green,very thick] (1,4)--(4,1);
	\draw[->,green,very thick](0,1)--(1,0);
\end{scope}

	\end{tikzpicture}

\end{center}
	\caption{Zig-zag paths and Newton polygon {for the bipartite graph in Figure~\ref{figgzgw}.}}
	\label{fig:npzz}
\end{figure}

\begin{example}\la{eg:zz}
Consider the fundamental domain for the square lattice shown in Figure \ref{figgzgw}, and let $\gamma_z,\gamma_w$ be cycles generating $H_1(\T,\Z)$ as shown there. We will write homology classes in $H_1(\T,\Z)$ in the basis $(\gamma_z,\gamma_w)$. There are four zig-zag paths labeled $\alpha,\beta,\gamma$ and $\delta$ with homology classes  $(-1,1),(-1,-1),(1,-1)$ and $(1,1)$ respectively (Figure  \ref{fig:npzz}), and therefore the Newton polygon is 
\[
\text{Convex-hull}\{(1,0),(0,1),(-1,0),(0,-1)\}.
\]
\end{example}

\subsection{The cluster variety assigned to a Newton polygon }

\begin{figure}
	\centering
		\begin{tikzpicture}[scale=0.5]
		\node[](no) at (0,0){$\longleftrightarrow$};
		\node[](no) at (0,-3){spider move};
		\begin{scope}[shift={(3,0)},rotate=0]
			\def\r{2};
			\coordinate[wvert] (n1) at (0:\r);
			\coordinate[wvert] (n2) at (0+90:\r);
			\coordinate[wvert] (n3) at (0+180:\r);
			\coordinate[wvert] (n4) at (0+270:\r);
			
			\coordinate[bvert] (b1) at (0:0.5*\r);
			\coordinate[bvert] (b2) at (0+180:0.5*\r);
			
			\draw[-]
			(n1) --  (b1) --  (n2)--(b2)--(n4) -- (b1)
			(b2)--(n3)
			;
			\coordinate[] (t1) at (15:\r);
			\coordinate[] (t2) at (120-45:\r);
			\coordinate[] (t3) at (150-45:\r);
			\coordinate[] (t4) at (210-45:\r);
			\coordinate[] (t5) at (240-45:\r);
			\coordinate[] (t6) at (300-45:\r);
			\coordinate[] (t7) at (330-45:\r);
			\coordinate[] (t8) at (30-45:\r);

		\end{scope}
		
		\begin{scope}[shift={(-3,0)},rotate=90]
			\def\r{2};
			\coordinate[wvert] (n1) at (0:\r);
			\coordinate[wvert] (n2) at (0+90:\r);
			\coordinate[wvert] (n3) at (0+180:\r);
			\coordinate[wvert] (n4) at (0+270:\r);
			\coordinate[bvert] (b1) at (0:0.5*\r);
			\coordinate[bvert] (b2) at (0+180:0.5*\r);
			
			\draw[-]
			(n1)--(b1)--(n2)--(b2)--(n4)--(b1)
			(b2)--(n3)
			;
			\coordinate[] (t1) at (15:\r);
			\coordinate[] (t2) at (120-45:\r);
			\coordinate[] (t3) at (150-45:\r);
			\coordinate[] (t4) at (210-45:\r);
			\coordinate[] (t5) at (240-45:\r);
			\coordinate[] (t6) at (300-45:\r);
			\coordinate[] (t7) at (330-45:\r);
			\coordinate[] (t8) at (30-45:\r);

		\end{scope}
	
	\begin{scope}[shift={(15,0)}]
\node[](no) at (0,0){$\longleftrightarrow$};
		\node[](no) at (0,-3){contraction-uncontraction move};
\begin{scope}[shift={(3,0)},rotate=0]
	\def\r{2};
	\coordinate[] (n1) at (0:\r);
	\coordinate[] (n2) at (30:\r);
	\coordinate[] (n3) at (-30:\r);
	\coordinate[] (n4) at (160:\r);
	\coordinate[] (n6) at (200:\r);
	
	\coordinate[wvert] (w1) at (0:0);

	\path[] (w1) edge (n1) edge (n2) edge (n3) 
	(w1) edge (n4) edge (n6);
	
\end{scope}

\begin{scope}[shift={(-3,0)}]
	\def\r{2};
	\coordinate[] (n1) at (0:\r);
	\coordinate[] (n2) at (30:\r);
	\coordinate[] (n3) at (-30:\r);
	\coordinate[] (n4) at (160:\r);
	\coordinate[] (n6) at (200:\r);
	
	\coordinate[wvert] (w1) at (0:0.5*\r);
	\coordinate[wvert] (w2) at (180:0.5*\r);
	\coordinate[bvert] (b1) at (0:0);
	
	\path[] (w1) edge (n1) edge (n2) edge (n3) edge (w2)
	(w2) edge (n4) edge (n6);
\end{scope}	
\end{scope}
	\end{tikzpicture}
	\caption{{The elementary transformations.}}\label{fig:et}
\end{figure}
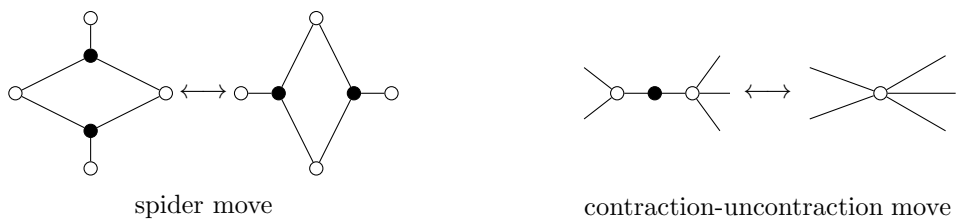

For a convex integral polygon $N \subset H_1(\T,\R)$ defined modulo translation, consider the family of minimal bipartite graphs $\Gamma$ with Newton polygon $N(\Gamma)=N$. Any two graphs $\Gamma_1, \Gamma_2$ in the family are related by certain \textit{elementary transformations}; {see Figure~\ref{fig:et}}. An elementary transformation $\Gamma_1 \to \Gamma_2$ gives rise to a birational map $H^1(\Gamma_1, \C^\times) \dashrightarrow H^1(\Gamma_2, \C^\times)$. Gluing the tori $H^1(\Gamma, \C^\times)$  by these maps, we obtain a space $\mathcal X_N$, called the \textit{dimer cluster Poisson  variety}. 
It carries a canonical Poisson structure. The Poisson center is generated by {the loop weights
of the zig-zag paths}. The space $\mathcal X_N$ is the phase space of the cluster 
integrable system. See details in \cite{GK12}.

\subsection{Some notation}
Let $\Sigma$ denote the normal fan of $N$ {(see Section~\ref{A2} and Figures~\ref{fig:hex} and \ref{fig:sqoct})} so that the set of rays $\Sigma(1)=\{\rho\}$ of $\Sigma$ is in bijection with the set of edges of $N$. We denote the edge of $N$ whose inward normal is directed along the ray $\rho$  by $E_\rho$, and the primitive vector along $\rho$ by $u_\rho$.

{Let ${\rm M}:=H_1(\T,\Z){\cong \Z^2}~\text{and}~{\rm M}^\vee:=\rm{Hom}_\Z(\rm M,\Z){\cong \Z^2}
$ be dual lattices and let $\langle *,* \rangle : {\rm M} \times {\rm M}^\vee \ra \Z$ denote the duality pairing}. Let us consider the algebraic torus with lattice of characters ${\rm M}$:
$$
{\mathrm T}:= \mathrm{Hom}_\Z({\rm M},\C^\times) \cong (\C^\times)^2.
$$
Let ${\rm M}_\R$ (resp. ${\rm M}^\vee_\R$) denote ${\rm M} \otimes_\Z \R$ (resp. ${\rm M}^\vee \otimes_\Z \R$), so that $N \subset {\rm M}_\R$ and $\Sigma \subset {\rm M}^\vee_\R$.

An elementary transformation $\Gamma_1 \ra \Gamma_2$ induces a canonical bijection between zig-zag paths in $\Gamma_1$ and zig-zag paths in $\Gamma_2$. Therefore, the set of zig-zag paths is canonically associated with $N$. We denote the set of zig-zag paths by $Z$, and for an edge $E_\rho$ of $N$, we denote  by $Z_\rho$ the set of zig-zag paths $\alpha$ such that the primitive vector $[\alpha]$ is contained in $E_\rho$.

\subsection{The Kasteleyn matrix}

Let $R$ be a fundamental rectangle for $\T$, so that $\T$ is obtained by gluing together opposite sides of $R$. Let $\gamma_z,\gamma_w$ be the oriented sides of $R$ generating $H_1(\T,\Z)$, as  shown in Figure \ref{figgzgw}.  Let $z$ (resp. $w$) denote the character $\chi^{\gamma_w}$ (resp. $\chi^{\gamma_z}$), so the coordinate ring of ${\mathrm T}$ is $\C[z^{\pm 1},w^{\pm 1}]$. 

Let $(*,*)_\T$ 
be the intersection pairing on $H_1(\T,\Z)$. 
For $z,w\in\C^\times$ we multiply edge weights on edges crossing $\gamma_z$ by $z^{\pm1}$ and
those crossing $\gamma_w$ by $w^{\pm1}$, with the sign determined by the orientation.
Precisely, we multiply by 
\be \la{edgeph}
\phi(e):=z^{(e,\gamma_w)_\T} w^{(e,-\gamma_z)_\T}, 
\ee 
Here 
$(e,*)_\T:=(l_e,*)_\T$ is the intersection index  with the oriented loop  $l_e$ obtained by concatenating $e = {\rm b}{\rm w}$ with an oriented path contained in $R$ from ${\rm w}$ to ${\rm b}$. 
\old{
\red{Let $H_1(\T,\Z)^\vee:=\text{Hom}_\Z(H_1(\T,\Z),\Z)$ be the dual lattice of $H_1(\T,\Z)$.}
There is an isomorphism $T:=H_1(\T,\Z)^\vee \otimes \C^\times \cong (\C^\times)^2$, defined as follows.   
For each edge $e$ of $\Gamma$, we associate a character, that is a group homomorphism $T \ra \C^\times$:
\be \la{edgeph}
\varphi(e)=z^{( e,\gamma_z )} w^{(e,\gamma_w)},
\ee
where $( e,\gamma_z )$ is the intersection index  of the edge $e$ and   $\gamma_z$, and similarly $( e,\gamma_w )$.  Explicitly we fix an embedding of $\Gamma$ in the fundamental rectangle. Isotoping edges if necessary, we may assume that each edge of $\Gamma$ intersects $\gamma_z$ and $\gamma_w$ only finitely many times. For an edge $e$, let $I_1, ..., I_n$ be the intersection points of $e$ with $\gamma_z$. We define $( e,\gamma_z ):=\sum_{i=j}^n (e,\gamma_z)_{I_j}$, where $(e,\gamma_z)_{I_j} \in \{-1,0,1\}$ is the local intersection index, where we orient $e$ from its black vertex to its white vertex.\\
}

A \emph{Kasteleyn sign} is a cohomology class $[\epsilon] \in H^1(\Gamma,\C^\times)$ such that for any loop $L$ in $\Gamma$, $[\epsilon]([L])$ is $-1$ (resp., $1$) if the number of edges in $L$ is $0$ mod $4$ (resp., $2$ mod $4$). Given edge weights $wt$ and $\epsilon$ representing $[wt]$ and $[\epsilon]$ respectively, one defines the {\it Kasteleyn matrix} $K {=K(z,w)}$, whose columns and rows are parameterized by $\bw\in B $ and $\w \in W$ respectively:
\begin{align}\label{Kastdet}
    K_{\w,\bw}&=\sum_{e \in E \text{ incident to } \bw,\w} wt(e) \epsilon(e)\phi(e).
\end{align}
It describes  a map of free   $\C[z^{\pm 1}, w^{\pm 1}]$-modules, called the {\it Kasteleyn  operator}:
\begin{align}\label{Kastdet1}
    K:~&\C[z^{\pm 1}, w^{\pm 1}]^{B} \ra  \C[z^{\pm 1}, w^{\pm 1}]^{W},\\
 &\delta_{\bw} \lms    \sum_{\w \in W} K_{\w, \bw} \delta_{\w}.
\end{align}

\begin{theorem}[Kasteleyn 1963, \cite{Kast63}]\label{Kastthm}
Fix a dimer cover $\dimer_0$, and let $\phi(\dimer_0)=\prod_{e \in \dimer_0} \phi(e)$. Then,
\[
\frac{1}{{ wt}(\dimer_0) \epsilon(\dimer_0) \phi(\dimer_0) } \det K= \sum_{\dimer \in {\mathscr M}} \mathrm{sign}([\dimer-\dimer_0]) [{wt}]([\dimer-\dimer_0]) \chi^{[\dimer-\dimer_0]},
\]
where $\mathrm{sign}({[\dimer-\dimer_0]}) \in \{\pm 1\}$ is a sign that depends only on the homology class $[\dimer-\dimer_0]$ and $[\epsilon]$.
\end{theorem}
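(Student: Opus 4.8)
The plan is the classical Kasteleyn determinant expansion, in the form adapted to the torus. A dimer cover exists only when $|W|=|B|$, so $K$ is square (and if $\mathscr{M}=\varnothing$ both sides vanish), and the Leibniz formula gives $\det K=\sum_{\sigma}\operatorname{sgn}(\sigma)\prod_{\w\in W}K_{\w,\sigma(\w)}$, summed over bijections $\sigma\colon W\to B$, where $\operatorname{sgn}$ is taken relative to a fixed ordering of the vertices. Expanding each entry $K_{\w,\sigma(\w)}$ as the sum \eqref{Kastdet} over the edges joining $\w$ and $\sigma(\w)$, each term of the fully expanded product–sum is indexed by the choice of one incident edge at each white vertex; since $\sigma$ is a bijection this choice is automatically a dimer cover $\dimer$, which conversely recovers $\sigma=\sigma_{\dimer}$. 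Collecting terms,
\[
\det K=\sum_{\dimer\in\mathscr{M}}\operatorname{sgn}(\sigma_{\dimer})\,wt(\dimer)\,\epsilon(\dimer)\,\phi(\dimer),\qquad wt(\dimer):=\prod_{e\in\dimer}wt(e),
\]
and likewise for $\epsilon(\dimer)$ and $\phi(\dimer)$.

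Now fix $\dimer_0$. The $1$-chain $\dimer-\dimer_0$ (edges of $\dimer$ with coefficient $+1$, of $\dimer_0$ with $-1$, each oriented from black to white) is a cycle whose class is $[\dimer-\dimer_0]$; evaluating the cocycle $wt$ representing $[wt]$ on it gives $wt(\dimer)/wt(\dimer_0)=[wt]([\dimer-\dimer_0])$, and by the defining property of the edge phases $\phi$ one has $\phi(\dimer)/\phi(\dimer_0)=\chi^{[\dimer-\dimer_0]}$. Dividing the formula for $\det K$ by $wt(\dimer_0)\epsilon(\dimer_0)\phi(\dimer_0)$ therefore yields
\[
\frac{\det K}{wt(\dimer_0)\,\epsilon(\dimer_0)\,\phi(\dimer_0)}=\sum_{\dimer\in\mathscr{M}}\operatorname{sgn}(\sigma_{\dimer})\,\frac{\epsilon(\dimer)}{\epsilon(\dimer_0)}\,[wt]([\dimer-\dimer_0])\,\chi^{[\dimer-\dimer_0]}.
\]
Setting $\mathrm{sign}([\dimer-\dimer_0]):=\operatorname{sgn}(\sigma_{\dimer})\,\epsilon(\dimer)/\epsilon(\dimer_0)\in\{\pm1\}$, this is the asserted identity, provided $\mathrm{sign}(\cdot)$ depends only on the class $[\dimer-\dimer_0]$ (the dependence on $[\epsilon]$, and on the fixed $\dimer_0$ and vertex ordering, being automatic).

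To prove this well-definedness, suppose $[\dimer-\dimer_0]=[\dimer'-\dimer_0]$; we must show $\operatorname{sgn}(\sigma_{\dimer})\epsilon(\dimer)=\operatorname{sgn}(\sigma_{\dimer'})\epsilon(\dimer')$. The superposition $\dimer\cup\dimer'$ consists of doubled edges together with pairwise disjoint embedded cycles $D_1,\dots,D_l$ on $\T$ alternating between $\dimer$- and $\dimer'$-edges, with $\sum_i[D_i]=0$ in $H_1(\T,\Z)$. Flipping $\dimer'$ to $\dimer$ along one $D_i$ composes $\sigma$ with a cyclic permutation of the $|D_i|/2$ white vertices of $D_i$, hence multiplies $\operatorname{sgn}(\sigma)$ by $(-1)^{|D_i|/2-1}$, and multiplies $\epsilon(\cdot)$ by the alternating product $[\epsilon]([D_i])$; so it suffices to show $\prod_i(-1)^{|D_i|/2-1}[\epsilon]([D_i])=1$. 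If $D_i$ is contractible it bounds a disk $A$ which is a union of faces of $\Gamma$, so $[\epsilon]([D_i])=\prod_{f\subset A}[\epsilon]([\partial f])=\prod_{f\subset A}(-1)^{|\partial f|/2+1}$ by the Kasteleyn property for the (contractible) face boundaries; combining this with Euler's formula for $A$ and with the fact that the number of vertices of $\Gamma$ strictly inside $A$ is \emph{even} — those vertices being $\dimer'$-matched among themselves, since their $\dimer'$-partners lie neither on $D_i$ (whose vertices are $\dimer'$-matched within $D_i$) nor, by embeddedness of $\Gamma$, outside $A$ — a short parity count gives $(-1)^{|D_i|/2-1}[\epsilon]([D_i])=1$. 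The non-contractible $D_i$ are disjoint essential simple closed curves on the torus, hence mutually parallel; since their classes sum to $0$ their number is even, and consecutive ones (in the cyclic order they determine) group into pairs $(D,D')$, each cobounding an annular union of faces of $\Gamma$, for which the same computation (now with Euler characteristic $0$ and again an even number of enclosed vertices, matched by $\dimer'$) gives $(-1)^{|D|/2-1}[\epsilon]([D])\,(-1)^{|D'|/2-1}[\epsilon]([D'])=1$. Hence the product is $1$, $\mathrm{sign}(\cdot)$ is well-defined, and the theorem follows.

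The Leibniz expansion and the monomial bookkeeping are routine; the real work — and the main obstacle — is the well-definedness of $\mathrm{sign}$, i.e.\ the per-cycle local computations above. Their essential ingredients are the Kasteleyn property on face boundaries, Euler's formula for a face-tiled disk or annulus, and the key fact (the very reason Kasteleyn's matrix counts matchings) that the vertices enclosed by a superposition cycle are matched among themselves by either cover and hence even in number. If instead one reads the hypothesis on $[\epsilon]$ literally as holding for \emph{all} loops, then $[\epsilon]([D_i])=(-1)^{|D_i|/2+1}$ outright, each factor above is immediately $(-1)^{|D_i|}=1$, and $\mathrm{sign}(\cdot)$ is the constant $\operatorname{sgn}(\sigma_{\dimer_0})$; the face-boundary version used here is the one valid for every bipartite graph on the torus.
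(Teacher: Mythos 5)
Your proof is correct, and there is nothing in the paper to compare it against: the paper states this result as a citation to Kasteleyn's 1963 work and gives no proof, so your argument stands on its own. What you give is the classical torus version of Kasteleyn's argument — Leibniz expansion matching terms with dimer covers, ratio bookkeeping turning $wt(\dimer)/wt(\dimer_0)$ and $\phi(\dimer)/\phi(\dimer_0)$ into the cohomology pairing and the character $\chi^{[\dimer-\dimer_0]}$, and then the superposition-cycle analysis for the well-definedness of the sign. I checked the parity counts you only sketch: for a contractible superposition cycle bounding a face-tiled disk, the exponent $|D_i|/2-1$ plus the exponent coming from $\prod_{f}(-1)^{|\partial f|/2+1}$ reduces, via Euler's formula, to the parity of the number of interior vertices, which is even because those vertices are matched among themselves by either cover; the annulus computation for a pair of consecutive parallel non-contractible cycles works the same way with Euler characteristic $0$, and the evenness of the number of non-contractible cycles follows as you say from their classes being $\pm$ a common primitive class summing to zero. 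Your closing remark is also the right call and worth keeping in mind: the paper's definition of the Kasteleyn sign, read literally as a condition on \emph{all} loops, is not satisfiable (and is violated by the paper's own square-lattice example, where the two-edge non-contractible loop $a$ has $\epsilon$-monodromy $-1$ rather than $+1$); the intended condition is the face-boundary one, which is exactly the hypothesis your proof uses and is what makes the sign genuinely depend on the homology class. The only cosmetic slip is the parenthetical about $\mathscr M=\varnothing$, where the normalization by $\dimer_0$ is undefined rather than both sides vanishing; the theorem presupposes a dimer cover exists, so this is harmless.
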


The  \textit{characteristic polynomial} is the Laurent polynomial 
\[
P(z,w):=\frac{1}{{wt}(\dimer_0) \epsilon(\dimer_0) \phi(\dimer_0) } \det K.\]
Its vanishing locus $\spectralcurve^\circ:=\{P(z,w)=0\} \subset (\C^\times)^2$ is called the \textit{(open part of the) spectral curve}. Theorem $\ref{Kastthm}$ implies that $N$ is the Newton polygon of $P(z,w)$.  
Although the definition of the Kasteleyn matrix uses cocycles representing the cohomology classes $wt$ and $\epsilon$, the spectral curve does not depend on these choices.
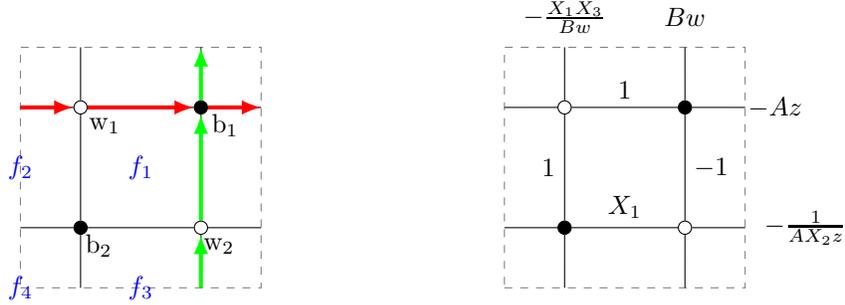
\begin{figure}
\centering
	\begin{tikzpicture}[scale=0.8]  
\clip (-0.5,-0.5) rectangle (6,6);
\def\lw{1.5};
		\draw[dashed, gray] (0,0) rectangle (4,4);
	
		\coordinate[bvert] (w2) at (1,1);
			\coordinate[bvert] (w1) at (3,3);
			
			\coordinate[wvert] (b1) at (1,3);
			\coordinate[wvert] (b2) at (3,1);
			
			\draw[-]
				(b1) 
				edge  (w1) edge  (w2) edge (1,4) edge (0,3)
				;
			\draw[-] (b2) 
				edge  (w2) edge  (w1) edge (4,1) edge (3,0)
				;
			\draw[-]	(w1) edge (3,4) edge (4,3);
	\draw[-]	(w2) edge (1,0) edge (0,1);
	\node[](no) at (1.4,2.7){$\w_1$};
	\node[](no) at (3.3,0.7){$\w_2$};
	\node[](no) at (3.4,2.7){$\bw_1$};
	\node[](no) at (1.3,0.7){$\bw_2$};
	
	\node[blue](no) at (2,2){$f_1$};
	\node[blue](no) at (0,2){$f_2$};
	\node[blue](no) at (2,0){$f_3$};
	\node[blue](no) at (0,0){$f_4$};
	\draw[->,green,line width = \lw] (3,0)--(b2);
	\draw[->,green,line width = \lw](b2)--(w1);
	\draw[->,green,line width = \lw]
	(w1)--(3,4);
	
		\draw[->,red,line width = \lw] (0,3)--(b1);
			\draw[->,red,line width = \lw] (b1)--(w1);
			\draw[->,red,line width = \lw] (w1)--(4,3);

	\end{tikzpicture}
	\hspace{10mm} 	\begin{tikzpicture}[scale=0.8]
		\clip (-0.5,-0.5) rectangle (6,6); 
		\draw[dashed, gray] (0,0) rectangle (4,4);
	
		\coordinate[bvert] (w2) at (1,1);
			\coordinate[bvert] (w1) at (3,3);
			
			\coordinate[wvert] (b1) at (1,3);
			\coordinate[wvert] (b2) at (3,1);
			
			\draw[-]
				(b1) 
				edge node[above] {$1$} (w1) edge node[left] {$1$} (w2) edge (1,4) edge (0,3)
				;
			\draw[-] (b2) 
				edge node[above] {$X_1$} (w2) edge node[right] {${-}1$} (w1) edge (4,1) edge (3,0)
				;
			\draw[-]	(w1) edge (3,4) edge (4,3);
	\draw[-]	(w2) edge (1,0) edge (0,1);
\node (no) at (1,4.5) {${-}\frac{X_1 X_3}{B{w}}$};
\node (no) at (3,4.5) {$B{w}$};
\node (no) at (5,1) {${-}\frac{1}{ A X_2 {z}}$};
\node (no) at (4.5,3) {$-{A {z}}$};

	\end{tikzpicture}
\caption{Shown on the {left} is a labeling of vertices and faces of $\Gamma$, and two cycles $a$ (red) and $b$ (green) in $\Gamma$ that generate $H_1(\T,\Z)$. Shown on the right is a cocycle representing $[wt]$, along with $\epsilon$ and $\phi$. {The signs are due to $\epsilon$, the $z,w$ due to $\phi$, and other weights are $wt$.} }
\label{figds}
\end{figure}
\begin{example}
Let $a$ and $b$ be the two cycles in $\Gamma$ shown on the {left} of Figure \ref{figds} whose projections to $\T$ generate $H_1(\T,\Z)$. Let $[wt] \in H^1(\Gamma,\C^\times)$ and let $A:=[wt]([a]), B:=[wt]([b])$. For $i=1,2,3$, let $X_i$ denote the $[wt]([\partial f_i])$, where $\partial f_i$ denotes the boundary of the face $f_i$ (the weight of the fourth face is determined by the fact that the product of all face weights is $1$).  Then $(X_1,X_2,X_3,A,B)$ generate the coordinate ring of $H^1(\Gamma,\C^\times)$. A cocycle representing $[wt]$ is shown on the {right} of Figure \ref{figds}, along with $\epsilon$ and $\phi$. The Kasteleyn matrix and the spectral curve are:
\begin{align}
K&=\begin{blockarray}{ccc}
{\bw_1}& \bw_2 \\
\begin{block}{(cc)c}
  1-A z & 1-\frac{X_1 X_3}{B w} & \w_1\\
  -1+Bw & X_1-\frac{1}{A X_2 z} & \w_2\\
\end{block}
\end{blockarray},\nonumber \\
P(z,w)&=\left(1  + X_1 + \frac{1}{X_2} + X_1 X_3\right)- B w - \frac{X_1 X_3}{B w} - \frac{1}{A X_2 z} - A X_1 z. \label{spectralcurve2}
\end{align}
\end{example}

\subsection{The toric surface assigned to a Newton polygon} \la{sec:toricvariety}

In this section, we collect some notation regarding toric varieties, and refer the reader to the {Appendices \ref{toricv} and \ref{A2}} for more details. A convex integral polygon $N \subset {\rm M}_\R$ determines a compactification $X_N$ of the complex torus ${\rm T}$ called a {\it toric surface}, and  a  divisor $D_N$ supported on the boundary $X_N- {\rm T}$,  
 so that Laurent polynomials with Newton polygon $N$ extend naturally to sections of the  coherent sheaf $\mathcal O_{X_N}(D_N)$ {(for background on the coherent sheaf associated to a divisor, see for example \cite{CLS}*{Chapter 4}).}  

Denote by $|D_N|$ the projective 
 space of  non-zero global sections of the  coherent sheaf $\mathcal O_{X_N}(D_N)$, considered modulo a multiplicative constant. Assigning to a section its vanishing locus, we {see elements} of 
$|D_N|$ {as} curves in $X_N$ whose restrictions to ${\rm T}$ are defined by Laurent polynomials with Newton polygon contained in $N$. 

The genus $g$ of the  generic curve in $|D_N|$ is equal to the number of interior lattice points in $N$. {Recall that the edges $\{E_\rho\}$ of $N$ are in bijection with the rays $\{\rho\}$ of $\Sigma$}. Each edge $E_\rho$ of $N$ determines a projective line $D_\rho$ {which we call a \textit{line at infinity}} of $X_N$, and
$$
X_N - {\rm T} = \bigcup_{\rho \in \Sigma(1)}D_\rho.
$$
The divisor $D_N$ is given by 
\be \la{DN}
D_N = \sum_{\rho \in \Sigma(1)} a_\rho D_\rho,
\ee
  where $a_\rho \in \Z$ are such that 
\be \la{eq:Npoly}
N=\bigcap_{\rho \in \Sigma(1)}\{m \in {\rm M}_\R: \langle m , u_\rho \rangle \geq -a_\rho\}.
\ee
 The lines $D_\rho$ intersect according to the combinatorics of $N$: {precisely, for $\rho_1,\rho_2 \in \Sigma(1)$, the intersection $D_{\rho_1} \cap D_{\rho_2}$ is empty if $E_{\rho_1} \cap E_{\rho_2}$ is empty and a point if $E_{\rho_1} \cap E_{\rho_2}$ is a vertex of $N$.} The intersection index of a  generic curve in $|D_N|$ with the line $D_\rho$   is equal to  the number $|E_\rho|$ 
 of primitive integral vectors in the edge $E_\rho$. The points of intersection are called \textit{points at infinity}.
 Let $\spectralcurve \in |D_N|$ denote the compactification of the open spectral curve $\spectralcurve^\circ$, i.e., $\spectralcurve$ is the closure of $\spectralcurve^\circ$ in $X_N$. $\spectralcurve$ is called the \textit{spectral curve}.
 
 \subsection{Casimirs}\la{sec:cas}
 Let $\alpha$ be a zig-zag path $\alpha= \bw_1 \ra \w_1 \ra \bw_2 \ra \cdots \ra \w_d \ra \bw_1$ in $Z_\rho$. We define the \emph{Casimir} $C_\alpha$ by 
\[
C_\alpha:=(-1)^d [\epsilon]([\alpha]) [wt]([\alpha]).
\]

 The Casimirs determine points at infinity of $\spectralcurve$ as follows: since $[\alpha]$ is primitive and 
 $\langle u_\rho,[\alpha] \rangle=0$, we can extend it to a basis $(x_1,x_2)$ of $\rm M$ with $[\alpha]=x_1$ and $\langle x_2,u_\rho \rangle =1$. The affine open variety in $X_N$ corresponding to the cone $\rho$ is 
 \[
 U_\rho=\spec \C[x_1^{\pm 1 },x_2] \cong \C^\times \times \C,
 \]
 and $D_\rho \cap U_\rho$ is defined by $x_2=0$, and so the character $x_1^{-1}=\chi^{-[\alpha]}$ is a coordinate on the dense open torus $\C^\times=D_\rho \cap U_\rho$ in $D_\rho$. Therefore, the equation
\be \la{eq:casca}
\chi^{-[\alpha]} (\nu_\rho(\alpha))= C_\alpha,
\ee
defines a point $\nu_\rho(\alpha)$ in $D_{\rho}$. In other words, the point is defined as the unique point on the line at infinity such that the monomial $z^iw^j$, where $-[\alpha]=(i,j)$, evaluates to $C_\alpha$. We will prove later (see (\ref{caspoint})) that these are precisely the points at infinity of $\spectralcurve$.  

\begin{example}
Consider the fundamental domain of the square lattice, whose zig-zag paths were listed in Example \ref{eg:zz} and Figure \ref{fig:npzz}. The Casimirs are
\begin{align}\la{cassq}
    C_\alpha  = -\frac B {A X_1}, \ \ \ 
    C_\beta = -\frac{1}{A B X_2 }, \ \ \ 
    C_\gamma  = -\frac{A X_1X_2X_3}{B}, \ \ \ 
    C_\delta = -\frac{AB}{X_3}.\end{align}
    Let us denote the normal ray in $\Sigma$ of a zig-zag path $\omega$ by $\rho(\omega)$, so $u_{\rho(\alpha)}=(-1,-1)$ etc. We choose $x_2=\chi^{(0,-1)}$ so that $\langle (0,-1), u_{\rho(\alpha)} \rangle$=1. Then we have $U_{\rho(\alpha)}=\spec\C[x_1=z^{-1}w,x_2=w^{-1}]$ and $D_{\rho(\alpha)} \subset U_{\rho(\alpha)}$ is given by $x_2=0$. 
In this case, $D_N=D_{\rho(\alpha)}+D_{\rho(\beta)}+D_{\rho(\gamma)}+D_{\rho(\delta)}$ and $P(z,w)$ is a global section of $\mathcal O_{X_N}(D_N)$. We trivialize $\mathcal O_{X_N}(D_N)$ over $U_{\rho(\alpha)}$ as follows:
\begin{align*}
    \restr{\mathcal O_{X_N}(D_N)}{U_{\rho(\alpha)}}=\{t \in \C[z^{\pm 1},w^{\pm 1}]: \restr{\div~t}{U_{\rho(\alpha)}}+D_{\rho(\alpha)} \geq 0\} &\cong \mathcal O_{U_{\rho(\alpha)}}\\
    t &\mapsto {tx_2}
\end{align*}
Then making the change of variables $z=\frac{1}{x_1x_2}$ and $w=\frac{1}{x_2}$, and multiplying by $x_2$, the portion of the spectral curve $\spectralcurve$ in $U_\rho$ is cut out by
\[
\left(1  + X_1 + \frac{1}{X_2} + X_1 X_3\right)x_2- {B}  - \frac{X_1 X_3}{B }x_2^2 - \frac{x_1x_2^2}{A X_2 } -   \frac{A X_1}{x_1},
\]
so that $\spectralcurve \cap D_{\rho(\alpha)}$ is given by 
\[
-B-   \frac{A X_1}{x_1}=0.
\]
Therefore, $\nu(\alpha)$ is given by $\frac{z}{w}=\frac{1}{x_1}=C_\alpha$, which agrees with (\ref{eq:casca}). The table below lists the points at infinity for each of the zig-zag paths.
{
\be \def\arraystretch{1.5}
\begin{array}{|cccc|}
\hline
 \text{Zig-zag path} & \text{Homology class} & \text{Basis $x_1,x_2$} & \text{Point at infinity}\\
 \hline
\alpha & (-1,1) & (-1,1),(0,-1) &x_1=\frac{1}{C_\alpha},x_2=0  \\ [0.5ex]
\hline
\beta & (-1,-1) & (-1,-1),(0,-1) &x_1=\frac{1}{C_\beta},x_2=0 \\[0.5ex]
\hline
\gamma & (1,-1)& (1,-1),(0,1)&x_1=\frac{1}{C_\gamma},x_2=0  \\[0.5ex]
\hline
\delta & (1,1) & (1,1),(0,1)&x_1=\frac{1}{C_\delta},x_2=0  \\[0.5ex]
\hline
\end{array}\la{zzpathtable}
\ee
}
\end{example}

\subsection{The spectral transform} 

Our next goal is to define the spectral transform, which plays the key role in this paper. We present two equivalent definitions of the spectral transform. The first is the original definition of Kenyon and Okounkov \cite{KO}, and it is the one which we use in computations. However, it depends on the choice of the distinguished white vertex ${\bf w}$. The second is  more  invariant, and does not require choosing a  distinguished {white} vertex ${\bf w}$.

Recall that {for each edge $E_\rho$ of $N$, we have} $\# Z_\rho = \# \spectralcurve \cap D_\rho$, but there is no canonical bijection between these sets. 
We define a \textit{parameterization of the points at infinity by zig-zag paths} to be a {choice} of bijections $\nu=\{\nu_\rho\}_{\rho \in \Sigma(1)}$, where 
\be \la{ZA}
\nu_\rho : Z_\rho \xrightarrow[]{\sim} \spectralcurve\cap D_\rho.
\ee

For a curve $\spectralcurve \in|D_N|$, we denote by $\operatorname{Div}_\infty(\spectralcurve)$ the abelian group of divisors on $\spectralcurve$ supported at the \textit{infinity}, that is at  $\spectralcurve \cap D_N$.

Compactifications of the Kasteleyn operator will play a important role in this paper.
 The main ingredient in the construction of these compactifications is a combinatorial object called the \textit{discrete Abel map} introduced by Fock \cite{Fock} that encodes intersections with zig-zag paths. Let $\Gamma$ be a minimal bipartite graph in $\T$ with Newton polygon $N$ and spectral curve $\spectralcurve$. The {discrete Abel map}    
\begin{align*}
{\bf d}:B \cup W \cup F  &\ra \operatorname{Div}_\infty(\spectralcurve)
\end{align*}
assigns to each vertex and face of $\Gamma$ a divisor at infinity. 
It is defined uniquely up to a constant by the requirement that for a path $\gamma$ from $x$ to $y$, contained in  the fundamental domain $R$,  where $x$ and $y$ are either vertices or faces of $\Gamma$, we have
\begin{align*}
{\bf d}(y)-{\bf d}(x)&=\sum_{\rho \in \Sigma(1)}\sum_{\alpha \in Z_\rho} (\alpha,\gamma)_R \nu_\rho(\alpha).
\end{align*}
Here $(\alpha,\gamma)_R$ is the intersection index in $R$, i.e., the signed number of intersections of $\alpha$ with $\gamma$. Since we require $\gamma$ to be contained in $R$, this is well-defined, independent of the choice of path $\gamma$. Locally, the rule is as follows: 
\begin{enumerate}
    \item If $\bw$ is a black vertex incident to a face $f$, and $\bw$ and $f$ are separated by $\alpha \in Z_\rho$, then ${\bf d}(\bw)={\bf d}(f)+\nu_\rho(\alpha)$.
    \item If $\w$ is a white vertex incident to a face $f$, and $\w$ and $f$ are separated by $\alpha \in Z_\rho$, then ${\bf d}(\w)={\bf d}(f)-\nu_\rho(\alpha)$.
\end{enumerate}

We normalize ${\bf d}$, setting the value of ${\bf d}$ at certain face $f_0$ of $\Gamma$ to be $0$. Then for any black vertex $\bw$, face $f$, and white vertex $\w$ of $\widetilde \Gamma$ we have: 
\be
{\rm deg} ~{\bf d}(\bw) =1, \ \ {\rm deg}~ {\bf d}({f}) =0, \ \ {\rm deg}~ {\bf d}(\w) =-1. 
\ee
\begin{remark}
 Only differences of the form ${\bf d}(y)-{\bf d}(x)$ will appear in our constructions later, so the choice of normalization does not play a role. 
\end{remark}
\begin{example}\la{eg:dam}
Let us compute the discrete Abel map ${\bf d}$ for the square lattice in Figure \ref{figds}. We normalize ${\bf d}(f_1)=0$. Then we have
\[
{\bf d}(\bw_1)=\nu_{\rho(\gamma)}(\gamma), \quad {\bf d}(\bw_2)=\nu_{\rho(\alpha)}(\alpha),\quad {\bf d}(\w_1)=-\nu_{\rho(\beta)}(\beta),\quad {\bf d}(\w_2)=-\nu_{\rho(\delta)}(\delta),
\]
where $\nu$ is shown in table (\ref{zzpathtable}).
\end{example}
\vskip 2mm
{\bf Definition 1.}  A \textit{divisor spectral data} related to a Newton polygon $N$ is a 
triple $(\spectralcurve,S,\nu)$ where $\spectralcurve \in |D_N|$ is a genus $g$ curve on the toric surface $X_N$, $S$ is a degree $g$ effective divisor in $\spectralcurve^\circ$, and $\nu=\{\nu_\rho\}$ are parameterizations of the divisors  $D_\rho \cap \spectralcurve$, see (\ref{ZA}). Denote by  $\mathcal S_N$  the moduli space parameterizing the divisor spectral data on $N$. 
Let us  fix a {\it distinguished white vertex} ${\bf w}$ of $\Gamma$. Then there is a rational map (here {and in the sequel}, $\dashrightarrow$ means a rational map), called the \textit{spectral transform}, defined by Kenyon and Okounkov \cite{KO}, 
\begin{align} 
    \kappa_{\Gamma, {\bf w}}:  
      \mathcal X_N \dashrightarrow \mathcal S_N \la{SM}
\end{align}
defined on the dense open subset $H^1(\Gamma,\C^\times)$ of $\mathcal X_N$ by $[wt] \mapsto (\spectralcurve,S,\nu)$ as follows:
\begin{enumerate}
    \item $\spectralcurve$ is the spectral curve.
    \item For generic $[wt]$, $\spectralcurve$ is a smooth curve and $\text{coker } K$ is the pushforward of a line bundle on $\spectralcurve^\circ$. Let $s_{\bf w}$ be the section of $\text{coker } K$ given by the ${\bf w}$-entry of the cokernel map. $S$ is defined to be the divisor of this section. In Corollary \ref{cordeg}, we show that $S$ has degree $g$.   Then $S$ is the set of $g$ points in $\spectralcurve^\circ$ where the ${\bf w}$-column of the adjugate matrix $Q={Q(z,w)=}K^{-1} \text{det}K$ vanishes. 
    \item The parameterization of points at infinity by zig-zag paths $\nu$ is defined as follows:  $\nu_\rho(\alpha)$ is the point in $\spectralcurve\cap D_\rho$ satisfying $\chi^{-[\alpha]}= C_\alpha$ (see Section \ref{sec:cas}). We call $\nu_\rho(\alpha)$ the \textit{point at infinity associated to} $\alpha$. 
    \end{enumerate}     
\vskip 2mm 

{\bf Definition 2.}     
A \textit{line bundle spectral data} related to a Newton polygon $N$ is a 
triple $(\spectralcurve,{\cal L},\nu)$ where $\spectralcurve\in |D_N|$ is a genus $g$ curve on the toric surface $X_N$, ${\cal L}$ is a degree $g-1$ line bundle on $\spectralcurve$, and $\nu$ is a parameterization of points at infinity by zig-zag paths. Denote by  $\mathcal S'_N$  the moduli space parameterizing the line bundle spectral data on $N$. 

The spectral transform is a rational map 
\begin{align*}
	\kappa_{\Gamma, {\bf d}}:  
	\mathcal X_N & \dashrightarrow \mathcal S'_N
\end{align*}
defined on the dense open subset $H^1(\Gamma,\C^\times)$ of $\mathcal X_N$ by $[wt] \mapsto (\spectralcurve,\mathcal L,\nu)$, where:
\begin{enumerate}
	\item $\spectralcurve$ is the spectral curve.
	\item Let $\restr{K}{\spectralcurve^\circ}$ denote the restriction of the Kasteleyn matrix to $\spectralcurve^\circ$. The discrete Abel map  ${\bf d}$ determines an extension $\overline K$ of $\restr{K}{\spectralcurve^\circ}$ to a morphism of locally free sheaves on $\spectralcurve$; see Section~\ref{extensionsection}. The coherent sheaf $\mathcal L$ is   defined as the cokernel of $\overline K$. When $\spectralcurve$ is a smooth curve, which happens for generic $[wt]$, $\mathcal  L$ is a line bundle.  The convention deg ${\bf d}(\w)=-1$ implies that $\text{deg}~\mathcal L=g-1$; see  Proposition~\ref{degl}. 
	
	\item The parameterizations of the divisors $D_N \cap \spectralcurve$ are defined by associating to a zig-zag path $\alpha$ the point at infinity $\nu_\rho(\alpha)$.
\end{enumerate}

Since $\rho$ is determined by $\alpha$, we will use the simpler notation $\nu(\alpha){:=\nu_{\rho}(\alpha)}$ hereafter.  
 
The two types of spectral data are equivalent. Given  {a degree $g$ effective divisor} $S$, we have (Proposition~\ref{degl}) 
\be
\mathcal L \cong \mathcal O_{\spectralcurve}\left( S+{\bf d}({\bf w}) \right).
\ee
On the other hand, given a line bundle $\mathcal L$ and a white vertex ${\bf w}$, we can recover $S$ as follows. Consider the Abel-Jacobi map
\begin{align*}
A^g:\text{Sym}^g \spectralcurve &\ra \text{Jac}(\spectralcurve),\\
E &\mapsto \mathcal L \otimes \mathcal O_\spectralcurve(E + {\bf d}({\bf w})).
\end{align*} 
Then $A^g$ is birational by the Abel-Jacobi theorem \cite{Beau}*{Corollary~4.6}. We obtain $S=(A^g)^{-1}(\mathcal O_\spectralcurve)$.

\begin{example}
We compute the spectral transform for our running example of the square lattice. Let us take the distinguished white vertex to be ${\bf w}=\w_1$.
\begin{align}
Q&=\begin{blockarray}{ccc}
\w_1& \w_2 \\
\begin{block}{(cc)c}
 X_1-\frac{1}{A X_2 z}  & -1+\frac{X_1 X_3}{B w} & \bw_1\\
  1-Bw & 1-A z & \bw_2\\
\end{block}
\end{blockarray}.\la{qbwds}
\end{align}
Solving $Q_{{\bw_1} {\bf w}}{(p,q)}=Q_{{\bw_2} {\bf w}}{(p,q)}=0,$ we get 
\be \la{pqds}
p=\frac{1}{A X_1 X_2}, \quad  q=\frac 1 B.
\ee
Therefore, the spectral transform is:
\begin{align*}
\kappa_{\Gamma,{\bf w}}:H^1(\Gamma,\C^\times) &\dashrightarrow \mathcal S_N\\
(X_1,X_2,X_3,A,B) &\mapsto (\spectralcurve,(p,q),\nu),
\end{align*}
where $\spectralcurve=\{P(z,w)=0\}$ with $P(z,w)$ as is in (\ref{spectralcurve2}), $S=(p,q)$ is a single point ({the genus $g=1$ since $N$ in Figure~\ref{fig:npzz} has one interior lattice point}) and $\nu$ is as shown in table (\ref{zzpathtable}).
\end{example}

\section{The main theorem} \la{sec2}

Below we introduce functions ${\rm V}_{\bw \w}$ on the moduli space $ {\cal S}_{N}$ of spectral data, relying on results in the remaining Sections \ref{smallpolysection}, \ref{extensionsection}, \ref{laurentsection}. They are defined for any pair {$(\bw,\w) \in B \times W$} of black and white vertices, and defined as the solution to a system of linear equations ${\mathbb V}_{\bw \w}$. 

The main result of the paper is the following.
\begin{theorem}\label{Vbwthm}
For the distinguished {white} vertex ${\bf w}$, the pull-back of the function ${\rm V}_{{\bw}{\bf w}}$ under the spectral map coincides, up to a  multiplicative  constant, with the $\bw {\bf w}$ matrix element 
 ${Q}_{\bw {\bf w}}$ of the adjugate matrix 
 ${Q} :=K^{-1}\det K $ of the Kasteleyn matrix $K$. 
 That is, 
 \be
 {Q}_{\bw {\bf w}} = c \cdot  \kappa_{\Gamma,{\bf w}}^*({\rm V}_{\bw {\bf w}}),
 \ee
 {where $c$ depends on $\bw$ (and ${\bf w}$).}
 \end{theorem}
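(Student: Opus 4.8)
The plan is to prove that $Q_{\bw{\bf w}}$, pulled back through the spectral map to a function on a dense open subset of $\mathcal S_N$, is a solution of the linear system $\mathbb V_{\bw{\bf w}}$ that defines ${\rm V}_{\bw{\bf w}}$, and that this system has a one-dimensional space of solutions for generic spectral data; the two functions then agree up to a scalar, and comparing normalizations shows the scalar depends only on $\bw$ (and ${\bf w}$). Concretely the proof has three ingredients: (i) $Q_{\bw{\bf w}}$ is a Laurent polynomial supported on the correct small polygon $\mathsf N_{\bw{\bf w}}$, whose lattice points are the unknowns of $\mathbb V_{\bw{\bf w}}$; (ii) it satisfies all the vanishing conditions imposed by $\mathbb V_{\bw{\bf w}}$; and (iii) the solution space of $\mathbb V_{\bw{\bf w}}$ is exactly one-dimensional.

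For (i) I would invoke the extension of the Kasteleyn operator to a morphism $\overline K$ of locally free sheaves on the toric stack of $N$ built from the discrete Abel map ${\bf d}$ (Section~\ref{extensionsection}, using the line-bundle classification and cohomology of \cite{BH09}). The classical adjugate extends to the adjugate of $\overline K$, whose $(\bw,{\bf w})$ entry is a global section of an explicit line bundle on the stack; reading off the associated polytope shows that $Q_{\bw{\bf w}}$ has Newton polygon contained in $\mathsf N_{\bw{\bf w}}$, the ``small polygon'' of Section~\ref{smallpolysection}, cut out by $N$ together with the divisors ${\bf d}(\bw)$ and ${\bf d}({\bf w})$. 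For (ii), the conditions $Q_{\bw{\bf w}}(p_i,q_i)=0$, $i=1,\dots,g$, are immediate: on $\spectralcurve^\circ$ one has $\det K=0$, hence $KQ=QK=0$, so the entire ${\bf w}$-column of $Q$ lies in $\ker K$ and therefore vanishes precisely on $S$ by the definition of $S$ in the spectral transform. The conditions at infinity come from restricting $\overline K$ to each line $D_\rho$: there the morphism degenerates in a way governed by the Casimirs $C_\alpha$ (equivalently by the parameterization $\nu$), and propagating this through the adjugate forces $Q_{\bw{\bf w}}$ to vanish at the points $\nu(\alpha)$ dictated by how the zig-zag paths in $Z_\rho$ sit relative to ${\bf d}(\bw)$ and ${\bf d}({\bf w})$ (Section~\ref{laurentsection}). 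These are precisely the remaining equations of $\mathbb V_{\bw{\bf w}}$, so $Q_{\bw{\bf w}}$ is a solution.

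For (iii) I would run a lattice-point / Riemann--Roch count on $\spectralcurve\subset X_N$: using that $\spectralcurve$ is smooth of genus $g$, that $S$ is a reduced generic effective degree-$g$ divisor for generic weights, and the identification $\mathcal L\cong\mathcal O_\spectralcurve(S+{\bf d}({\bf w}))$ of Proposition~\ref{degl}, one checks that the number of lattice points of $\mathsf N_{\bw{\bf w}}$ exceeds by exactly one the total number of vanishing conditions imposed (the $g$ points of $S$ together with the prescribed points at infinity). Hence $\mathbb V_{\bw{\bf w}}$ has a one-dimensional solution space generically, necessarily spanned by $Q_{\bw{\bf w}}$, giving $Q_{\bw{\bf w}}=c\cdot\kappa_{\Gamma,{\bf w}}^*({\rm V}_{\bw{\bf w}})$ on a dense open set, and hence as an identity of rational functions. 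That $c$ depends only on $\bw$ and ${\bf w}$, not on the spectral data, follows because $Q_{\bw{\bf w}}$ is polynomial in the entries of $K$ and ${\rm V}_{\bw{\bf w}}$ is normalized by a determinantal (Cramer-type) rule of matching multidegree, so their ratio is a nowhere-vanishing regular function on the moduli space, pinned down to a scalar by inspecting a single coefficient.

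The main obstacle is the boundary analysis underlying (ii) and (iii): proving that the adjugate of the stack extension $\overline K$ vanishes at exactly the set of points $\nu(\alpha)$ at infinity prescribed by ${\bf d}$ and at no others, and that the expected lattice-point count is actually attained. This needs the cohomology computations of \cite{BH09} applied to the specific line bundles built from the discrete Abel map, plus genericity input (smoothness of $\spectralcurve$, $Q_{\bw{\bf w}}\not\equiv 0$, reducedness of $S$) guaranteeing that the relevant higher cohomology vanishes so that all dimensions come out as predicted.
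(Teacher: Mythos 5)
Your steps (i) and (ii) track the paper's own route: the stack extension $\widetilde K$ and Corollary~\ref{Cor3.5} give the small polygon, vanishing on $S$ is by definition of the spectral transform, and the equations at infinity come from the divisor computation for $\overline K$ on $\spectralcurve$ (Lemma~\ref{divbw}, Corollary~\ref{divQbw}, Propositions~\ref{lemextraeq} and \ref{caseqn}); the paper also invokes dominance of $\kappa_{\Gamma,{\bf w}}$ (\cite{GK12}, Theorem 7.3) so that generic spectral data is realized by weights, which you should make explicit.

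The genuine gap is in your step (iii). Your claim that the number of lattice points of $N_{\bw {\bf w}}$ exceeds the number of imposed conditions by exactly one is false in general: the number of rows of $\mathbb V_{\bw{\bf w}}$ equals $h^0(\spectralcurve,\restr{\floor{\divebw_{\bw{\bf w}}}}{\spectralcurve})-1$, which can strictly exceed the number of columns minus one, $h^0(X_N,\floor{\divebw_{\bw{\bf w}}})-1$, and moreover the Casimir rows need not be linearly independent; so one-dimensionality cannot be obtained from a row/column count plus genericity of the rows. The paper's uniqueness argument instead works on the curve and then lifts back: any solution $V$ restricts on $\spectralcurve$ to a section of $\mathcal O_\spectralcurve(-D)$, where $D=-\restr{D_N}{\spectralcurve}+{\bf d}({\bf w})-{\bf d}(\bw)+\sum_{\alpha\in Z}\nu(\alpha)$ has degree $-2g$; Riemann--Roch together with Serre duality (since $\omega_\spectralcurve(D)$ has degree $-2$) gives $h^0(\spectralcurve,\mathcal O_\spectralcurve(-D))=g+1$, and a generic degree-$g$ divisor $S$ avoiding the base locus imposes $g$ independent conditions, pinning down $\restr{V}{\spectralcurve}$ up to scale. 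Crucially, one then needs the injectivity of the restriction map $H^0(X_N,\floor{\divebw_{\bw{\bf w}}})\to H^0(\spectralcurve,\restr{\floor{\divebw_{\bw{\bf w}}}}{\spectralcurve})$, i.e.\ Lemma~\ref{cohlem}, proved via the combinatorial vanishing $H^0(X_N,\floor{\divebw_{\bw{\bf w}}}-D_N)=0$ using zig-zag intersection counts; your sketch omits this entirely, and without it a Laurent polynomial supported on $N_{\bw{\bf w}}$ vanishing identically on $\spectralcurve$ could be added to $Q_{\bw{\bf w}}$, destroying one-dimensionality of the solution space. (Your closing claim that $c$ is independent of the spectral data is neither needed -- ${\rm V}_{\bw{\bf w}}$ is itself only defined up to a scalar -- nor justified as stated.)
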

 
As an application of this result, we get an explicit description of the inverse to the spectral map (\ref{SM}); see Section~\ref{Sec2.3}. 

The next few sections discuss the structure of the system of linear equations $\mathbb V_{\bw \w}$. 
Detailed examples are given in Section~\ref{sec:example}. 

\subsection{The matrix \texorpdfstring{$\mathbb V_{\bw \w}$}{Vbw}} 

The system of linear equations $\mathbb V_{\bw \w}$ is in the variables $ (a_m)_{m \in N_{\bw \w } \cap {\rm M}}$ where $N_{\bw \w} \subset {\rm M}_\R$ is a convex polygon, introduced in Section~\ref{S2.1.1}, and called the \textit{small Newton polygon}. {There is one system for every pair $(\bw,\w) \in B \times W$}. {The system $\mathbb V_{\bw \w}$ is of the form $(\text{matrix}) (a_m)=0$; we also denote this matrix by $\mathbb V_{\bw \w}$.} Therefore, the columns of the matrix $\mathbb{V}_{\bw \w}$ are indexed by the lattice points $N_{\bw \w } \cap {\rm M}$. By Corollary~\ref{Cor3.5},  
the polygon $N_{\bw \w}$ is the Newton polygon of the Laurent polynomial ${Q}_{\bw \w}$.

The equations in $\mathbb V_{\bw \w}$, i.e., the rows of the matrix $\mathbb V_{\bw \w}$ are defined in Section~\ref{S2.1.2}. There are two types:
\begin{enumerate}
     \item There is a row for each of the points $(p_1,q_1),\dots, (p_g,q_g)$ of the divisor $S$ on the spectral curve. The entry of the row in column $m \in N_{\bw \w } \cap {\rm M}$ is $\chi^m(p_i,q_i)$. 
    \item The remaining rows correspond to  certain zig-zag paths $\alpha$. The entries in the row corresponding to $\alpha$ are certain monomials in $C_\alpha$.  
 \end{enumerate}

Let us proceed to the precise definition of the matrix $\mathbb V_{\bw\w}$.

\subsubsection{Columns of the matrix \texorpdfstring{$\mathbb V_{\bw \w}$}{Vbw}} 

We now describe the small Newton polygons, whose lattice points correspond to columns of $\mathbb V_{\bw \w}$.

\paragraph{Rational Abel map \texorpdfstring{$\dd$}{dd}.}

{ {Recall the set $\{D_{\rho}\}$ of lines at infinity of the toric surface~$X_N$.}} Consider the $\Q$-vector space $\operatorname{Div}_{\rm T}^{\Q}(X_N)$ of \textit{$\Q$-divisors at infinity}, defined as the $\Q$-vector space with a basis given by the divisors $D_{\rho}$:
$$
\operatorname{Div}_{\rm T}^{\Q}(X_N) := \bigoplus_{\rho \in \Sigma(1)}\Q{D_\rho}. 
$$

 We define a \textit{rational Abel map}
\[
 {\dd}:V \ra \operatorname{Div}_{\rm T}^{\Q}(X_N)
\]
 which  assigns to each vertex $\rm v$  of 
the graph $\Gamma$ a $\Q$-divisor at infinity $\dd(\rm v)$ as follows:
 \begin{enumerate}
    \item  Normalize ${\dd}({\bf w})=0$. As in the case of ${\bf d}$, the choice of normalization plays no role, and we can replace $0$ with any $\Q$-divisor.
    \item For any path $\gamma$ contained in $R$ from ${\rm v}_1$ to ${\rm v}_2$,
\begin{align*}
{\dd}({\rm v}_2)-{\dd}({\rm v}_1)&=\sum_{\rho \in \Sigma(1)}\sum_{\alpha \in Z_\rho} {\frac{(\alpha,\gamma)_R}{|E_\rho|}} D_\rho, 
\end{align*}
where $(\cdot,\cdot)_R$ is the intersection index in $R$, i.e., the signed number of intersections of $\alpha$ with~$\gamma$. 
\end{enumerate}
The following lemma follows from definitions.
\begin{lemma}\la{lem:DAM}
Let $\alpha, \beta$ be the zig-zag paths through ${e=\bw \w}$, with $\alpha \in Z_\sigma, \beta \in Z_\rho$. Then, we have 
\be \la{DAM}
{\dd}(\w)-\dd(\bw)=-\frac{1}{|E_{{\sigma}}|}D_{{\sigma}}-\frac{1}{|E_{{\rho}}|}D_{{\rho}}-\div  \phi(e) 
\ee {where $\phi(e)$ is {the character} defined in (\ref{edgeph}) {and $\div \phi(e)$ denotes its (Weil) divisor as in (\ref{divchar}).}}
\end{lemma}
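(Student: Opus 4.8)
\textbf{Proof plan for Lemma \ref{lem:DAM}.}

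The plan is to reduce the statement to a purely local computation around the edge $e = \bw\w$ using the two defining properties of the rational Abel map $\dd$. First I would recall that $\dd$ is determined (up to a global normalization) by the rule $\dd({\rm v}_2) - \dd({\rm v}_1) = \sum_{\rho}\sum_{\alpha \in Z_\rho} \frac{(\alpha,\gamma)_R}{|E_\rho|} D_\rho$ for any path $\gamma \subset R$ from ${\rm v}_1$ to ${\rm v}_2$. Taking ${\rm v}_1 = \bw$, ${\rm v}_2 = \w$, and a short path $\gamma$ that runs directly along $e$ from $\bw$ to $\w$ inside $R$, the only zig-zag paths contributing to the intersection number $(\alpha,\gamma)_R$ are those that cross the edge $e$. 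By the medial-graph description of zig-zag paths, exactly two zig-zag paths cross each edge $e$, namely the two paths $\alpha \in Z_\sigma$ and $\beta \in Z_\rho$ through $e$, and each crosses $\gamma$ exactly once. So the sum collapses to $\dd(\w) - \dd(\bw) = \pm \frac{1}{|E_\sigma|}D_\sigma \pm \frac{1}{|E_\rho|}D_\rho$, and the main content is to check that both signs are $-1$, i.e., both zig-zag paths cross $\gamma$ (oriented from $\bw$ to $\w$) with intersection index $-1$. This should follow from the orientation conventions: zig-zag paths turn maximally right at black vertices and maximally left at white vertices, and edges are oriented from black to white; comparing with the convention in the definition of the discrete Abel map ${\bf d}$ (where rule (1)–(2) gives $\pm\nu_\rho(\alpha)$ with the signs already fixed) pins down the signs. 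Concretely I would note that $\dd$ is the ``rational'' or ``averaged'' analog of ${\bf d}$: if we replaced each $\nu_\rho(\alpha)$ in the definition of ${\bf d}$ by $\frac{1}{|E_\rho|}D_\rho$ we would get exactly $\dd$, so the sign pattern in Lemma \ref{lem:DAM} matches the sign pattern $-\nu_\sigma(\alpha) - \nu_\rho(\beta)$ obtained by adding the local rules (1) and (2) around $e$.

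Next I would account for the extra term $-\div \phi(e)$. The point is that the definition of $\dd$ via the intersection formula only tracks how many times zig-zag paths are crossed, but does not see the monomial $\phi(e) = z^{(e,\gamma_w)_\T} w^{(e,-\gamma_z)_\T}$ attached to the edge in the Kasteleyn matrix; equivalently, when we pass from the combinatorial crossings to actual divisors $D_\rho$ on $X_N$ we must correct for the principal (Weil) divisor of the character $\phi(e)$. I would make this precise by the standard toric fact that for a character $\chi^m$ with $m \in {\rm M}$, its divisor on $X_N$ is $\div \chi^m = \sum_{\rho} \langle m, u_\rho\rangle D_\rho$ (this is the formula labeled (\ref{divchar}) in the paper). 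The homology class $[e]$ of the loop $l_e$ closing up $e$ inside $R$ is precisely the class that records which of $\gamma_z, \gamma_w$ the edge crosses, and the collection of zig-zag crossings of a path from $\bw$ to $\w$ differs from the naive count by exactly $[e]$, paired against the $u_\rho$: this is the combinatorial identity $\sum_{\alpha \in Z}(\alpha,\gamma_e)_R\, (\text{something}) = \langle [l_e], u_\rho\rangle$ type relation, reflecting that $\sum_{\alpha}[\alpha]$ (summed appropriately) reconstructs the boundary data of $N$. So the honest formula for $\dd(\w) - \dd(\bw)$ in terms of divisors on $X_N$ is the local two-term expression minus $\div\phi(e)$, which is the claim.

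The main obstacle I anticipate is getting the signs and the normalization of $\phi(e)$ exactly right — in particular disentangling the three sign conventions in play (the black-to-white orientation of edges, the left/right turning rule for zig-zag paths, and the intersection pairing $(\cdot,\cdot)_\T$ versus $(\cdot,\cdot)_R$), and confirming that the contribution of $\div\phi(e)$ enters with coefficient $+1$ in the way the edge character is defined in (\ref{edgeph}). A clean way to handle this is to first verify the formula in a single explicit model edge (e.g. one edge of the square lattice of Figure \ref{figds}, using the Casimirs and points at infinity computed in table (\ref{zzpathtable}) and the discrete Abel map of Example \ref{eg:dam}), which fixes all signs unambiguously, and then observe that both sides of (\ref{DAM}) are ``additive in edges'' in a sense that lets the general case follow from the local one. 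Since the lemma is asserted to ``follow from definitions,'' I would keep the write-up short: state the local reduction, invoke the toric divisor-of-a-character formula, and check compatibility of orientations in one sentence, relegating the sign bookkeeping to the conventions already set in Sections \ref{sec:background} and \ref{sec:cas}.
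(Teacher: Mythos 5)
The paper itself gives no written argument for this lemma (it is stated as following from the definitions), so the question is whether your plan amounts to a complete check; as written it does not, because of a case split you never make. Your first paragraph takes the path $\gamma$ in the defining formula for $\dd$ to run directly along the edge $e$ \emph{inside} $R$, which is only legitimate when $e$ does not cross $\gamma_z$ or $\gamma_w$; but in exactly that case the loop $l_e$ of (\ref{edgeph}) is contractible, so $\phi(e)=1$ and $\div\phi(e)=0$, and the lemma reduces to the two-term formula you derive. Conversely, when $e$ does cross the boundary of $R$ --- the only situation in which the term $-\div\phi(e)$ is nonzero --- your reduction is not allowed, since the definition of $\dd$ requires $\gamma\subset R$. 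So the two halves of your plan do not fit together: the second paragraph ``accounts for'' a term that vanishes whenever the first paragraph applies, and the mechanism you offer for it (a ``correction when we pass from the combinatorial crossings to actual divisors'') is not what is happening, since the definition of $\dd$ already outputs a $\Q$-divisor and needs no such correction.

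The missing step is the comparison of a legal path $\gamma\subset R$ from $\bw$ to $\w$ with the edge $e$ itself. Writing $l_e$ for the loop of (\ref{edgeph}) ($e$ followed by the return path inside $R$), for every zig-zag path $\alpha$ one has $(\alpha,\gamma)_R=(\alpha,e)-([\alpha],[l_e])_\T$. The first term is nonzero only for the two strands through $e$ and gives $-\frac{1}{|E_\sigma|}D_\sigma-\frac{1}{|E_\rho|}D_\rho$, exactly as in your first paragraph. For the second term one needs two facts your plan never states: all $\alpha\in Z_\rho$ have the same primitive homology class (the primitive edge vector of $E_\rho$), and $\#Z_\rho=|E_\rho|$, so that $\sum_{\alpha\in Z_\rho}\frac{([\alpha],[l_e])_\T}{|E_\rho|}=([\alpha],[l_e])_\T$ for any single $\alpha\in Z_\rho$, which the conventions of (\ref{edgeph}) identify with $\langle m,u_\rho\rangle$ where $\phi(e)=\chi^m$; summing over $\rho$ and invoking (\ref{divchar}) then produces precisely the term $-\div\phi(e)$. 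This cancellation of the $1/|E_\rho|$ against the $|E_\rho|$ homologous zig-zag paths in $Z_\rho$ is the entire content of the character term, and it is exactly what your ``$\sum_{\alpha\in Z}(\alpha,\gamma_e)_R\,(\text{something})=\langle[l_e],u_\rho\rangle$ type relation'' leaves unspecified. Your closing suggestion to pin the signs on one explicit edge is a reasonable way to fix orientation conventions, but the appeal to ``additivity in edges'' is not needed (and not really meaningful) once the identity above is in place, since each edge is handled by that identity directly.
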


 \paragraph{Small Newton polygons.}\la{S2.1.1} 

Recall the divisor $D_N$ at infinity of $X_N$, see (\ref{DN}). Given an edge $e=\bw \w$,  we define a $\Q$-divisor at infinity  
\be \la{DE}
\divebw_{\bw \w}:=D_N-\dd(\w)+\dd(\bw)-\sum_{\rho \in \Sigma(1)}\sum_{\alpha \in Z_\rho : \bw \in \alpha } \frac{1}{|E_{\rho}|}D_{\rho}. 
\ee
Here the double sum is over all zig-zag paths $\alpha$ passing through $\bw$ and $|E_\rho|$ denotes the number of integral primitive vectors in $E_\rho$ as in Section~\ref{sec:toricvariety}. We define $b_\rho \in \Q$ as the multiplicities of the projective lines at infinity $D_\rho$ in the divisor $\divebw_{\bw \w} $: 
\be \la{br}
\divebw_{\bw \w} =  \sum_{\rho \in \Sigma(1)} b_\rho D_\rho.
\ee

 
\bd \la{SNP1} The {\it small Newton polygon $N_{\bw \w}$} is the polygon defined by the formula
\be\label{nbwdefinition}
N_{\bw \w}= \bigcap_{\rho \in \Sigma(1)} \{m \in {\rm M}_\R : \langle m, u_\rho \rangle \geq -b_\rho\}.
\ee
{There is a canonical bijection between divisors $D$ in $\operatorname{Div}_{\rm T}^{\Q}(X_N)$ and convex polygons {$P$} with rational intercepts (see Proposition \ref{pro:globalsec} for its importance in toric geometry): 
\be\la{equivalence}
D=\sum_{\rho \in \Sigma(1) }a_\rho D_\rho ~~ \xleftrightarrow[]{} ~~ P=\bigcap_{\rho \in \Sigma(1)}\{m \in {\rm M}_\R: \langle m , u_\rho \rangle \geq -a_\rho\}, \qquad a_\rho \in \Q.
\ee
Therefore,} $N_{\bw \w}$ is the polygon associated to the divisor $\divebw_{\bw \w}$ in (\ref{equivalence}). 
\ed
The polygon $N_{\bw \w}$ may  not be integral. We will consider only integral points in it. The convex hull of the integral points in $N_{\bw \w}$ contains the Newton polygon of $Q_{\bw \w}$ (Corollary \ref{Cor3.5}).

\begin{example}\la{eg:smallply1}
We compute the small polygons for the square lattice in Figure \ref{figds}. Recall that we chose ${\bf w}=\w_1$. Since there is only one zig-zag path in each homology direction, the rational Abel map ${\bf D}$ is obtained from ${\bf d}$ by replacing the point at infinity with the corresponding line at infinity, so from Example \ref{eg:dam}, we have
\[
{\bf D}(\bw_1)=D_{\rho(\gamma)}, \quad {\bf D}(\bw_2)=D_{\rho(\alpha)},\quad {\bf D}(\w_1)=-D_{\rho(\beta)},\quad {\bf D}(\w_2)=-D_{\rho(\delta)}.
\]
We have $D_N=D_{\rho(\alpha)}+D_{\rho(\beta)}+D_{\rho(\gamma)}+D_{\rho(\delta)}$, using which we compute
\begin{align*}
\divebw_{\bw_1 \w_1}&= (D_{\rho(\alpha)}+D_{\rho(\beta)}+D_{\rho(\gamma)}+D_{\rho(\delta)}) -(-D_{\rho(\beta)})+D_{\rho(\gamma)}-(D_{\rho(\alpha)}+D_{\rho(\beta)}+D_{\rho(\gamma)}+D_{\rho(\delta)})\\
&= D_{\rho(\beta)}+D_{\rho(\gamma)},\\
\divebw_{\bw_2 \w_1}&= (D_{\rho(\alpha)}+D_{\rho(\beta)}+D_{\rho(\gamma)}+D_{\rho(\delta)}) -(-D_{\rho(\beta)})+D_{\rho(\alpha)}-(D_{\rho(\alpha)}+D_{\rho(\beta)}+D_{\rho(\gamma)}+D_{\rho(\delta)})\\
&= D_{\rho(\alpha)}+D_{\rho(\beta)}.
\end{align*}
 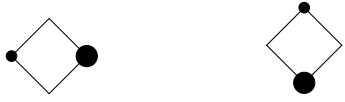
\begin{figure}[t!]
	\centering
	\begin{tikzpicture}
		\draw[fill=black] (0,0) circle (4pt);
		
		\draw[fill=black] (-1,0) circle (2pt);
		
		\draw[] (0,0) -- (-0.5,0.5)--(-1,0) --(-0.5,-0.5)--(0,0);
		
	\end{tikzpicture} \hspace{20mm}
	\begin{tikzpicture}
		\draw[fill=black] (0,0) circle (4pt);
		
		\draw[fill=black] (0,1) circle (2pt);

		\draw[] (0,0) -- (0.5,0.5)--(0,1) --(-0.5,0.5)--(0,0);
		
	\end{tikzpicture}
	\caption{The two small polygons in Example \ref{eg:smallply1}. The big black dot denotes the origin, while the other black dots are integral points.}
	\label{fig:smallpoly1}
\end{figure}
Therefore,
\begin{align*}
    N_{\bw_1 \w_1}&= \{-i-j \geq 0\} \cap \{i-j\geq -1\} \cap \{i+j \geq -1\} \cap \{-i+j \geq 0\},\\
    N_{\bw_2 \w_1}&= \{-i-j \geq -1\} \cap \{i-j\geq -1\} \cap \{i+j \geq 0\} \cap \{-i+j \geq 0\},
\end{align*}
see Figure \ref{fig:smallpoly1}. Note that the convex hulls of the lattice points are the Newton polygons of $Q_{\bw_1 \w_1}$ and $Q_{\bw_2\w_1}$ in (\ref{qbwds}). 
\end{example}

\subsubsection{Rows of the matrix \texorpdfstring{$\mathbb V_{\bw \w}$}{Vbw}}  \la{S2.1.2}

Recall that the variables in $\mathbb V_{\bw \w}$ are $ (a_m)_{m \in N_{\bw {\bf w} } \cap {\rm M}}$. {We identify a Laurent polynomial $F = \sum_{m \in {\rm M}} b_m \chi^m$ with its vector of coefficients $(b_m)_{m \in {\rm M}}$.} The equations in $\mathbb V_{\bw \w}$ are of two types:
\begin{enumerate}
    \item For each $1 \leq i \leq g$, we have the linear equations 
  \be \la{eq1}
    \sum_{m \in N_{\bw {\w} } \cap {\rm M}} a_m \chi^m(p_i,q_i) =0, 
  \ee
  so the entry of the corresponding row of $\mathbb V_{\bw \w}$ in column $m$ is $\chi^m(p_i,q_i)$.
    \item Recall the notation $\floor{x}$ for the largest integer $n$ such that $n \leq x$.   
    
    Given  a $\Q$-divisor $D = \sum_{\rho \in \Sigma(1)} b_\rho D_\rho$, we define a divisor with integral coefficients
  \[
    \floor{D}:=\sum_{\rho \in \Sigma(1)} \floor{b_\rho} D_\rho. 
 \]
Recall the divisor $\divebw_{{\bw}{\w}}$ in (\ref{DE}). {For a divisor $D$ at infinity, let $\restr{D}{\spectralcurve}$ denote the divisor corresponding to the intersection of $D$ with $\spectralcurve$. Precisely, if $D= \sum_{\rho \in \Sigma(1)} a_\rho D_\rho$, then $\restr{D}{\spectralcurve} := \sum_{\rho \in \Sigma(1)} a_\rho \sum_{\alpha \in Z_\rho} \nu(\alpha)$}. We have a linear equation for every zig-zag path $\alpha$ such that $\nu(\alpha)$ appears in 
    \be \la{contpointsatinf}
    -\restr{D_N}{\spectralcurve}+{\bf d}({\w})-{\bf d}(\bw)+\sum_{\alpha \in Z} \nu(\alpha) + \restr{\floor{\divebw_{{\bw }{\w}}}}{\spectralcurve}.
    \ee

    Suppose $\alpha \in Z_\rho$ is a zig-zag path that contributes an equation. We extend $[\alpha]$ to a basis $(x_1,x_2)$ of $\rm M$, where $x_1:= [\alpha]$ and $\langle x_2,u_\rho \rangle =1$, so that for any $m \in {\rm M}$, we can write
\[
    \chi^m =  x_1^{b_m} x_2^{c_m}, ~~~~b_m,c_m \in \Z.
    \]
    Let $N_{\bw {\w}}^\rho$ be the set of lattice points in $N_{\bw {\w}}$ closest 
to the edge $E_{\rho}$ of $N$ i.e., the set of points in $N_{\bw {\w}}$ that minimize the functional $\langle *,u_\rho \rangle$. 
   Then the equation associated with $\alpha$ is 
\be \la{Cas}
    \sum_{m \in N^\rho_{\bw {\w} } \cap {\rm M}}a_m C_\alpha^{-b_m}=0. 
\ee
So the entry in column ${m \in N^\rho_{\bw {\w} } \cap {\rm M}}$ is the monomial $C_\alpha^{-b_m}$, and the entries in the other columns are $0$. Choosing a different basis vector $x_2$ leads to the same equation multiplied by a monomial in $C_\alpha$.
\end{enumerate}

\begin{remark}\la{rem::prim}
	When all the sides of the Newton polygon are primitive, we call the Newton polygon \textit{simple}. In this case, we have $[\divebw_{\bw {\bf w}}]=\divebw_{\bw {\bf w}}$ and ${\bf d}({\bf w})-{\bf d}(\bw) = \restr{(\dd({\bf w})-\dd(\bw))}{\spectralcurve}$. Then Formula~(\ref{contpointsatinf}) simplifies considerably to
	\be \la{eq:simplepoly}
	\sum_{\alpha \in Z: \bw \notin \alpha} \nu(\alpha).
	\ee
	So for a simple Newton polygon the Casimir rows of the matrix $\mathbb V_{\bw \w}$, i.e., the rows providing equations (\ref{Cas}),  are parameterized by the zig-zag paths $\alpha$ which do not contain the vertex ${\rm b}$. 
\end{remark}

{

\subsubsection{The functions \texorpdfstring{${\rm V}_{\bw \w}$}{Vbw}} 

The number of rows of $\mathbb V_{\bw \w}$ is at least as large as the number of columns minus one, but not necessarily equal. However, Proposition~\ref{prop:uniq} shows that there is a unique solution to $\mathbb V_{\bw \w}$ up to a multiplicative constant. Therefore, 
\be \la{DV}
{\rm V}_{\bw \w}:=\sum_{m \in N_{\bw \w } \cap {\rm M}} a_m \chi^m,
\ee
is uniquely defined up to a multiplicative constant (where $(a_m)_{m \in N_{\bw \w } \cap {\rm M}}$ is a solution to $\mathbb V_{\bw \w}$). Only {ratios of the values of} these functions that are independent of the multiplicative constant appear in the inverse map, see Section~\ref{Sec2.3}.
}

\begin{remark} \la{remind}
 When the equations in $\mathbb V_{\bw \w}$ are linearly independent (so there is exactly one less equation than the number of variables), we can {prepend} to $\mathbb V_{\bw \w}$ the equation $\sum_{m \in N_{\bw \w}  \cap {\rm M}} a_m \chi^m$ to get a square matrix, which we denote by $\mathbb V_{\bw \w}^\chi$. Then the function ${\rm V}_{\bw \w}$ is the determinant:
 $$
{\rm V}_{\bw \w} =  \mathrm{det}~\mathbb V_{\bw \w}^\chi.
 $$ 
 Indeed, given an $(n-1)\times n$ matrix $(a_{ij})$, the system of linear equations $\sum^n_{j=1}a_{ij}x_j=0$ has a solution 
 given by the signed maximal minors $A_j$ of the matrix $A$: 
\[
 x_j = (-1)^jA_j.
\]
  Here $A_j$ is the determinant of the matrix obtained by deleting the $j$-th column of $A$.  Therefore, the determinant of the augmented matrix 
  $\mathbb V_{\bw \w}^\chi$ recovers the expression ${\rm V}_{\bw \w} $ in (\ref{DV}).
\end{remark}

\begin{example}
We compute the linear system of equations $\mathbb V_{\bw {\bf w}}$ for the square lattice in Figure \ref{figds} with ${\bf w}=\w_1$. Since both black vertices are contained in every zig-zag path, the formula (\ref{eq:simplepoly}) is $0$, so there are no equations of type 2 in $\mathbb V_{\bw {\bf w}}$ for $\bw \in B$. Therefore,
\[
{\mathbb V}_{\bw_1 {\bf w}}=\begin{pmatrix}
1& p^{-1}
\end{pmatrix}, \quad {\mathbb V}_{\bw_2 {\bf w}}=\begin{pmatrix}
1& q
\end{pmatrix}.
\]
By Remark \ref{remind}, we get
\be \la{v:eg}
{\rm V}_{\bw_1 {\bf w}}=\begin{vmatrix}
1 & z^{-1}\\
1& p^{-1}
\end{vmatrix}, \quad {\rm V}_{\bw_2 {\bf w}}=\begin{vmatrix}
1 & w\\
1& q
\end{vmatrix}.
\ee
Using (\ref{pqds}), we have 
\begin{align*}
\kappa_{\Gamma,{\bf w}}^*({\rm V}_{\bw_1 {\bf w}})&=A X_1 X_2-\frac{1}{z}=A X_2  Q_{\bw_1 {\bf w}},\\
\kappa_{\Gamma,{\bf w}}^*({\rm V}_{\bw_2 {\bf w}})&=\frac 1 B-w=\frac 1 B  Q_{{\bw_2} {\bf w}},
\end{align*}
verifying the conclusion of Theorem \ref{Vbwthm}.
\end{example}

\subsection{Reconstructing weights via functions \texorpdfstring{${\rm V}_{\bw {\bf w}}$}{vbw}.} \la{Sec2.3}

Take a white vertex $\w$ and a zig-zag path $\alpha$ containing $\w$. The pair $(\w, \alpha)$ determines a {\it wedge} $W:=\bw \xrightarrow[]{e} \w \xrightarrow[]{e'} \bw'$,
 where $\w$ is a white vertex incident to the vertices $\bw, \bw'$ such that $\bw \w \bw'$ is  a part of $\alpha$. Recall $\phi(e)$ from (\ref{edgeph}), and   the Kasteleyn sign $\epsilon(e)$.
  We assign to this wedge the ratio
\be \la{RV}
r_W:=-\frac{\epsilon(e')\phi(e'){\rm V}_{\bw' {\bf w}}}{\epsilon(e)\phi(e){\rm V}_{\bw {\bf w}}}(\nu(\alpha)).
\ee
Note that we use the distinguished white vertex ${\bf w}$ in the expression rather than ${\w}$. The expression is in fact independent of $\w$, as {we will see in the proof of Theorem~\ref{Th2.3} below.} 

{
\begin{remark}\la{remark:factor}
	 The ratio on the right is a rational function on the curve. We evaluate the ratio at the point at infinity of the spectral curve $\nu(\alpha)$ corresponding to the zig-zag path $\alpha$, see (\ref{ZA}). To do this, we first extend $[\alpha]$ to a basis $(x_1,x_2)$ of $\rm M$ with $[\alpha]=x_1$ and $\langle x_2,u_\rho \rangle =1$ , as explained in Section~\ref{sec:cas}. Then $\nu(\alpha)$ is given by $\frac{1}{x_1}=C_\alpha, x_2=0$. The numerator and denominator in (\ref{RV}) vanish to the same order in $x_2$ by Corollary~\ref{divQbw} below, so after factoring out and canceling the highest power of $x_2$ in the numerator and denominator, we can evaluate at ${x_1}=\frac{1}{C_\alpha}, x_2=0$ to get a well-defined number.
\end{remark}
}

Let $L=\bw_1 \to \w_1 \to \bw_2 \to \dots \to \bw_\ell=\bw_1$ be an oriented loop on $\Gamma$. It is a concatenation of
wedges $W_i:=\bw_{i-1} \w_i \bw_i, i=1,\dots,\ell$ (with $i$ taking values {cyclic modulo $\ell$}) provided by the white vertices. Denote by $\alpha_i$ the zig-zag path assigned to the wedge $W_i$.  We define a cohomology class $[\omega]$ by
\begin{equation}\label{mon}
\begin{split}
[\omega]([L]):=& \prod_{i =1}^\ell r_{W_i}.
\end{split}
\end{equation}

\bl
The product (\ref{mon}) does not depend on the ambiguities of {the multiplicative constants in} the involved functions ${\rm V}_{{{\rm b}{\bf w}}}$.
\el

\begin{proof} For each 
	black vertex $\bw_i$ in $L$, ${\rm V}_{\bw_i {\bf w}}$ appears twice in (\ref{mon}), once each in the numerator and denominator, and so the multiplicative constants cancel out.  
\end{proof}

 \bt \la{Th2.3} The cohomology classes $[wt]$ and $\kappa_{\Gamma,{\bf w}}^*[\omega]$ are equal.
\et 
{\begin{proof}
	Let $\bw \xrightarrow[]{e}\w \xrightarrow[]{e'} \bw'$ be a wedge with zig-zag path $\alpha \in Z_\rho$.  The restriction of the characteristic polynomial $\restr{P(z,w)}{D_\rho}$ is the partition function of those dimers whose homology class in $N$ lies on $E_\rho$. From the explicit construction of external dimers in \cite{GK12} (that is, dimers whose homology classes are in $\partial N$), we have that each dimer with homology class in $E_\rho$ uses exactly one of the edges $e$ or  $e'$. Since $Q_{\bw \w}(z,w)$ is the partition function of dimers with the vertices $\bw,\w$ removed, we have 
	\[
	\restr{P}{D_\rho}=wt (e) \epsilon{(e)} \phi(e) \restr{Q_{\bw \w}}{D_\rho}+wt(e')  \epsilon{(e')} \phi(e') \restr{Q_{\bw' \w}}{D_\rho}.
	\]
	Since $\nu(\alpha)$ is on the spectral curve, $P(\nu(\alpha))=0$, from which we get
	\be \la{eq:altprod}
	\frac{wt(e) }{wt(e') } =-\frac{\epsilon(e' )\phi(e')Q_{\bw' \w}}{\epsilon(e)\phi(e) Q_{\bw \w}}(\nu(\alpha)).
	\ee
	We have $\text{corank}(K)=1$ at smooth points of $\spectralcurve$. Note that $\restr{KQ}{\spectralcurve}=0$. Therefore, for generic $wt$, since $\spectralcurve$ is smooth, $Q$ is a rank $1$ matrix given by  
\[	Q=\ker K^* \otimes \coker K. \]
This implies that
\[
	\frac{Q_{\bw \w}}{Q_{\bw' \w}}(\nu(\alpha))=\frac{Q_{{\bw}{\bf w}}}{Q_{{\bw}'{\bf w}}}(\nu(\alpha)). \qedhere
\]
	
\end{proof}
}

\begin{example}
Consider the cycle $a$ in Figure~\ref{figds} {given by} {the red horizontal path}. We write it as the concatenation of the two wedges {$W_1$ and $W_2$ represented by $(\w_1,\delta)$ and $(\w_1,\gamma)$ respectively.} From Table~(\ref{zzpathtable}), we know that in the basis $x_1=z w,x_2=w$, the point $\nu(\delta)$ is given by $x_1=\frac{1}{C_\delta},x_2=0$. Using (\ref{v:eg}), and making the substitution $z=\frac{x_1}{x_2},w=x_2$, we get
\begin{align*}
r_{W_1}&=- \frac{-1\cdot w^{-1}\cdot V_{\bw_2 {\bf w}}}{-1 \cdot z \cdot  V_{\bw_1 {\bf w}}}(\nu(\delta))\\
&=-\frac{1}{zw} \frac{q-w}{p^{-1}-z^{-1}}(\nu(\delta))\\
&=\frac{-(q-x_2)}{x_1 p^{-1}-x_2}\left(\frac{1}{C_\delta},0\right)\\
&=-{pq}{C_\delta}.
\end{align*}
 Similarly, from table (\ref{zzpathtable}) we know that in the basis $x_1=\frac z w,x_2=w$, the point $\nu(\gamma)$ is given by $x_1=\frac{1}{C_\gamma},x_2=0$. Using (\ref{v:eg}), and making the substitution $z={x_1}{x_2},w=x_2$, we get
\begin{align*}
r_{W_2}&=- \frac{1\cdot 1\cdot V_{\bw_1 {\bf w}}}{-1 \cdot w^{-1} \cdot  V_{\bw_2 {\bf w}}}(\nu(\gamma))\\
&=w \frac{p^{-1}-z^{-1}}{q-w}(\nu(\gamma))\\
&=x_2 \frac{p^{-1}-\frac{1}{x_1 x_2}}{q-x_2}\left(\frac{1}{C_\gamma},0\right)\\
&=-\frac{C_\gamma}{q}.
\end{align*}
\end{example}
Therefore, $[\omega]([a])={p C_\gamma}{C_\delta}$, and using (\ref{cassq}) and (\ref{pqds}), we have 
\begin{align*}
\kappa_{\Gamma,{\bf w}}^*[\omega]([a])&=\left(\frac{1}{AX_1X_2} \right)\cdot \left(-\frac{AX_1 X_2 X_3}{B} \right)\cdot \left({-\frac{AB}{X_3}}\right)\\
&=A. 
\end{align*}

\section{Examples}\la{sec:example}

In this section, we work out two detailed examples.

\subsection{Primitive genus \texorpdfstring{$2$}{2} example}

\begin{figure}
\centering

	\begin{tikzpicture}  
	\begin{scope}[scale=1.5,shift={(0,3)}]
		\draw[dashed, gray] (0,0) --(1,1.732)--(5,1.732)--(6,0)--(5,-1.732)--(1,-1.732)--(0,0);

\coordinate[bvert] (b11) at (2*1+1-1,1.732*1-0.5773);
        \coordinate[wvert] (w11) at (2*1+1-2,1.732*1-1.732+0.5773);
        \coordinate[bvert] (b21) at (2*2+1-1,1.732*1-0.5773);
        \coordinate[wvert] (w21) at (2*2+1-2,1.732*1-1.732+0.5773);
        \coordinate[wvert] (w31) at (2*3+1-2,1.732*1-1.732+0.5773);
        
\coordinate[wvert] (w'11) at (2*1+1-1,-1.732*1+0.5773);
        \coordinate[bvert] (b'11) at (2*1+1-2,-0.5773);
        \coordinate[wvert] (w'21) at (2*2+1-1,-1.732*1+0.5773);
        \coordinate[bvert] (b'21) at (2*2+1-2,-1.732*1+1.732-0.5773);
        \coordinate[bvert] (b'31) at (2*3+1-2,-1.732*1+1.732-0.5773);   
        
    \coordinate[] (b01) at (1-1+0.5,1.732*1-1.732+0.866);  
        \coordinate[] (b41) at (6-0.5,1.732*1-1.732+0.866);
    \coordinate[] (w'01) at (1-1+0.5,-1.732*1+1.732-0.866); 
            \coordinate[] (w'41) at (6-0.5,-1.732*1+1.732-0.866);
    \draw[](b01)--(w11)--(b11)--(w21)--(b21)--(w31)--(b41);
        \draw[](w'01)--(b'11)--(w'11)--(b'21)--(w'21)--(b'31)--(w'41)
        (b'11)--(w11)
        (b'21)--(w21)
        (b'31)--(w31)
        
        (b11)--(2,1.732)
        (b21)--(4,1.732)
        
        (w'11)--(2,-1.732)
        (w'21)--(4,-1.732)
        
        ;
  \draw[red,very thick,->] (b01)--(b41);
    \draw[red,very thick,->] (w'01)--(w'41);
\draw[green, very thick,<-] (b01)--(2,-1.732);
\draw[green, very thick,<-](2,1.732)--(4,-1.732);
\draw[green, very thick,<-]
(4,1.732)--(w'41)
;
\draw[blue,very thick,->] (2,1.732)--(w'01);
\draw[blue,very thick,<-](2,-1.732)--(4,1.732);
\draw[blue,very thick,<-](4,-1.732)--(b41)
;  
        
\end{scope}
\begin{scope}[shift={(12,6)},scale=0.7]
\draw[text=black,red,very thick,->] (-1,-1)--node[black,below]{$[\gamma]$}(1,0);
\draw[text=black,green,very thick,->] (1,0)--node[black,right]{$[\alpha]$}(0,2);
\draw[text=black,blue,very thick,->] (0,2)--node[black,left]{$[\beta]$}(-1,-1);
	\draw[fill=black] (0,0) circle (4pt);
		
		\draw[fill=black] (0,1) circle (2pt);
		\draw[fill=black] (1,0) circle (2pt);
		\draw[fill=black] (0,2) circle (2pt);
			\draw[fill=black] (-1,-1) circle (2pt);
			\node[](no) at (0,-2){$N$};
			
\end{scope}	

\begin{scope}[shift={(12,2)},scale=0.7]

\draw[red,very thick,->] (0,0)--(-1,2);
\draw[green,very thick,->] (0,0)--(-2,-1);

\draw[blue,very thick,->] (0,0)--(3,-1);
	\draw[fill=black] (0,0) circle (2pt);
	\node[](no) at (1.5,0) {$\sigma_\beta$};
		\node[](no) at (-1.5,-0.3) {$\sigma_\alpha$};
			\node[](no) at (-0.3,1.5) {$\sigma_\gamma$};
				\node[](no) at (0,-1.5){$\Sigma$};
\end{scope}	
\end{tikzpicture}
\caption{A hexagonal graph, its Newton polygon {$N$} and normal fan {$\Sigma$}, with zig-zag paths and rays labeled.}\label{fig:hex}
\end{figure}
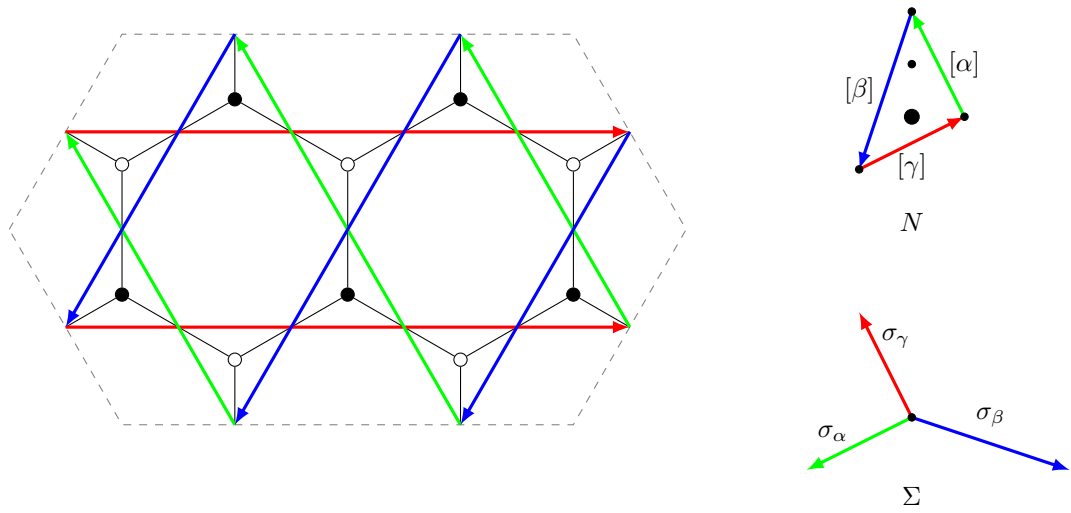

\begin{figure}
\centering

	\begin{tikzpicture}[scale = 1.5]  
		\draw[dashed, gray] (0,0) --(1,1.732)--(5,1.732)--(6,0)--(5,-1.732)--(1,-1.732)--(0,0);

\coordinate[bvert,label=below:${\bw_1}$] (b11) at (2*1+1-1,1.732*1-0.5773);
        \coordinate[wvert,label=right:${{\bf w}=\w_1}$] (w11) at (2*1+1-2,1.732*1-1.732+0.5773);
        \coordinate[bvert,label=below:${\bw_2}$] (b21) at (2*2+1-1,1.732*1-0.5773);
        \coordinate[wvert,label=above:${\w_2}$] (w21) at (2*2+1-2,1.732*1-1.732+0.5773);
        \coordinate[wvert,label=left:${\w_3}$] (w31) at (2*3+1-2,1.732*1-1.732+0.5773);
        
\coordinate[wvert,label=above:${\w_4}$] (w'11) at (2*1+1-1,-1.732*1+0.5773);
        \coordinate[bvert,label=below:${\bw_3}$] (b'11) at (2*1+1-2,-0.5773);
        \coordinate[wvert,label=above:${\w_5}$] (w'21) at (2*2+1-1,-1.732*1+0.5773);
        \coordinate[bvert,label=below:${\bw_4}$] (b'21) at (2*2+1-2,-1.732*1+1.732-0.5773);
        \coordinate[bvert,label=below:${\bw_5}$] (b'31) at (2*3+1-2,-1.732*1+1.732-0.5773);   
        
    \coordinate[] (b01) at (1-1+0.5,1.732*1-1.732+0.866);  
        \coordinate[] (b41) at (6-0.5,1.732*1-1.732+0.866);
    \coordinate[] (w'01) at (1-1+0.5,-1.732*1+1.732-0.866); 
            \coordinate[] (w'41) at (6-0.5,-1.732*1+1.732-0.866);
    \draw[](b01)--(w11)--(b11)--node[below]{$\frac{1}{X_2}$}(w21)--node[below]{$X_3$}(b21)--(w31)--node[above]{$\frac{1}{AB X_1 zw}$}(b41);
        \draw[](w'01)--(b'11)--(w'11)--(b'21)--(w'21)--(b'31)--node[below]{$Az$}(w'41)
        (b'11)--(w11)
        (b'21)--(w21)
        (b'31)--(w31)
        
        (b11)--node[left]{$X_1 X_4 B w$}(2,1.732)
        (b21)--node[left]{$B w$}(4,1.732)
        
        (w'11)--(2,-1.732)
        (w'21)--(4,-1.732)
        
        ;
        \def\lw{1.5};
\draw[red,very thick,->] (b01)--(w11);
\draw[red,very thick,->] (w11)--(b'11);
\draw[red,very thick,->](b'11)--(w'11);
\draw[red,very thick,->](w'11)--(b'21);
\draw[red,very thick,->](b'21)--(w'21);
\draw[red,very thick,->](w'21.45)--(b'31.195);
\draw[red,very thick,->](b'31)--(w'41);

\draw[green,very thick,->] (4,-1.732)--(w'21);
\draw[green,very thick,->] (w'21.20)--(b'31.220);
\draw[green,very thick,->] (b'31)--(w31);
\draw[green,very thick,->] (w31)--(b21);
\draw[green,very thick,->] (b21)--(4,1.732);

\node[](no) at (4,-1.732-0.5){$b$};
\node[](no) at (1-1+0.5-0.5,1.732*1-1.732+0.866){$a$};
\node[blue](no) at (0,0) {$f_1$};

\node[blue](no) at (2,0) {$f_2$};
\node[blue](no) at (4,0) {$f_3$};
\node[blue](no) at (6,0) {$f_4$};
\node[blue](no) at (3,1.732) {$f_5$};

\end{tikzpicture}
\caption{Labeling of the vertices and faces of $\Gamma$, a cocycle {representing $[wt]$ and $\phi$}, where $X_i=[wt]([\partial f_i]),A=[wt]([a]),B=[wt](b)$, and $a$ and $b$ are the red and green cycles respectively. {The Kasteleyn sign $\epsilon$ is $1$ for all edges. If no weight or $\phi$ is indicated for an edge, it means that it is $1$.}}\label{fig:damhex}
\end{figure}
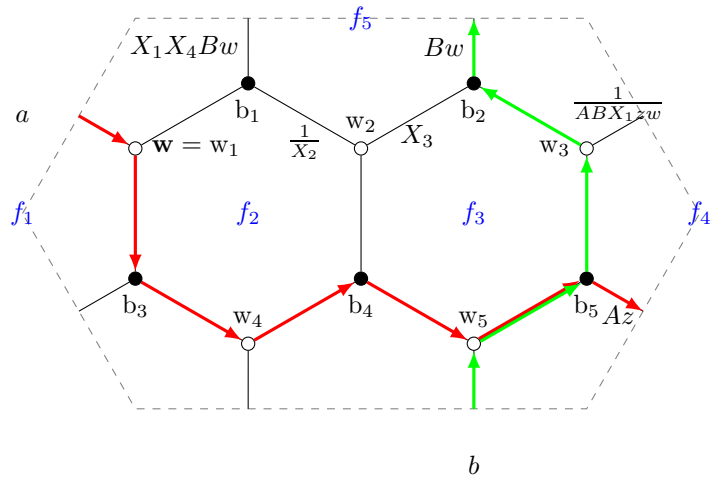

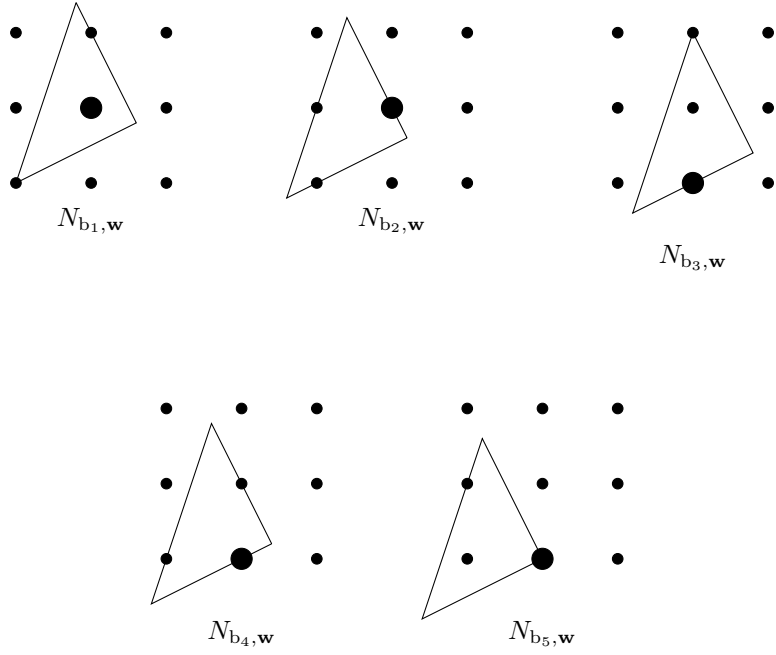
\begin{figure}
	\centering
	
	\begin{tikzpicture}  
		\begin{scope}[shift={(10,6)}]
			\draw[](-1/5,7/5)--(-1,-1)--(3/5,-1/5)--(-1/5,7/5);
			\draw[fill=black] (0,0) circle (4pt);
			
			\draw[fill=black] (0,1) circle (2pt);
			\draw[fill=black] (1,0) circle (2pt);
			\draw[fill=black] (0,-1) circle (2pt);
			\draw[fill=black] (-1,0) circle (2pt);
			\draw[fill=black] (1,1) circle (2pt);
			\draw[fill=black] (1,-1) circle (2pt);
			\draw[fill=black] (-1,-1) circle (2pt);
			\draw[fill=black] (-1,1) circle (2pt);
			\node[](no) at (0,-1.5){$N_{\bw_1,{\bf w}}$};
		\end{scope}	
		
		\begin{scope}[shift={(14,6)}]
			\draw[](1/5,-2/5)--(-3/5,6/5)--(-7/5,-6/5)--(1/5,-2/5);
			\draw[fill=black] (0,0) circle (4pt);
			
			\draw[fill=black] (0,1) circle (2pt);
			\draw[fill=black] (1,0) circle (2pt);
			\draw[fill=black] (0,-1) circle (2pt);
			\draw[fill=black] (-1,0) circle (2pt);
			\draw[fill=black] (1,1) circle (2pt);
			\draw[fill=black] (1,-1) circle (2pt);
			\draw[fill=black] (-1,-1) circle (2pt);
			\draw[fill=black] (-1,1) circle (2pt);
			\node[](no) at (0,-1.5){$N_{\bw_2,{\bf w}}$};
		\end{scope}	
		
		\begin{scope}[shift={(18,5)}]
			\draw[](0,2)--(4/5,2/5)--(-4/5,-2/5)--(0,2
			);
			\draw[fill=black] (0,0) circle (4pt);
			\draw[fill=black] (0,1) circle (2pt);
			\draw[fill=black] (1,0) circle (2pt);
			\draw[fill=black] (0,2) circle (2pt);
			\draw[fill=black] (-1,0) circle (2pt);
			\draw[fill=black] (1,1) circle (2pt);
			\draw[fill=black] (1,2) circle (2pt);
			\draw[fill=black] (-1,2) circle (2pt);
			\draw[fill=black] (-1,1) circle (2pt);
			\node[](no) at (0,-1){$N_{\bw_3,{\bf w}}$};
		\end{scope}	
		
		\begin{scope}[shift={(12,0)}]
			\draw[](2/5,1/5)--(-2/5,9/5)--(-6/5,-3/5)--(2/5,1/5);
			\draw[fill=black] (0,0) circle (4pt);
			
			\draw[fill=black] (0,1) circle (2pt);
			\draw[fill=black] (1,0) circle (2pt);
			\draw[fill=black] (0,2) circle (2pt);
			\draw[fill=black] (-1,0) circle (2pt);
			\draw[fill=black] (1,1) circle (2pt);
			\draw[fill=black] (1,2) circle (2pt);
			\draw[fill=black] (-1,2) circle (2pt);
			\draw[fill=black] (-1,1) circle (2pt);
			\node[](no) at (0,-1){$N_{\bw_4,{\bf w}}$};
		\end{scope}	
		
		\begin{scope}[shift={(16,0)}]
			\draw[](0,0)--(-4/5,8/5)--(-8/5,-4/5)--(0,0);
			\draw[fill=black] (0,0) circle (4pt);
			
			\draw[fill=black] (0,1) circle (2pt);
			\draw[fill=black] (1,0) circle (2pt);
			\draw[fill=black] (0,2) circle (2pt);
			\draw[fill=black] (-1,0) circle (2pt);
			\draw[fill=black] (1,1) circle (2pt);
			\draw[fill=black] (1,2) circle (2pt);
			\draw[fill=black] (-1,2) circle (2pt);
			\draw[fill=black] (-1,1) circle (2pt);
			\node[](no) at (0,-1){$N_{\bw_5,{\bf w}}$};
		\end{scope}	
	\end{tikzpicture}
	\caption{The small polygons for the hexagonal graph.}\label{fig:hexsp}
\end{figure}

Consider the hexagonal graph $\Gamma$ with Newton polygon $N$ and {normal} fan $\Sigma$ as shown in Figure \ref{fig:hex}. 
{We label the vertices of $\Gamma$ as in Figure~\ref{fig:damhex}.} We label the zig-zag paths by $\alpha,\beta,\gamma$, and denote the ray of $\Sigma$ dual to $\tau \in \{\alpha,\beta,\gamma\}$ by $\sigma_\tau$. 

We can take $X_i=[wt]([\partial f_i]), i=1,\dots,4$, and $A=[wt]([a]), B=[wt]([b])$ as coordinates on $H^1(\Gamma,\C)$ (see Figure \ref{fig:damhex}).

The Casimirs are
\begin{align}\la{cashexx}
    C_\alpha= -\frac{B^2 X_1 X_2 X_4}{A}, \quad C_\beta=- \frac{X_3}{A B^3 X_1^2 X_4},\quad C_\gamma = \frac{A^2 B X_1}{X_2 X_3}.
\end{align}
The Kasteleyn matrix is
\[
K=\begin{blockarray}{cccccc}
{\bw_1}& \bw_2&\bw_3&\bw_4&\bw_5& \\
\begin{block}{(ccccc)c}
  1&0&1&0&Az & \w_1\\
 \frac{1}{X_2}&X_3&0&1&0 & \w_2\\
 0&1&\frac{1}{A B X_1 z w}&0&1&\w_3\\
 X_1 X_4 B w &0&1&1&0&\w_4\\
 0&Bw&0&1&1&\w_5\\
\end{block}
\end{blockarray}
\]
Let $P(z,w)=\det K$ and $\spectralcurve=\overline{\{P(z,w)=0\}}$.
The spectral transform is $\kappa_{\Gamma,{\bf w}}=(\spectralcurve,S,\nu) \in \mathcal S_N$, where {since the interior of $N$ contains two lattice points, the divisor} $S=(p_1,q_1)+(p_2,q_2)$ {is a sum of two points, where} 
\begin{align}
   p_1&=-\frac{\sqrt{(-B {X_1} {X_2} {X_3} {X_4}-B {X_1} {X_2} {X_4}-B)^2-4 B^2 {X_1} {X_2}
   {X_4}}+B {X_1} {X_2} {X_3} {X_4}-B {X_1} {X_2} {X_4}+B}{2 A B {X_1}}, \nonumber\\
   q_1&=\frac{-\sqrt{(-B {X_1} {X_2} {X_3} {X_4}-B {X_1} {X_2} {X_4}-B)^2-4 B^2 {X_1} {X_2}
   {X_4}}+B {X_1} {X_2} {X_3} {X_4}+B {X_1} {X_2} {X_4}+B}{2 B^2 {X_1} {X_2} {X_4}}, \nonumber\\
     p_2&=-\frac{-\sqrt{(-B {X_1} {X_2} {X_3} {X_4}-B {X_1} {X_2} {X_4}-B)^2-4 B^2 {X_1} {X_2}
   {X_4}}+B {X_1} {X_2} {X_3} {X_4}-B {X_1} {X_2} {X_4}+B}{2 A B {X_1}}, \nonumber\\
   q_2&=\frac{\sqrt{(-B {X_1} {X_2} {X_3} {X_4}-B {X_1} {X_2} {X_4}-B)^2-4 B^2 {X_1} {X_2}
   {X_4}}+B {X_1} {X_2} {X_3} {X_4}+B {X_1} {X_2} {X_4}+B}{2 B^2 {X_1} {X_2} {X_4}}. \la{eq:pqpq}
\end{align}
The points at infinity are given by the following table:
{
\be \def\arraystretch{1.5}
\begin{array}{|cccc|}
	\hline
	\text{Zig-zag path} & \text{Homology class} & \text{Basis $x_1,x_2$} & \text{Point at infinity}\\
	\hline
	\alpha & (-1,2) & (-1,2),(0,-1) &x_1=\frac{1}{C_\alpha},x_2=0 \\ [0.5ex]
	\hline
	\beta & (-1,-3) & (-1,-3),(0,-1) &x_1=\frac{1}{C_\beta},x_2=0 \\[0.5ex]
	\hline
	\gamma & (2,1)& (2,1),(-1,0)&x_1=\frac{1}{C_\gamma},x_2=0 \\[0.5ex]
	\hline
\end{array} \la{zzpathtable3}
\ee
}

The discrete Abel map $\dd$ is given by
\begin{align*}
    \dd({\bf w})&=0,& 
    \dd(\bw_1)&=D_\beta+D_\gamma,&
    \dd(\bw_2)&=-D_\alpha+2D_\beta+D_\gamma,\\
    \dd(\bw_3)&=D_\alpha+ D_\beta,&
    \dd(\bw_4)&=2D_\beta,&
    \dd(\bw_5)&=-D_\alpha+3D_\beta,
\end{align*}
and $D_N=2D_\alpha+2 D_\beta+D_\gamma$. Since $\dd({\bf w})=0$ and every black vertex {$\bw$} is contained in every zig-zag path, we have 
\begin{align*}
    \divebw_{\bw {\bf w}}&=2 D_\alpha + 2 D_\beta + D_\gamma +\dd(\bw)-D_\alpha-D_\beta-D_\gamma \\
    &=\dd(\bw)+D_\alpha+D_\beta.
\end{align*}
Using this, we compute
\begin{align*}
    \divebw_{\bw_1 {\bf w}}&=D_\alpha+2 D_\beta+D_\gamma,&
    \divebw_{\bw_2 {\bf w}}&=3D_\beta+D_\gamma,&
    \divebw_{\bw_3 {\bf w}}&=2D_\alpha+2D_\beta,\\
    \divebw_{\bw_4 {\bf w}}&=D_\alpha+3D_\beta,&
    \divebw_{\bw_5 {\bf w}}&=4D_\beta.&&
\end{align*}
The small polygons are shown in Figure~\ref{fig:hexsp}. Since the Newton polygon $N$ is primitive, we are in the setting of Remark~\ref{rem::prim}. Every zig-zag path contains every black vertex, so the expression (\ref{eq:simplepoly}) is $0$. Therefore, there are no equations of type 2 in the linear system $\mathbb V_{\bw {\bf w}}$ for any black vertex $\bw$. Since $g=2$, we have two equations of type $1$ for every black vertex $\bw$. Moreover, we note that each of the small polygons in Figure~\ref{fig:hexsp} contains exactly three lattice points, so by Remark~\ref{remind}, we get
\begin{align*}
    {\rm V}_{\bw_1 {\bf w}}&=\begin{vmatrix}
1 & w & z^{-1} w^{-1}\\
1& q_1 & p_1^{-1} q_1^{-1}\\
1& q_2 & p_2^{-1} q_2^{-1}
\end{vmatrix},& {\rm V}_{\bw_2 {\bf w}}&=\begin{vmatrix}
1 & z^{-1} & z^{-1} w^{-1}\\
1& p_1^{-1} & p_1^{-1} q_1^{-1}\\
1& p_2^{-1} & p_2^{-1} q_2^{-1}
\end{vmatrix},&{\rm V}_{\bw_3 {\bf w}}&=\begin{vmatrix}
1 & w &  w^{2}\\
1& q_1 &  q_1^{2}\\
1& q_2 &  q_2^{2}
\end{vmatrix},\\
{\rm V}_{\bw_4 {\bf w}}&=\begin{vmatrix}
1 & w & z^{-1}\\
1& q_1 & p_1^{-1} \\
1& q_2 & p_2^{-1} 
\end{vmatrix},& {\rm V}_{\bw_5 {\bf w}}&=\begin{vmatrix}
1 & z^{-1}w & z^{-1} \\
1& p_1^{-1}q_1 & p_1^{-1} \\
1& p_2^{-1}q_2 & p_2^{-1} 
\end{vmatrix}.&&
\end{align*}
The boundary of the face $f_2$ is the concatenation of the three wedges {$W_1,W_2$ and $W_3$ represented by  
$(\w_2,\alpha), ({\bf w},\beta)$ and $(\w_4,\gamma)$ respectively}. We compute
\begin{align*}
    r_{W_1}=-\frac{V_{\bw_1 \w_2}}{V_{\bw_4 \w_2}}(\nu(\alpha))=-\frac{\begin{vmatrix}
1 & w & z^{-1} w^{-1}\\
1& q_1 & p_1^{-1} q_1^{-1}\\
1& q_2 & p_2^{-1} q_2^{-1}
\end{vmatrix}}{\begin{vmatrix}
1 & w & z^{-1}\\
1& q_1 & p_1^{-1} \\
1& q_2 & p_2^{-1} 
\end{vmatrix}}(\nu(\alpha)).
\end{align*}
To evaluate at $\nu(\alpha)$, as explained in Remark  \ref{remark:factor}, we extend $[\alpha]=(-1,2)$ to the basis $(x_1,x_2)$ of ${\rm M}$, where $x_1=[\alpha]=(-1,2)$ and $x_2= (0,-1)$. Then $\nu(\alpha)$ is given by $x_1=\frac{1}{C_\alpha},x_2=0$. Expressing $z,w$ in the basis $(x_1,x_2)$ as $z=\frac{1}{x_1x_2^2} ,w=\frac{1}{x_2}$, we get
\begin{align*}
    r_{W_1}&=-\frac{\begin{vmatrix}
1 & \frac{1}{x_2} & x_1 x_2^3\\
1& q_1 & p_1^{-1} q_1^{-1}\\
1& q_2 & p_2^{-1} q_2^{-1}
\end{vmatrix}}{\begin{vmatrix}
1 & \frac{1}{x_2} & x_1 x_2^2\\
1& q_1 & p_1^{-1} \\
1& q_2 & p_2^{-1} 
\end{vmatrix}}\left( \frac{1}{C_\alpha},0\right)= -\frac{\begin{vmatrix}
x_2 & {1} & {x_1}x_2^4\\
1& q_1 & p_1^{-1} q_1^{-1}\\
1& q_2 & p_2^{-1} q_2^{-1}
\end{vmatrix}}{\begin{vmatrix}
x_2 & 1 & {x_1}{x_2}^3\\
1& q_1 & p_1^{-1} \\
1& q_2 & p_2^{-1} 
\end{vmatrix}}\left( \frac{1}{C_\alpha},0\right)= -\frac{\begin{vmatrix}
0 & {1} & 0\\
1& q_1 & p_1^{-1} q_1^{-1}\\
1& q_2 & p_2^{-1} q_2^{-1}
\end{vmatrix}}{\begin{vmatrix}
0 & 1 & 0\\
1& q_1 & p_1^{-1} \\
1& q_2 & p_2^{-1} 
\end{vmatrix}}\\&={-\frac{p_1 q_1 -p_2 q_2}{q_1
q_2 (p_1-p_2)}},
\end{align*}
where we factored out $x_2$ from the numerator and denominator and then evaluated at $(x_1,x_2)=( \frac{1}{C_\alpha},0)$.

For $W_2$, letting $(x_1,x_2)=((-1,-3),(0,-1))$ we have $z=\frac{x_2^3}{x_1 },w=\frac{1}{x_2}$, and $\nu(\beta)$ is given by $x_1=\frac{1}{C_\beta},x_2=0$. Therefore, we get
\begin{align*}
    r_{W_2}&=-\frac{V_{\bw_3 {\bf w}}}{V_{\bw_1 {\bf w}}}(\nu(\beta))=-\frac{\begin{vmatrix}
1 & w &  w^{2}\\
1& q_1 &  q_1^{2}\\
1& q_2 &  q_2^{2}
\end{vmatrix}}{\begin{vmatrix}
1 & w & z^{-1} w^{-1}\\
1& q_1 & p_1^{-1} q_1^{-1}\\
1& q_2 & p_2^{-1} q_2^{-1}
\end{vmatrix}}(\nu(\beta))=-\frac{\begin{vmatrix}
1 & \frac{1}{x_2} &  \frac{1}{x_2^2}\\
1& q_1 &  q_1^{2}\\
1& q_2 &  q_2^{2}
\end{vmatrix}}{\begin{vmatrix}
1 & \frac{1}{x_2} & \frac{x_1}{ x_2^2}\\
1& q_1 & p_1^{-1} q_1^{-1}\\
1& q_2 & p_2^{-1} q_2^{-1}
\end{vmatrix}}\left(\frac{1}{C_\beta},0\right)=-\frac{\begin{vmatrix}
0 & 0&  1\\
1& q_1 &  q_1^{2}\\
1& q_2 &  q_2^{2}
\end{vmatrix}}{\begin{vmatrix}
0 & 0 & \frac{1}{C_\beta}\\
1& q_1 & p_1^{-1} q_1^{-1}\\
1& q_2 & p_2^{-1} q_2^{-1}
\end{vmatrix}}\\&={-C_\beta}.
\end{align*}
Finally, for $W_3$, letting $(x_1,x_2)=((2,1),(-1,0))$ we have $z=\frac{1}{x_2 },w=x_1{x_2^2}$, and $\nu(\gamma)$ is given by $x_1=\frac{1}{C_\gamma},x_2=0$. Therefore, we get
\begin{align*}
    r_{W_3}&=-\frac{V_{\bw_4 {\bf w}}}{V_{\bw_3 {\bf w}}}(\nu(\gamma))=-\frac{\begin{vmatrix}
1 & w & z^{-1}\\
1& q_1 & p_1^{-1} \\
1& q_2 & p_2^{-1} 
\end{vmatrix}}{\begin{vmatrix}
1 & w &  w^{2}\\
1& q_1 &  q_1^{2}\\
1& q_2 &  q_2^{2}
\end{vmatrix}}(\nu(\gamma))=-\frac{\begin{vmatrix}
1 & x_1x_2^2 & x_2\\
1& q_1 & p_1^{-1} \\
1& q_2 & p_2^{-1} 
\end{vmatrix}}{\begin{vmatrix}
1 & x_1x_2^2  &  x_1^2 x_2^4\\
1& q_1 &  q_1^{2}\\
1& q_2 &  q_2^{2}
\end{vmatrix}}\left(\frac{1}{C_\gamma},0\right)=-\frac{\begin{vmatrix}
1 & 0 & 0\\
1& q_1 & p_1^{-1} \\
1& q_2 & p_2^{-1} 
\end{vmatrix}}{\begin{vmatrix}
1 & 0  &  0\\
1& q_1 &  q_1^{2}\\
1& q_2 &  q_2^{2}
\end{vmatrix}}\\
&={\frac{p_1 q_1-p_2 q_2}{p_1
p_2 q_1 q_2 (q_1-q_2)}}.
\end{align*}
Putting everything together, we get
\begin{align*}
X_2&=-\frac{\begin{vmatrix}
0 & {1} & 0\\
1& q_1 & p_1^{-1} q_1^{-1}\\
1& q_2 & p_2^{-1} q_2^{-1}
\end{vmatrix}}{\begin{vmatrix}
0 & 1 & 0\\
1& q_1 & p_1^{-1} \\
1& q_2 & p_2^{-1} 
\end{vmatrix}}\frac{\begin{vmatrix}
0 & 0&  1\\
1& q_1 &  q_1^{2}\\
1& q_2 &  q_2^{2}
\end{vmatrix}}{\begin{vmatrix}
0 & 0 & \frac{1}{C_\beta}\\
1& q_1 & p_1^{-1} q_1^{-1}\\
1& q_2 & p_2^{-1} q_2^{-1}
\end{vmatrix}}\frac{\begin{vmatrix}
1 & 0 & 0\\
1& q_1 & p_1^{-1} \\
1& q_2 & p_2^{-1} 
\end{vmatrix}}{\begin{vmatrix}
1 & 0  &  0\\
1& q_1 &  q_1^{2}\\
1& q_2 &  q_2^{2}
\end{vmatrix}}\\
&={\frac{C_\beta (p_1 q_1-p_2 q_2)^2}{p_1
		p_2 q_1^2 q_2^2 (p_1-p_2)
		(q_1-q_2)}},
\end{align*}
{with similar formulas for $X_1,X_3,X_4,A,B$. It may be easily verified that these invert the spectral transform
by} plugging in the formulas (\ref{cashexx}) and (\ref{eq:pqpq}) into the right-hand side and simplifying using computer algebra.

\subsection{Non-primitive example} 

\begin{figure}
\centering

	\begin{tikzpicture}  
	\begin{scope}
		\draw[dashed, gray] (0,0) rectangle (8,8);
\coordinate[bvert] (b1) at (2,7);
			\coordinate[bvert] (b2) at (2,5);
			\coordinate[bvert] (b3) at (5,6);
			\coordinate[bvert] (b4) at (7,6);
			\coordinate[bvert] (b5) at (1,2);
			\coordinate[bvert] (b6) at (3,2);
			\coordinate[bvert] (b7) at (6,3);
			\coordinate[bvert] (b8) at (6,1);
			\coordinate[wvert] (w1) at (1,6);
			\coordinate[wvert] (w2) at (3,6);
			\coordinate[wvert] (w3) at (6,7);
			\coordinate[wvert] (w4) at (6,5);
			\coordinate[wvert] (w5) at (2,3);
			\coordinate[wvert] (w6) at (2,1);
			\coordinate[wvert] (w7) at (5,2);
			\coordinate[wvert] (w8) at (7,2);

\draw[] (0,2)--(b5)--(w5)--(b6)--(w7)--(b7)--(w8)--(8,2)
(b5)--(w6)--(b6)
(w7)--(b8)--(w8)
(w6)--(2,0)
(b8)--(6,0)
(w5)--(b2)--(w2)--(b3)--(w4)--(b7)
(w4)--(b4)--(8,6)
(b4)--(w3)--(b3)
(0,6)--(w1)--(b2)
(w1)--(b1)--(w2)
(b1)--(2,8)
(w3)--(6,8)
;
\draw[orange!70!yellow,very thick,<-] (0,6)to[out=-27,in=207](4,6)to[out=27,in=180-27](8,6);
\draw[green,very thick,->] (2,8)to[out=-90-27,in=117](2,4)to[out=-90+27,in=90-27](2,0);
\draw[blue,very thick,<-] (2,8)to[out=-90+27,in=90-27](2,4)to[out=-90-27,in=90+27](2,0);
\draw[red,very thick,->] (0,6)to[out=27,in=180-27](4,6)to[out=-27,in=180+27](8,6);

\draw[red,very thick,->] (0,2)to[out=-27,in=207](4,2)to[out=27,in=180-27](8,2);
\draw[blue,very thick,<-] (6,8)to[out=-90-27,in=117](6,4)to[out=-90+27,in=90-27](6,0);
\draw[green,very thick,->] (6,8)to[out=-90+27,in=90-27](6,4)to[out=-90-27,in=90+27](6,0);
\draw[orange!70!yellow,very thick,<-] (0,2)to[out=27,in=180-27](4,2)to[out=-27,in=180+27](8,2);

	\node[](no) at (2.5,0.5) {$\gamma_1$};
		\node[](no) at (1.5,0.5) {$\alpha_1$};
			\node[](no) at (0.5,2.5) {$\beta_1$};
			\node[](no) at (0.5,1.5) {$\delta_1$};
		\node[](no) at (6.5,0.5) {$\alpha_2$};
		\node[](no) at (5.5,0.5) {$\gamma_2$};
			\node[](no) at (0.5,5.5) {$\beta_2$};	\node[](no) at (0.5,6.5) {$\delta_2$};

\end{scope}
\begin{scope}[shift={(11,6)},scale=0.7]
\draw[red,very thick,->] (-1,-1)--(0,-1);
\draw[red,very thick,->] (0,-1)--(1,-1);
\draw[green,very thick,->] (-1,1)--(-1,0);
\draw[green,very thick,->] (-1,0)--(-1,-1);
\draw[blue,very thick,->] (1,-1)--(1,0);
\draw[blue,very thick,->] (1,0)--(1,1);
\draw[orange!70!yellow,very thick,->] (1,1)--(0,1);
\draw[orange!70!yellow,very thick,->] (0,1)--(-1,1);
	\draw[fill=black] (0,0) circle (4pt);
		
		\draw[fill=black] (0,1) circle (2pt);
		\draw[fill=black] (1,0) circle (2pt);
		\draw[fill=black] (0,-1) circle (2pt);
			\draw[fill=black] (-1,0) circle (2pt);
			\draw[fill=black] (1,1) circle (2pt);
		\draw[fill=black] (1,-1) circle (2pt);
		\draw[fill=black] (-1,-1) circle (2pt);
			\draw[fill=black] (-1,1) circle (2pt);
				\node[](no) at (0,-2.5) {$N$};
\node[](no) at (2.5,0) {$[\alpha_1]+[\alpha_2]$};		
\node[](no) at (-2.5,0) {$[\gamma_1]+[\gamma_2]$};			
\node[](no) at (0,1.5) {$[\beta_1]+[\beta_2]$};	
\node[](no) at (0,-1.5) {$[\delta_1]+[\delta_2]$};			
\end{scope}	

\begin{scope}[shift={(11,2)},scale=0.7]

\draw[red,very thick,->] (0,0)--(0,1);
\draw[green,very thick,->] (0,0)--(1,0);
\draw[blue,very thick,->] (0,0)--(-1,0);
\draw[orange!70!yellow,very thick,->] (0,0)--(0,-1);
	\draw[fill=black] (0,0) circle (2pt);
	\node[](no) at (1.5,0) {$\sigma_\gamma$};
		\node[](no) at (-1.5,0) {$\sigma_\alpha$};
			\node[](no) at (0,-1.5) {$\sigma_\beta$};	\node[](no) at (0,1.5) {$\sigma_\delta$};
			\node[](no) at (0,-2.5) {$\Sigma$};
\end{scope}	
\end{tikzpicture}
\caption{A square-octagon graph, its Newton polygon {$N$} and normal fan {$\Sigma$}, with zig-zag paths and rays labeled.}\label{fig:sqoct}
\end{figure}
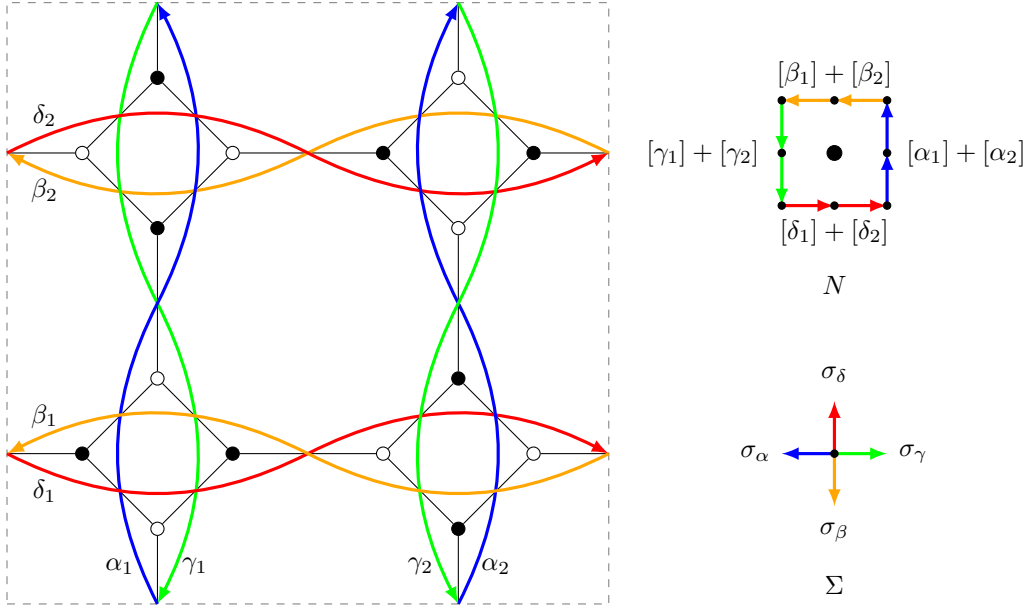

Consider the square-octagon graph $\Gamma$ with Newton polygon $N$ and {normal} fan $\Sigma$ as shown in Figure \ref{fig:sqoct}. {
	We label the vertices of $\Gamma$ as in Figure \ref{fig:damsqoct}.} {We label the rays of $\Sigma$ by $\sigma_\alpha,\sigma_\beta,\sigma_\gamma,\sigma_\delta$ and the two zig-zag paths dual to ray $\sigma_\tau$ by $\{\tau_1,\tau_2\}$, for $\tau \in \{\alpha,\beta,\gamma,\delta\}$. }

We can take $X_i:=[wt]([\partial f_i]), i=1,\dots,7$, and $A:=[wt]([a]), B:=[wt]([b])$ as coordinates on $H^1(\Gamma,\C)$ (see Figure~\ref{fig:damsqoct}). The Casimirs are
\begin{align*}
    C_{\alpha_1}&=X_1 X_3 X_7 B, & C_{\alpha_2}&=\frac{B {X_2} {X_3} {X_4} {X_6} {X_7}}{{X_1} {X_5}}, &
    C_{\beta_1}&= \frac{X_2}{A X_1 X_5},& C_{\beta_2}&= \frac{1}{A X_7},\\
    C_{\gamma_1}&= \frac{X_5}{B X_1 X_3},& C_{\gamma_2}&= \frac{X_6}{B},&
    C_{\delta_1}&=\frac{A X_1}{X_2 X_6},& C_{\delta_2}&= \frac{A X_1 X_5}{X_2 X_3 X_4 X_6 X_7}.
\end{align*}
Since the Newton polygon $N$ has only one interior lattice point, the divisor $S=(p,q)$ consists of a single point. The Kasteleyn matrix is 
\[
K=\begin{pmatrix}
 1 & 1 & 0 & A z & 0 & 0 & 0 & 0 \\
 1 & -{X_7} & 1 & 0 & 0 & 0 & 0 & 0 \\
 0 & 0 & 1 & 1 & 0 & 0 & 0 & \frac{1}{B w} \\
 0 & 0 & \frac{{X_1} {X_5}}{{X_2} {X_3} {X_4} {X_6} {X_7}} & -1 & 0 & 0 & 1 & 0 \\
 0 & \frac{1}{{X_3}} & 0 & 0 & 1 & \frac{1}{{X_5}} & 0 & 0 \\
 B w {X_1} & 0 & 0 & 0 & 1 & -1 & 0 & 0 \\
 0 & 0 & 0 & 0 & 0 & \frac{{X_1}}{{X_2}} & {X_6} & 1 \\
 0 & 0 & 0 & 0 & \frac{1}{A z} & 0 & 1 & -1 \\
\end{pmatrix}
.\]
Let $P(z,w)=\det K$ and $\spectralcurve=\overline{\{P(z,w)=0\}}$.
The spectral transform is $\kappa_{\Gamma,{\bf w}}=(\spectralcurve,S,\nu) \in \mathcal S_N$, where 
\begin{align*}
    p&=-\frac{{X_2} {X_4} {X_6} \left({X_3} {X_5} {X_6} {X_7} \left({X_1}^2 ({X_4}+1)+{X_2}
   {X_4}\right)+{X_1} {X_2} {X_3}^2 {X_4} {X_6}^2 {X_7}^2+{X_1} {X_5}^2\right)}{A ({X_1}
   {X_5}+{X_2} {X_3} {X_4} {X_6} {X_7}) }\\
 &~~~~~\times \frac{1}{\left({X_3} {X_4} {X_6} {X_7} \left({X_1}^2
   {X_5}+{X_2} ({X_5}+1) ({X_6}+1)\right)+{X_1} {X_5} ({X_6}
   ({X_4}+{X_5}+1)+{X_5}+1)\right)},\\
   q&=\frac{{X_5} \left(-{X_3} {X_4} {X_6} {X_7} \left({X_1}^2+{X_2} {X_6}+{X_2}\right)-{X_1}
   {X_5} ({X_6}+1)\right)}{B {X_1} {X_3} {X_7} ({X_1} {X_5} ({X_4}
   {X_6}+{X_6}+1)+{X_2} {X_3} {X_4} {X_6} ({X_6}+1) {X_7})}.
\end{align*}
The table below lists the points at infinity for each of the zig-zag paths:

{
	\be \def\arraystretch{1.5}
	\begin{array}{|cccc|}
		\hline
		\text{Zig-zag path} & \text{Homology class} & \text{Basis $x_1,x_2$} & \text{Point at infinity}\\
\hline
		\alpha_1 & \multirow{2}{*}{(0,1)} &\multirow{2}{*}{ (0,1),(-1,0)}&x_1=\frac{1}{C_{\alpha_1}},x_2=0 \\ 
		\alpha_2 & & & x_1=\frac{1}{C_{\alpha_2}},x_2=0  \\[0.5ex]
		\hline
		\beta_1 & \multirow{2}{*}{(-1,0)} &  \multirow{2}{*}{(-1,0),(0,-1)}&x_1=\frac{1}{C_{\beta_1}},x_2=0  \\
		\beta_2 & &  &x_1=\frac{1}{C_{\beta_2}},x_2=0 \\[0.5ex]
		\hline
	
		\gamma_1 & \multirow{2}{*}{(0,-1)}&  \multirow{2}{*}{(0,-1),(1,0)}&x_1=\frac{1}{C_{\gamma_1}},x_2=0  \\
		\gamma_2 & &  &x_1=\frac{1}{C_{\gamma_2}},x_2=0   \\[0.5ex]
		\hline

		\delta_1 & \multirow{2}{*}{(1,0)} &  \multirow{2}{*}{(1,0),(0,1)}&x_1=\frac{1}{C_{\delta_1}},x_2=0   \\
		\delta_2 &  &  &x_1=\frac{1}{C_{\delta_2}},x_2=0  \\[0.5ex]
		\hline
	\end{array} \la{zzpathtable2}
	\ee
}

\begin{figure}
\centering

	\begin{tikzpicture}  
	\begin{scope}
		\draw[dashed, gray] (0,0) rectangle (8,8);
	
		    \coordinate[bvert,label=below:${\bw_1}$] (b1) at (2,7);
			\coordinate[bvert,label=above:${\bw_2}$] (b2) at (2,5);
			\coordinate[bvert,label=right:${\bw_3}$] (b3) at (5,6);
			\coordinate[bvert,label=left:${\bw_4}$] (b4) at (7,6);
			\coordinate[bvert,label=right:${\bw_5}$] (b5) at (1,2);
			\coordinate[bvert,label=left:${\bw_6}$] (b6) at (3,2);
			\coordinate[bvert,label=below:${\bw_7}$] (b7) at (6,3);
			\coordinate[bvert,label=above:${\bw_8}$] (b8) at (6,1);
			
			\coordinate[wvert,label=below:${\bf w}$] (w1) at (1,6);	
		    
			\coordinate[wvert,label=above:${\w_2}$] (w2) at (3,6);
			\coordinate[wvert] (w3) at (6,7);
			\coordinate[wvert] (w4) at (6,5);
			\coordinate[wvert] (w5) at (2,3);
			\coordinate[wvert] (w6) at (2,1);
			\coordinate[wvert] (w7) at (5,2);
			\coordinate[wvert] (w8) at (7,2);

\draw[] (0,2)--(b5)--(w5)--node[right]{$\frac{1 }{X_5}$}(b6)--node[below]{$\frac{X_1 }{X_2}$}(w7)--node[left]{$X_6$}(b7)--(w8)--node[above]{$\frac{1}{Az}$}(8,2)
(b5)--(w6)--node[right]{$-1$}(b6)
(w7)--(b8)--node[right]{$-1$}(w8)
(w6)--(2,0)
(b8)--(6,0)
(w5)--node[left]{$\frac{1}{X_3}$}(b2)--node[right]{$-X_7$}(w2)--(b3)--node[left]{$U$}(w4)--(b7)
(w4)--node[right]{$-1$}(b4)--node[above]{$Az$}(8,6)
(b4)--(w3)--(b3)
(0,6)--(w1)--(b2)
(w1)--(b1)--(w2)
(b1)--node[left]{$X_1 B w$}(2,8)
(w3)--node[left]{$\frac{1}{Bw}$}(6,8)
;
\draw[red,very thick,->] (0,6)--(w1);
\draw[red,very thick,->] (w1)--(b1);
\draw[red,very thick,->] (b1)--(w2);
\draw[red,very thick,->] (w2)--(b3);
\draw[red,very thick,->] (b3)--(w3);
\draw[red,very thick,->] (w3.325)--(b4.125);
\draw[red,very thick,->] (b4)--(8,6);

\draw[green,very thick,->] (6,0)--(b8);
\draw[green,very thick,->] (b8)--(w8);
\draw[green,very thick,->] (w8)--(b7);
\draw[green,very thick,->] (b7)--(w4);
\draw[green,very thick,->] (w4)--(b4);
\draw[green,very thick,->] (b4.145)--(w3.305);
\draw[green,very thick,->] (w3)--(6,8);
\node[](no) at (6,-0.5){$b$};
\node[](no) at (-0.5,6){$a$};
\node[blue](no) at (0,0) {$f_1$};
\node[blue](no) at (4,0) {$f_2$};
\node[blue](no) at (0,4) {$f_3$};
\node[blue](no) at (4,4) {$f_4$};
\node[blue](no) at (2,2) {$f_5$};
\node[blue](no) at (6,2) {$f_6$};
\node[blue](no) at (2,6) {$f_7$};
\node[blue](no) at (6,6) {$f_8$};

\end{scope}

\end{tikzpicture}
\caption{Labeling of the vertices and faces of $\Gamma$, and a cocycle and Kasteleyn sign, where $X_i=[wt]([\partial f_i]),A=[wt]([a]),B=[wt](b)$ and $U=\frac{X_1 X_5}{X_2X_3X_4X_6 X_7}$. The edges with no weight indicated have weight $1$.}\label{fig:damsqoct}
\end{figure}
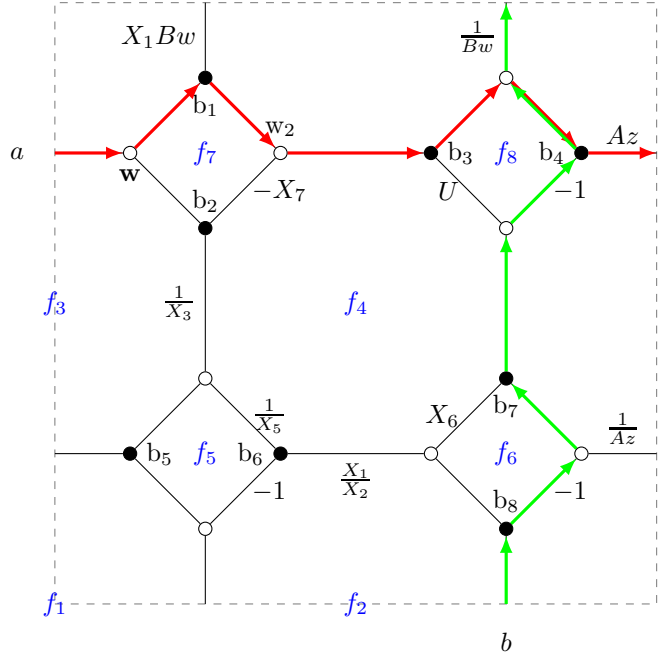

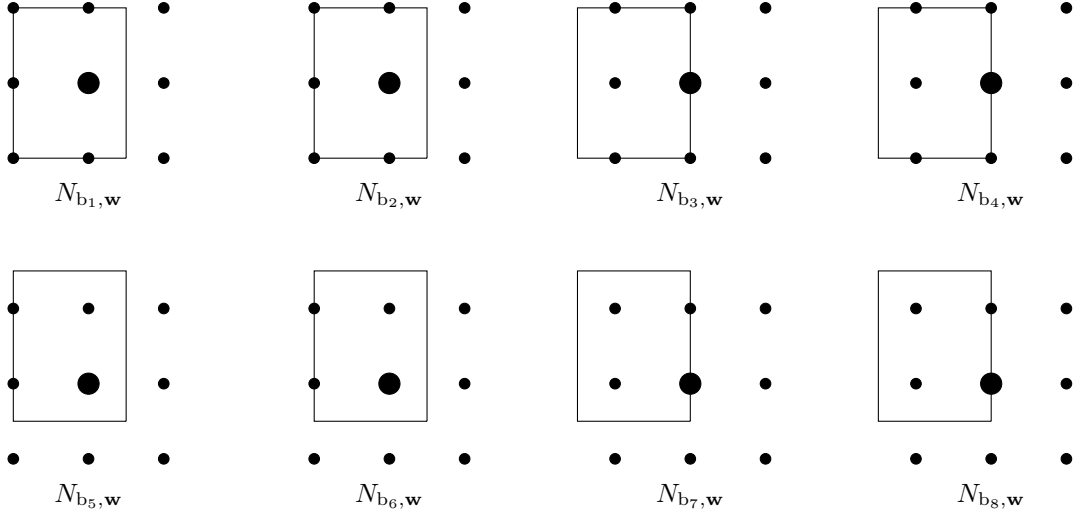
\begin{figure}
\centering

	\begin{tikzpicture}  
\begin{scope}[shift={(10,6)}]
\draw[](0.5,-1)--(0.5,1)--(-1,1)--(-1,-1)--(0.5,-1);
	\draw[fill=black] (0,0) circle (4pt);
		
		\draw[fill=black] (0,1) circle (2pt);
		\draw[fill=black] (1,0) circle (2pt);
		\draw[fill=black] (0,-1) circle (2pt);
			\draw[fill=black] (-1,0) circle (2pt);
			\draw[fill=black] (1,1) circle (2pt);
		\draw[fill=black] (1,-1) circle (2pt);
		\draw[fill=black] (-1,-1) circle (2pt);
			\draw[fill=black] (-1,1) circle (2pt);
			\node[](no) at (0,-1.5){$N_{\bw_1,{\bf w}}$};
\end{scope}

\begin{scope}[shift={(14,6)}]
\draw[](0.5,-1)--(0.5,1)--(-1,1)--(-1,-1)--(0.5,-1);
	\draw[fill=black] (0,0) circle (4pt);
		
		\draw[fill=black] (0,1) circle (2pt);
		\draw[fill=black] (1,0) circle (2pt);
		\draw[fill=black] (0,-1) circle (2pt);
			\draw[fill=black] (-1,0) circle (2pt);
			\draw[fill=black] (1,1) circle (2pt);
		\draw[fill=black] (1,-1) circle (2pt);
		\draw[fill=black] (-1,-1) circle (2pt);
			\draw[fill=black] (-1,1) circle (2pt);
			\node[](no) at (0,-1.5){$N_{\bw_2,{\bf w}}$};
\end{scope}	

\begin{scope}[shift={(18,6)}]
\draw[](0.5-0.5,-1)--(0.5-0.5,1)--(-1-0.5,1)--(-1-0.5,-1)--(0.5-0.5,-1);
	\draw[fill=black] (0,0) circle (4pt);
		
		\draw[fill=black] (0,1) circle (2pt);
		\draw[fill=black] (1,0) circle (2pt);
		\draw[fill=black] (0,-1) circle (2pt);
			\draw[fill=black] (-1,0) circle (2pt);
			\draw[fill=black] (1,1) circle (2pt);
		\draw[fill=black] (1,-1) circle (2pt);
		\draw[fill=black] (-1,-1) circle (2pt);
			\draw[fill=black] (-1,1) circle (2pt);
			\node[](no) at (0,-1.5){$N_{\bw_3,{\bf w}}$};
\end{scope}	

\begin{scope}[shift={(22,6)}]
\draw[](0.5-0.5,-1)--(0.5-0.5,1)--(-1-0.5,1)--(-1-0.5,-1)--(0.5-0.5,-1);
	\draw[fill=black] (0,0) circle (4pt);
		
		\draw[fill=black] (0,1) circle (2pt);
		\draw[fill=black] (1,0) circle (2pt);
		\draw[fill=black] (0,-1) circle (2pt);
			\draw[fill=black] (-1,0) circle (2pt);
			\draw[fill=black] (1,1) circle (2pt);
		\draw[fill=black] (1,-1) circle (2pt);
		\draw[fill=black] (-1,-1) circle (2pt);
			\draw[fill=black] (-1,1) circle (2pt);
			\node[](no) at (0,-1.5){$N_{\bw_4,{\bf w}}$};
\end{scope}	
\begin{scope}[shift={(10 ,2)}]
\draw[](.5,-0.5)--(.5,1.5)--(-1,1.5)--(-1,-0.5)--(.5,-0.5);
	\draw[fill=black] (0,0) circle (4pt);
		
		\draw[fill=black] (0,1) circle (2pt);
		\draw[fill=black] (1,0) circle (2pt);
		\draw[fill=black] (0,-1) circle (2pt);
			\draw[fill=black] (-1,0) circle (2pt);
			\draw[fill=black] (1,1) circle (2pt);
		\draw[fill=black] (1,-1) circle (2pt);
		\draw[fill=black] (-1,-1) circle (2pt);
			\draw[fill=black] (-1,1) circle (2pt);
			\node[](no) at (0,-1.5){$N_{\bw_5,{\bf w}}$};
\end{scope}	
\begin{scope}[shift={(14 ,2)}]
\draw[](.5,-0.5)--(.5,1.5)--(-1,1.5)--(-1,-0.5)--(.5,-0.5);
	\draw[fill=black] (0,0) circle (4pt);
		
		\draw[fill=black] (0,1) circle (2pt);
		\draw[fill=black] (1,0) circle (2pt);
		\draw[fill=black] (0,-1) circle (2pt);
			\draw[fill=black] (-1,0) circle (2pt);
			\draw[fill=black] (1,1) circle (2pt);
		\draw[fill=black] (1,-1) circle (2pt);
		\draw[fill=black] (-1,-1) circle (2pt);
			\draw[fill=black] (-1,1) circle (2pt);
			\node[](no) at (0,-1.5){$N_{\bw_6,{\bf w}}$};
\end{scope}	

\begin{scope}[shift={(18 ,2)}]
\draw[](0,-0.5)--(0,1.5)--(-1.5,1.5)--(-1.5,-0.5)--(0,-0.5);
	\draw[fill=black] (0,0) circle (4pt);
		
		\draw[fill=black] (0,1) circle (2pt);
		\draw[fill=black] (1,0) circle (2pt);
		\draw[fill=black] (0,-1) circle (2pt);
			\draw[fill=black] (-1,0) circle (2pt);
			\draw[fill=black] (1,1) circle (2pt);
		\draw[fill=black] (1,-1) circle (2pt);
		\draw[fill=black] (-1,-1) circle (2pt);
			\draw[fill=black] (-1,1) circle (2pt);
			\node[](no) at (0,-1.5){$N_{\bw_7,{\bf w}}$};
\end{scope}	
\begin{scope}[shift={(22 ,2)}]
\draw[](0,-0.5)--(0,1.5)--(-1.5,1.5)--(-1.5,-0.5)--(0,-0.5);
	\draw[fill=black] (0,0) circle (4pt);
		
		\draw[fill=black] (0,1) circle (2pt);
		\draw[fill=black] (1,0) circle (2pt);
		\draw[fill=black] (0,-1) circle (2pt);
			\draw[fill=black] (-1,0) circle (2pt);
			\draw[fill=black] (1,1) circle (2pt);
		\draw[fill=black] (1,-1) circle (2pt);
		\draw[fill=black] (-1,-1) circle (2pt);
			\draw[fill=black] (-1,1) circle (2pt);
			\node[](no) at (0,-1.5){$N_{\bw_8,{\bf w}}$};
\end{scope}	
\end{tikzpicture}
\caption{The small polygons for the square-octagon graph.}\label{fig:sqoctsp}
\end{figure}
The discrete Abel map $\dd$ is given by $ \dd({\bf w})=0$ and
\begin{align*}
       \dd(\bw_1)&=\frac 1 2 D_\gamma+\frac 1 2 D_\delta,&
    \dd(\bw_2)&=\frac 1 2 D_\beta+\frac 1 2 D_\gamma,\\
    \dd(\bw_3)&=\frac 1 2(- D_\alpha+D_\beta+D_\gamma+D_\delta),&
    \dd(\bw_4)&=-D_\alpha+\frac 1 2 D_\beta+D_\gamma+\frac 1 2 D_\delta \\
    \dd(\bw_5)&=D_\beta,&
    \dd(\bw_6)&=-\frac 1 2 D_\alpha+ D_\beta+\frac 1 2 D_\gamma \\
    \dd(\bw_7)&=-\frac 1 2 D_\alpha+\frac 1 2 D_\beta+D_\gamma,&
    \dd(\bw_8)&=-\frac 1 2 D_\alpha+ D_\beta+D_\gamma-\frac 1 2 D_\delta.
\end{align*}
We have $D_N=D_\alpha+D_\beta+D_\gamma+D_\delta$, using which we compute
\begin{align*}
    \divebw_{\bw_1 {\bf w}}&=\frac 1 2 D_\alpha+ D_\beta+D_\gamma+D_\delta, &
    \divebw_{\bw_2 {\bf w}}&=\frac 1 2 D_\alpha+ D_\beta+D_\gamma+D_\delta,\\
    \divebw_{\bw_3 {\bf w}}&=D_\beta+\frac 3 2 D_\gamma+ D_\delta, &
    \divebw_{\bw_4 {\bf w}}&=D_\beta+\frac 3 2 D_\gamma+ D_\delta,\\
    \divebw_{\bw_5 {\bf w}}&=\frac 1 2 D_\alpha+\frac 3 2 D_\beta+ D_\gamma+\frac 1 2 D_\delta, &
    \divebw_{\bw_6 {\bf w}}&=\frac 1 2 D_\alpha+\frac 3 2 D_\beta+ D_\gamma+\frac 1 2 D_\delta,\\
    \divebw_{\bw_7 {\bf w}}&=\frac 3 2 D_\beta+ \frac 3 2 D_\gamma+\frac 1 2 D_\delta,& 
    \divebw_{\bw_8 {\bf w}}&=\frac 3 2 D_\beta+ \frac 3 2 D_\gamma+\frac 1 2 D_\delta.
\end{align*}
The corresponding small polygons are shown in Figure \ref{fig:sqoctsp}. Therefore, we have 
\[
{\rm V}_{\bw_1 {\bf w}}= a_{(-1,-1)}z^{-1}w^{-1}+a_{(0,-1)} w^{-1}+a_{(1,-1)}zw^{-1}+a_{(-1,0)}z^{-1}+a_{(0,0)}+a_{(1,0)} z,
\]
where the $a_{m}$ satisfy the system of equations $\mathbb V_{\bw_1 {\bf w}}$ that we now determine. We have the equation of type 1:
\[
a_{(-1,-1)}p^{-1}q^{-1}+a_{(0,-1)} q^{-1}+a_{(1,-1)}pq^{-1}+a_{(-1,0)}p^{-1}+a_{(0,0)}+a_{(1,0)} p
=0.
\]
{
	To find the zig-zag paths that contribute equations of type 2, we compute (\ref{contpointsatinf}). We have
\begin{align*}
	-\restr{D_N}{\spectralcurve}&= \nu(\alpha_1) + \nu(\alpha_2) +\nu(\beta_1) + \nu(\beta_2) + \nu(\gamma_1) + \nu(\gamma_2) + \nu(\delta_1) + \nu(\delta_2),\\
	{\bf d}({\bf w})-{\bf d}(\bw_1) &=-\nu(\gamma_1) - \nu(\delta_2),\\
	\restr{\floor{\divebw_{\bw_1 {\bf w}}}}{\spectralcurve}&= \nu(\beta_1) + \nu(\beta_2) + \nu(\gamma_1) + \nu(\gamma_2) + \nu(\delta_1) + \nu(\delta_2),
\end{align*}
using which we get that (\ref{contpointsatinf}) is equal to $\nu(\beta_1)+\nu(\beta_2)+\nu(\gamma_2)+\nu(\delta_1) $, so we have four equations of type 2, one for each of the zig-zag paths $\beta_1,\beta_2,\gamma_2,\delta_1$.	
}

Therefore, we have $5$ equations and $6$ variables, so we are in the setting of Remark \ref{remind} where ${\rm V}_{\bw_1 {\bf w}}=\det\mathbb {V}^\chi_{\bw_1 {\bf w}}$. Computing the equations of type 2, we get
\[
{\rm V}_{\bw_1 {\bf w}}=\begin{vmatrix}
z^{-1}w^{-1}&w^{-1}&zw^{-1}&z^{-1}&1&z\\
p^{-1}q^{-1}&q^{-1}&pq^{-1}&p^{-1}&1&p\\
1&C_{\beta_1}&0&0&0&0\\
1&C_{\beta_2}&0&0&0&0\\
C_{\gamma_2}&0&1&0&C_{\gamma_2}^{-1}&0\\
0&0&0&0&C_{\delta_1}&1
\end{vmatrix}.
\]
In like fashion, {for ${\rm V}_{\bw_2 {\bf w}}$, we have an equation of type 1 and four equations of type 2 for the zig-zag paths $\beta_1, \gamma_2, \delta_1, \delta_2$}. We compute
\[
{\rm V}_{\bw_2 {\bf w}}=\begin{vmatrix}
z^{-1}w&w&z^{-1}&1&z^{-1}w^{-1}&w^{-1}\\
p^{-1}q&q&p^{-1}&1&p^{-1}q^{-1}&q^{-1}\\
1&C_{\beta_1}&0&0&0&0\\
C_{\gamma_2}&0&1&0&C_{\gamma_2}^{-1}&0\\
0&0&0&0&C_{\delta_1}&1\\
0&0&0&0&C_{\delta_2}&1
\end{vmatrix}.
\]
We write the boundary of the face $f_7$ as the concatenation of the two wedges {$W_1$ and $W_2$ represented by
$({\bf w},\gamma_1)$ and $(\bw_2,\alpha_1)$ respectively}. We have
\begin{align*}
    r_{W_1}&=\frac{{\rm V}_{\bw_2 {\bf w}}}{{\rm V}_{\bw_1 {\bf w}}}(\nu(\gamma_1))=\frac{\begin{vmatrix}
C_{\gamma_1}&0&1&0&C_{\gamma_1}^{-1}&0\\
p^{-1}q&q&p^{-1}&1&p^{-1}q^{-1}&q^{-1}\\
1&C_{\beta_1}&0&0&0&0\\
C_{\gamma_2}&0&1&0&C_{\gamma_2}^{-1}&0\\
0&0&0&0&C_{\delta_1}&1\\
0&0&0&0&C_{\delta_2}&1
\end{vmatrix}}{\begin{vmatrix}
C_{\gamma_1}&0&1&0&C_{\gamma_1}^{-1}&0\\
p^{-1}q^{-1}&q^{-1}&pq^{-1}&p^{-1}&1&p\\
1&C_{\beta_1}&0&0&0&0\\
1&C_{\beta_2}&0&0&0&0\\
C_{\gamma_2}&0&1&0&C_{\gamma_2}^{-1}&0\\
0&0&0&0&C_{\delta_1}&1
\end{vmatrix}},
\end{align*}
where to evaluate at $\nu(\gamma_1)$, we use the basis $x_1,x_2$ from table (\ref{zzpathtable2}). Similarly, we compute
\begin{align*}
r_{W_2}&=-\frac{{\rm V}_{\bw_1 {\bf w}}}{{\rm V}_{\bw_2 {\bf w}}}(\nu(\alpha_1))=-\frac{\begin{vmatrix}
0&C_{\alpha_1}^{-1}&0&1&0&C_{\alpha_1}\\
p^{-1}q^{-1}&q^{-1}&pq^{-1}&p^{-1}&1&p\\
1&C_{\beta_1}&0&0&0&0\\
1&C_{\beta_2}&0&0&0&0\\
C_{\gamma_2}&0&1&0&C_{\gamma_2}^{-1}&0\\
0&0&0&0&C_{\delta_1}&1
\end{vmatrix}}{\begin{vmatrix}
0&C_{\alpha_1}^{-1}&0&1&0&C_{\alpha_1}\\
p^{-1}q&q&p^{-1}&1&p^{-1}q^{-1}&q^{-1}\\
1&C_{\beta_1}&0&0&0&0\\
C_{\gamma_2}&0&1&0&C_{\gamma_2}^{-1}&0\\
0&0&0&0&C_{\delta_1}&1\\
0&0&0&0&C_{\delta_2}&1
\end{vmatrix}}.
\end{align*}
It can be verified using computer algebra that $X_7=r_{W_1} r_{W_2}$.
\section{The small polygons}\label{smallpolysection}
In the remaining sections, we prove the results stated in Section \ref{sec2}. In order to invert the spectral transform, we want to first reconstruct the $Q_{{\bw} {\bf w}}$, the entries {of the ${\bf w}$-column} of the
adjugate matrix, from the spectral data. To do this, we need to first find the Newton polygon of the $Q_{{\bw} {\bf w}}$, which we call the small polygons and denote by $N_{{\bw} {\bf w}}$. Explicitly, $N_{{\bw} {\bf w}}$ is the convex hull of homology classes of dimer covers of $\Gamma-\{{\bw,{\bf w}}\}$. However, it appears difficult to describe $N_{{\bw} {\bf w}}$ in a direct combinatorial way. Instead, we will re-express the problem in terms of toric geometry. The key to doing this is an extension of the Kasteleyn matrix, which is a map of trivial sheaves on $\rm T$, to a map of locally free sheaves on a compactification of $\rm T$. We are led to consider a stacky toric surface $\mathscr X_N$ instead of the toric surface $X_N$, because such an extension does not exist on $X_N$ unless the polygon has only primitive sides.

The basics of stacky toric surfaces are recalled in detail in Appendix~\ref{A3}. For the convenience of the reader we reproduce some notation. 

Let $\Sigma$ be the normal fan of $N$. There is a \textit{stacky fan} $\boldsymbol \Sigma=(\Sigma,\beta)$ where
\begin{align*}
    \beta:\mathbb Z^{\Sigma(1)} &\ra {\rm M}^\vee,\\
    \delta_\rho &\mapsto |E_\rho| u_\rho,
\end{align*}
where $u_\rho$ is the primitive normal to $E_\rho$. We identify the set of rays  $\Sigma(1)$  of the fan $\Sigma$ with  the components $D_\rho$ of the divisor at infinity
$$
\rho \leftrightarrow \tau_{\rho}=\R_{\geq 0}u_\rho.
$$
We assign to $\boldsymbol{\Sigma}$ a smooth {\it toric {Deligne-Mumford} stack $\mathscr X_{N}$}, which contains the torus ${\rm T}$ as a dense open subset. 

 We consider the stack rather than the toric surface since we  construct an extension of the Kasteleyn operator to a compactification of the torus ${\rm T}$ in Lemma~\ref{main::lem}. There is no such extension on the toric surface when the Newton polygon is not simple, but there is one on the stack.

\subsection{Extension of the Kasteleyn operator}
Define for each black vertex $\bw$ the line bundle 
\[
\mathcal E_{\bw}:=\mathcal O_{\mathscr X_N}\Bigl({\dd}(\bw)-\sum_{\rho \in \Sigma(1)}\sum_{\alpha \in Z_\rho : \bw \in \alpha } \frac{1}{|E_{\rho}|}D_{\rho} \Bigr),
\]
and for each white vertex $\w$, the line bundle 
\[
\mathcal F_{\w}:=\mathcal O_{\mathscr X_N}({\dd}(\w)).
\]
Let 
$$
\mathcal E:=\bigoplus_{\bw \in B} \mathcal E_\bw, \quad   \quad \mathcal F:=\bigoplus_{\w \in W} \mathcal F_\w.
$$ 
They   are locally free sheaves of the same rank $\# B=\#W$ on $\mathscr X_N$.
\begin{proposition}\label{main::lem}
The Kasteleyn operator $K$ extends to a map of locally free sheaves on $\mathscr X_N$:
\be \la{MK}
\widetilde{K}: \mathcal E \ra \mathcal F.
\ee
\end{proposition}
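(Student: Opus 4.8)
The plan is to verify the extension locally on each affine chart $U_\rho$ of the stacky toric surface $\mathscr X_N$ (plus the open torus $\mathrm T$ itself, where there is nothing to check since $K$ is already a map of trivial sheaves), and then to observe that the local extensions automatically glue because they are all restrictions of the same collection of Laurent polynomials $K_{\w,\bw}$. Concretely, I would fix an edge $e = \bw\w$ of $\Gamma$ and a ray $\rho\in\Sigma(1)$, and show that the Laurent polynomial $K_{\w,\bw}$, viewed as a section of $\mathcal O_{\mathrm T}$, extends over $D_\rho$ to a section of $\mathcal H\!om(\mathcal E_{\bw},\mathcal F_{\w}) = \mathcal F_\w\otimes\mathcal E_\bw^{-1}$. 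Unwinding the definitions of $\mathcal E_\bw$ and $\mathcal F_\w$, this line bundle is
\[
\mathcal O_{\mathscr X_N}\!\Bigl(\dd(\w)-\dd(\bw)+\sum_{\rho\in\Sigma(1)}\sum_{\alpha\in Z_\rho:\,\bw\in\alpha}\tfrac{1}{|E_\rho|}D_\rho\Bigr),
\]
so the required condition is that for every monomial $\chi^m$ appearing in $K_{\w,\bw}$ one has
\[
\div\chi^m + \dd(\w)-\dd(\bw)+\sum_{\rho}\sum_{\alpha\in Z_\rho:\,\bw\in\alpha}\tfrac{1}{|E_\rho|}D_\rho \;\geq\; 0
\]
as a $\Q$-divisor at infinity on $\mathscr X_N$ (here I use the formula (\ref{divchar}) for $\div\chi^m$ referenced after Lemma~\ref{lem:DAM}). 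By the dictionary (\ref{equivalence}) between $\Q$-divisors at infinity and rational polytopes, this is equivalent to the statement that the Newton polygon of $K_{\w,\bw}$ is contained in the rational polytope attached to the divisor $\dd(\bw)-\dd(\w)-\sum_{\rho}\sum_{\alpha\in Z_\rho:\,\bw\in\alpha}\frac{1}{|E_\rho|}D_\rho$; equivalently, for every $\rho$, $\langle m,u_\rho\rangle$ is bounded below by the appropriate coefficient for every monomial $m$ in $K_{\w,\bw}$.

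The key computation, then, is purely combinatorial and local to a single ray $\rho$. Using the description of edge weights $\phi(e) = z^{(e,\gamma_w)_\T}w^{(e,-\gamma_z)_\T}$ from (\ref{edgeph}) and Lemma~\ref{lem:DAM}, which already tells us that $\dd(\w)-\dd(\bw) = -\frac{1}{|E_\sigma|}D_\sigma - \frac{1}{|E_\rho|}D_\rho - \div\phi(e)$ where $\alpha\in Z_\sigma$ and $\beta\in Z_\rho$ are the two zig-zag paths through $e$, I would reduce the inequality for the monomial $\phi(e)$ to the evident statement $\div\phi(e) + \div\phi(e)^{-1}\geq$ (a combination of $D_\sigma$ and $D_\rho$ with the right signs) — which holds because $\alpha$ and $\beta$ are precisely the two zig-zag paths through $\bw$ whose $D_\rho$-contributions were subtracted. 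For the remaining monomials of $K_{\w,\bw}$, i.e. those coming from the other edges incident to the pair $(\bw,\w)$ — note $\Gamma$ need not be simple so there may be several — I would argue that each such monomial differs from $\phi(e)$ by the class of a loop in $\Gamma$ passing through $\bw$ and $\w$, and that the intersection numbers of such a loop with the zig-zag paths in $Z_\rho$ can only improve (not worsen) the relevant lower bound on $\langle m, u_\rho\rangle$; this is where minimality of $\Gamma$ enters, guaranteeing that zig-zag paths behave like pseudolines so the counting is controlled. I would phrase this last point via the discrete Abel map: the bound is exactly the assertion that for all black $\bw'$ and white $\w$, $\deg\dd$ is compatible with the degree count $\deg\mathbf d(\bw)=1$, $\deg\mathbf d(\w)=-1$ recorded in the excerpt.

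Finally, I would assemble these local statements: on each $U_\rho$ the matrix $\widetilde K$ is represented, in the chosen trivializations of $\mathcal E|_{U_\rho}$ and $\mathcal F|_{U_\rho}$, by an honest matrix of regular functions on $U_\rho$ (the stacky chart $[\,\mathbb A^1\times\C^\times / \mu_{|E_\rho|}]$, on which regular functions are Laurent polynomials with a one-sided bound), and on overlaps these matrices are conjugate by the transition functions of $\mathcal E$ and $\mathcal F$, hence patch to a global morphism $\widetilde K:\mathcal E\to\mathcal F$. Restricting to $\mathrm T$ recovers $K$ by construction. I expect the main obstacle to be the bookkeeping in the second paragraph: correctly accounting, for a non-simple Newton polygon, for all the monomials of $K_{\w,\bw}$ simultaneously against all rays $\rho$, and checking that the fractional coefficients $\frac{1}{|E_\rho|}$ (which is exactly why one must pass to the stack $\mathscr X_N$ rather than the surface $X_N$) make the floor/ceiling arithmetic come out right. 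The cohomology computation of \cite{BH09} is not needed for this proposition itself — only the classification of line bundles on $\mathscr X_N$ and the identification of their local sections — but it will be the tool for the degree and genus statements (Proposition~\ref{degl}, Corollary~\ref{cordeg}) that build on this extension.
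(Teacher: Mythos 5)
Your overall frame --- reduce, via the dictionary between $\Q$-divisors at infinity and rational polygons (Proposition \ref{pro:globalsec}, (\ref{equivalence})), to showing that each monomial $\phi(e)$ of $K_{\w\bw}$ satisfies $\div\phi(e)+\dd(\w)-\dd(\bw)+\sum_{\rho\in\Sigma(1)}\sum_{\alpha\in Z_\rho:\,\bw\in\alpha}\frac{1}{|E_\rho|}D_\rho\geq 0$ --- is exactly the paper's, and your treatment of the chosen edge $e$ (substitute Lemma \ref{lem:DAM} and cancel $\div\phi(e)$, leaving only the contributions of zig-zag paths through $\bw$ other than the two through $e$) matches the paper's computation. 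The local-chart-and-glue scaffolding is harmless but unnecessary: global sections of the relevant $\mathcal H om$ line bundle can be identified directly with the lattice points of the associated polygon, so there is nothing to patch.

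The genuine gap is in your handling of the remaining monomials when $\bw$ and $\w$ are joined by several edges. You propose to compare $\phi(e')$ with $\phi(e)$ via the homology class of the loop formed by $e$ and $e'$, and assert that its intersection numbers with the zig-zag paths in $Z_\rho$ ``can only improve'' the bound, appealing to minimality (pseudoline behavior) and to the degree counts $\deg{\bf d}(\bw)=1$, $\deg{\bf d}(\w)=-1$. Neither suffices: the degree of $\dd$ is the sum of all coefficients and says nothing about the individual coefficient at a fixed $D_\rho$, and the ``can only improve'' claim is precisely what needs proof, since the slack in the inequality changes from edge to edge because the two zig-zag paths through $e'$ differ from those through $e$. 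The paper closes this uniformly by noting that Lemma \ref{lem:DAM} is a statement about a single edge: for each edge $e'=\bw\w$, with its own zig-zag paths $\alpha'\in Z_{\sigma'}$, $\beta'\in Z_{\rho'}$, one has $\dd(\w)-\dd(\bw)=-\frac{1}{|E_{\sigma'}|}D_{\sigma'}-\frac{1}{|E_{\rho'}|}D_{\rho'}-\div\phi(e')$, and since $\alpha'$ and $\beta'$ both pass through $\bw$, the identical cancellation gives $\div\phi(e')+\dd(\w)-\dd(\bw)+\sum_{\tau}\sum_{\gamma\in Z_\tau:\,\bw\in\gamma}\frac{1}{|E_\tau|}D_\tau=\sum_{\tau}\sum_{\gamma\in Z_\tau:\,\bw\in\gamma,\ \gamma\neq\alpha',\beta'}\frac{1}{|E_\tau|}D_\tau\geq 0$. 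No loop comparison, no appeal to minimality beyond what Lemma \ref{lem:DAM} already encodes, and no floor arithmetic is needed for this proposition.
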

\begin{proof}
By definition, 
\[
 K_{\w \bw}=\sum_{e \in E(\Gamma)\text{ incident to } \bw,\w} wt(e) \epsilon(e)\phi(e).
\]
We need to show that for any edge $e$ with vertices $\bw,\w$, the character $\phi(e)$ is a global section of 
\[
\mathcal H om_{\mathscr X_N}(\mathcal E_\bw,\mathcal F_\w) \cong \mathcal O_{\mathscr X_N}\Bigl({\dd}(\w)-{\dd}(\bw)+\sum_{\rho \in \Sigma(1)}\sum_{\alpha \in Z_\rho : \bw \in \alpha } \frac{1}{|E_{\rho}|}D_{\rho} \Bigr).
\]
{Let $m \in {\rm M}$ be such that $\phi(e)=\chi^m$ and let $D:= {\dd}(\w)-{\dd}(\bw)+\sum_{\rho \in \Sigma(1)}\sum_{\alpha \in Z_\rho : \bw \in \alpha } \frac{1}{|E_{\rho}|}D_{\rho}$. By Proposition~\ref{pro:globalsec}, $\chi^m$ is a global section of the line bundle $\mathcal O_{\mathscr X_N}(D)$ if and only if $m \in P_D \cap {\rm M}$. Using~(\ref{divisorpolygonbijection}), this is equivalent to showing that for every edge $e=\bw \w$, we have 
\[
\div \phi(e)+{\dd}(\w)-{\dd}(\bw)+\sum_{\rho \in \Sigma(1)}\sum_{\alpha \in Z_\rho : \bw \in \alpha } \frac{1}{|E_{\rho}|}D_{\rho} \geq 0,
\]
where $\div \phi(e)=\sum_{\rho \in \Sigma(1)} \langle m, u_\rho \rangle D_\rho$ as in (\ref{divchar}).}

 Let $\alpha, \beta$ be the zig-zag paths through $e$, with $\alpha \in Z_\sigma, \beta \in Z_\rho$. Then by Lemma \ref{lem:DAM}, we have
\[
{\dd}(\w)-\dd(\bw)=-\frac{1}{|E_{{\sigma}}|}D_{{\sigma}}-\frac{1}{|E_{{\rho}}|}D_{{\rho}}-\div  \phi(e). 
\]
This implies
\be \la{eq:tau}
\div  \phi(e)+\dd(\w)  -  \dd(\bw)+\sum_{\tau \in \Sigma(1)}\sum_{\gamma \in Z_\tau:\bw \in \gamma } \frac{1}{|E_{\tau}|}D_{\tau} 
 = \sum_{\tau \in \Sigma(1)} \sum_{\substack{\gamma \in Z_\tau : \bw \in \gamma \\ \gamma \neq \alpha,\beta}}^{}\frac{1}{|E_{\tau}|}D_{\tau} \geq 0. \qedhere
\ee
 
\end{proof}

{The small polygon $N_{\bw \w}$ is by definition the Newton polygon of $Q_{\bw \w}$. By Proposition~\ref{pro:globalsec}, this is equivalent to saying that $Q_{\bw \w}$ is a global section of a line bundle $\mathcal O_{\mathscr X_N}(\divebw_{\bw \w})$, where $\divebw_{\bw \w}$ is the divisor associated to $N_{\bw \w}$ by the correspondence (\ref{divisorpolygonbijection}). Now that we have shown that $K$ is a global section of $\mathcal Hom_{\mathscr X_N}(\mathcal E, \mathcal F)$, we can take exterior powers to find which line bundle $\mathcal O_{\mathscr X_N}(\divebw_{\bw \w})$ the minor $Q_{\bw \w}$ of $K$ is a global section of.
} 

Taking the determinant of the map (\ref{MK}), we see that $ \mathrm{det} \ \widetilde{K}$ is a global section of the line bundle  
\be \la{SH1}
 \mathcal H om_{\mathscr X_N}\Bigl(\bigwedge_{\bw \in B} \mathcal E_\bw,\bigwedge_{\w \in W} \mathcal F_\w \Bigr) \cong \mathcal O_{\mathscr X_N}\Bigl( \sum_{\w \in W } \dd(\w)-\sum_{\bw \in B }\Bigl(\dd(\bw)-\sum_{\rho \in \Sigma(1)}\sum_{\alpha \in Z_\rho:b \in \alpha } \frac{1}{|E_{\rho}|}D_{\rho}  \Bigr)\Bigr).
\ee

\begin{lemma}\la{detpoly}
  Let $D_N$ be the divisor associated to $N$ by the correspondence (\ref{divisorpolygonbijection}) between divisors and polygons. 
Then one has 
\be\la{tt}
 \sum_{\w \in W } \dd(\w)-\sum_{\bw \in B }\Bigl(\dd(\bw)-\sum_{\rho \in \Sigma(1)}\sum_{\alpha \in Z_\rho:b \in \alpha } \frac{1}{|E_{\rho}|}D_{\rho}  \Bigr)=D_N.
\ee
 Therefore, 
\[
\det\widetilde{K} \in H^0(\mathscr X_N, \mathcal O_{\mathscr X_N}(D_N)).
\]
\end{lemma}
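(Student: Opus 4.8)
The plan is to observe first that the final clause is a formal consequence of the divisor identity~(\ref{tt}): taking determinants of the morphism $\widetilde K\colon\mathcal E\to\mathcal F$ of Proposition~\ref{main::lem} already exhibits $\det\widetilde K$ as a global section of the line bundle $\mathcal H om_{\mathscr X_N}\bigl(\bigwedge_{\bw}\mathcal E_\bw,\bigwedge_{\w}\mathcal F_\w\bigr)$, whose associated divisor is, by~(\ref{SH1}), exactly the left-hand side of~(\ref{tt}); so once~(\ref{tt}) is established the membership $\det\widetilde K\in H^0(\mathscr X_N,\mathcal O_{\mathscr X_N}(D_N))$ follows. Thus all the work is in~(\ref{tt}). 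Since both sides are $\Q$-divisors supported on the lines at infinity $\{D_\rho\}$, I would prove it by comparing the coefficient of each $D_\rho$ separately.

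To compute $\sum_{\w}\dd(\w)-\sum_{\bw}\dd(\bw)$, I would fix a perfect matching $\dimer_0\in\mathscr M$; since it pairs each white vertex with a unique black vertex, this difference equals $\sum_{e\in\dimer_0}\bigl(\dd(\w_e)-\dd(\bw_e)\bigr)$, where $e$ has black endpoint $\bw_e$ and white endpoint $\w_e$. Lemma~\ref{lem:DAM} rewrites each summand via the two zig-zag paths through $e$: if these lie in $Z_{\sigma(e)}$ and $Z_{\tau(e)}$ then $\dd(\w_e)-\dd(\bw_e)=-\tfrac{1}{|E_{\sigma(e)}|}D_{\sigma(e)}-\tfrac{1}{|E_{\tau(e)}|}D_{\tau(e)}-\div\phi(e)$, and summing gives
\[
\sum_{\w}\dd(\w)-\sum_{\bw}\dd(\bw)=-\sum_{e\in\dimer_0}\Bigl(\tfrac{1}{|E_{\sigma(e)}|}D_{\sigma(e)}+\tfrac{1}{|E_{\tau(e)}|}D_{\tau(e)}\Bigr)-\div\phi(\dimer_0).
\]
Meanwhile, grouping the remaining term of~(\ref{tt}) by ray and writing $n_\alpha$ for the number of black vertices on a zig-zag path $\alpha$, one has $\sum_{\bw}\sum_{\alpha\in Z_\rho:\bw\in\alpha}\tfrac{1}{|E_\rho|}D_\rho=\tfrac{1}{|E_\rho|}\bigl(\sum_{\alpha\in Z_\rho}n_\alpha\bigr)D_\rho$. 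Combining these and using $\#Z_\rho=|E_\rho|$, the identity~(\ref{tt}) becomes equivalent to the combinatorial claim: for every ray $\rho$ and every $\dimer_0\in\mathscr M$,
\[
\sum_{\alpha\in Z_\rho}\Bigl(n_\alpha-\#\{e\in\dimer_0 : e\text{ is an edge of }\alpha\}\Bigr)=|E_\rho|\bigl(a_\rho+\langle m_0,u_\rho\rangle\bigr),\qquad \phi(\dimer_0)=\chi^{m_0}.
\]

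I expect this last display to be the main obstacle. The plan for it is to compare $\dimer_0$ with a distinguished ``external'' matching $\dimer_\rho$ whose homology class is a vertex of $N$ lying on the edge $E_\rho$; the construction of such matchings and the way their edges meet the zig-zag paths in $Z_\rho$ are given explicitly in \cite{GK12}. Concretely: (i) the difference of the two counts for $\dimer_0$ and for $\dimer_\rho$ is controlled by $\langle[\dimer_0-\dimer_\rho],u_\rho\rangle$, which (after fixing the normalization of $\dd$) accounts for the $\langle m_0,u_\rho\rangle$ term; and (ii) for $\dimer_\rho$ the count $\#\{e\in\dimer_\rho:e\subset\alpha\}$ is the same for all $\alpha\in Z_\rho$, and summing against the identity $\sum_{\alpha\in Z_\rho}[\alpha]=|E_\rho|\cdot(\text{primitive vector along }E_\rho)$ produces the term $|E_\rho|\,a_\rho$ through the lattice-length interpretation $a_\rho=-\min_{m\in N}\langle m,u_\rho\rangle$. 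An alternative that avoids the matching bookkeeping, at the cost of separate toric input, is to use that $\det\widetilde K$ restricts on $\rm T$ to $\det K$, whose Newton polytope is a translate of $N$ by Kasteleyn's Theorem~\ref{Kastthm}; hence by the correspondence~(\ref{equivalence}) the polygon attached to the left-hand side of~(\ref{tt}) contains a translate of $N$, and the reverse inclusion follows by checking that $\deg\bigl(\mathcal O_{\mathscr X_N}(\text{LHS})|_{D_\rho}\bigr)=|E_\rho|=\deg\bigl(\mathcal O_{\mathscr X_N}(D_N)|_{D_\rho}\bigr)$ for each $\rho$. In either route the delicate point is the translation ambiguity carried by $m_0$ — equivalently, one should be content to prove~(\ref{tt}) up to linear equivalence of divisors, which is all the corollary needs.
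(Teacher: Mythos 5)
Your overall route is the paper's: the membership statement is read off from (\ref{SH1}) once (\ref{tt}) is known, and (\ref{tt}) is verified one ray at a time by pairing black with white vertices along a perfect matching and applying Lemma \ref{lem:DAM}; your displayed combinatorial identity is the correct ``coefficient of $D_\rho$'' form of (\ref{tt}). The only structural difference is that the paper takes the pairing matching, for each ray $\rho$, to be the extremal matching $\dimer_\rho$ whose homology class is a vertex of $N$ on $E_\rho$, which makes your comparison step (i) unnecessary; it is in any case automatic, since the divisor on the left of (\ref{tt}) does not depend on which matching is used for the pairing (and the normalization of $\dd$ is irrelevant there, as it cancels between $\sum_{\w}$ and $\sum_{\bw}$), so checking the identity for $\dimer_\rho$ checks it for every $\dimer_0$.

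The genuine gap is in your step (ii), which is where the content lies, and as stated it is wrong on both counts. First, $\#\{e\in\dimer_\rho : e\subset\alpha\}$ need not be the same for all $\alpha\in Z_\rho$: parallel zig-zag paths can have different lengths. Second, the term $|E_\rho|a_\rho$ does not arise from summing $[\alpha]$ over $Z_\rho$; with $\dimer_0=\dimer_\rho$ your right-hand side is $|E_\rho|\bigl(a_\rho+\langle m_\rho,u_\rho\rangle\bigr)=0$, because $m_\rho$ lies on the supporting line $\langle\cdot,u_\rho\rangle=-a_\rho$ of $E_\rho$ --- this is the only place $a_\rho=-\min_{m\in N}\langle m,u_\rho\rangle$ enters. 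What you actually have to prove is that the left-hand side vanishes for $\dimer_\rho$, i.e.\ that every black vertex lying on a zig-zag path $\alpha\in Z_\rho$ is matched by $\dimer_\rho$ along an edge of $\alpha$. This is the property of external matchings from \cite{GK12} that the paper uses (in the proof of Theorem \ref{Th2.3}): a matching with homology class on $E_\rho$ uses exactly one of the two $\alpha$-edges at each white vertex of $\alpha$; since $\alpha$ has equally many black and white vertices, the same follows for black vertices, and this is exactly the cancellation ``$D_\rho$ appears twice with opposite signs'' in the paper's proof. Finally, settling for (\ref{tt}) up to linear equivalence is not harmless: Corollary \ref{Cor3.5} and the definition of the small polygons use the actual divisor, not its class. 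The translation ambiguity disappears once the representative of $N$ is fixed to be the convex hull of the exponents of $\phi(\dimer)$, $\dimer\in\mathscr M$ (the Newton polygon of $\det K$); with that choice $\phi(\dimer_\rho)$ is the monomial of a vertex of $N$ on $E_\rho$ and the identity holds on the nose, which is what the paper's computation $-\sum_{e\in\dimer}\div\phi(e)=-\div z^{i_1}w^{i_2}$ implicitly asserts.
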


\begin{proof}

 Let $a_\rho$ be the coefficient of $D_\rho$ in $D_N$. Let $(i_1,i_2)$ be a vertex of $P$ {contained in} $E_\rho$ and let $\dimer$ be {the associated} extremal dimer cover. We pair up black and white vertices in the sum according to $\dimer$:
$$
\sum_{e=\bw \w \in  \dimer}\Bigl(\dd(\w)-\dd(\bw)+\sum_{\rho \in \Sigma(1)}\sum_{\alpha \in Z_\rho : \bw \in \alpha} \frac{1}{|E_{\rho}|}D_{\rho}\Bigr).
$$
Now we observe that if $e$ is not contained in any zig-zag path in {$Z_\rho$}, then $D_\rho$ does not appear in the summand, and if $e$ is contained in a zig-zag path associated to $E_\rho$, then $D_\rho$ appears twice but with opposite signs, modulo contributions from intersections of edges with $\gamma_z,\gamma_w$. Therefore, there is no net contribution to the coefficient of $D_\rho$ except for the intersections of edges in $\dimer$ with $\gamma_z,\gamma_w$, which is the same as in
$$
-\sum_{e \in \dimer}\div \phi(e)=-\div z^{i_1} w^{i_2},
$$
which is $a_\rho$. Comparing with (\ref{SH1}),  we see that (\ref{tt}) implies the second statement.
\end{proof}


Now we consider the codimension 1 exterior power, where we remove $\{{ \bw},{\bf w}\}$. Let $\widetilde {Q}$ be the adjugate matrix of $\widetilde K$. Set 
\[
\divebw_{\bw \w}:=D_N-\dd(\w)+\dd(\bw)-\sum_{\rho \in \Sigma(1)}\sum_{\alpha \in Z_\rho : \bw \in \alpha } \frac{1}{|E_{\rho}|}D_{\rho}.
\]
\begin{corollary} \la{Cor3.5}
$\widetilde { Q}_{\bw \w} \in H^0(\mathscr X_N, \mathcal O_{\mathscr X_N}(\divebw_{\bw \w}))$. 
\end{corollary}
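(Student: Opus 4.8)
The plan is to realize $\widetilde Q_{\bw\w}$ as a matrix entry of the codimension-one exterior power $\bigwedge^{n-1}\widetilde K$, where $n:=\#B=\#W$, and then to identify the line bundle it lives in by a purely formal computation, using only Proposition~\ref{main::lem} and Lemma~\ref{detpoly}.

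First I would recall from Proposition~\ref{main::lem} that $\widetilde K$ is a global section of $\mathcal H om_{\mathscr X_N}(\mathcal E,\mathcal F)$, where $\mathcal E=\bigoplus_{\bw\in B}\mathcal E_\bw$ and $\mathcal F=\bigoplus_{\w\in W}\mathcal F_\w$ are locally free of rank $n$. Since exterior powers are functorial, $\widetilde K$ induces a morphism $\bigwedge^{n-1}\widetilde K:\bigwedge^{n-1}\mathcal E\to\bigwedge^{n-1}\mathcal F$. For a locally free sheaf $\mathcal G$ of rank $n$ there is a canonical isomorphism $\bigwedge^{n-1}\mathcal G\cong\mathcal G^\vee\otimes\det\mathcal G$, given by contraction against a local generator of $\det\mathcal G$. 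Applying this to both $\mathcal E$ and $\mathcal F$ turns $\bigwedge^{n-1}\widetilde K$ into a global section of
\[
\mathcal H om\bigl(\mathcal E^\vee\otimes\det\mathcal E,\ \mathcal F^\vee\otimes\det\mathcal F\bigr)\ \cong\ \mathcal H om(\mathcal F,\mathcal E)\otimes\mathcal H om(\det\mathcal E,\det\mathcal F),
\]
and this section is precisely the adjugate $\widetilde Q$ (up to the standard signs $(-1)^{i+j}$ in its entries), since on the open torus ${\rm T}$ it restricts to $Q=K^{-1}\det K$.

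Next, the direct sum decompositions give $\mathcal H om(\mathcal F,\mathcal E)=\bigoplus_{\bw,\w}\mathcal H om(\mathcal F_\w,\mathcal E_\bw)$, and the $(\bw,\w)$-component of $\widetilde Q$ is $\widetilde Q_{\bw\w}$. Hence
\[
\widetilde Q_{\bw\w}\in H^0\bigl(\mathscr X_N,\ \mathcal H om(\mathcal F_\w,\mathcal E_\bw)\otimes\mathcal H om(\det\mathcal E,\det\mathcal F)\bigr).
\]
By Lemma~\ref{detpoly}, $\mathcal H om(\det\mathcal E,\det\mathcal F)\cong\mathcal O_{\mathscr X_N}(D_N)$, this being the line bundle in which $\det\widetilde K$ lives. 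Substituting $\mathcal F_\w=\mathcal O_{\mathscr X_N}(\dd(\w))$ and $\mathcal E_\bw=\mathcal O_{\mathscr X_N}\bigl(\dd(\bw)-\sum_\rho\sum_{\alpha\in Z_\rho:\bw\in\alpha}\frac1{|E_\rho|}D_\rho\bigr)$ and comparing with the definition of $\divebw_{\bw\w}$ just above gives
\[
\mathcal H om(\mathcal F_\w,\mathcal E_\bw)\otimes\mathcal O_{\mathscr X_N}(D_N)\cong\mathcal O_{\mathscr X_N}\Bigl(D_N-\dd(\w)+\dd(\bw)-\sum_{\rho\in\Sigma(1)}\sum_{\alpha\in Z_\rho:\bw\in\alpha}\frac1{|E_\rho|}D_\rho\Bigr)=\mathcal O_{\mathscr X_N}(\divebw_{\bw\w}),
\]
which is the claim. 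Combined with Proposition~\ref{pro:globalsec}, this also yields that the Newton polygon of the Laurent polynomial $Q_{\bw\w}$ is contained in $N_{\bw\w}$.

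Essentially no computation is involved here, and the only point requiring a little care is the canonical identification $\bigwedge^{n-1}\mathcal G\cong\mathcal G^\vee\otimes\det\mathcal G$ together with the check that, under it, $\bigwedge^{n-1}\widetilde K$ corresponds entry by entry to the classical adjugate — in particular that its restriction to ${\rm T}$ is the Laurent polynomial $Q_{\bw\w}$, so that the line bundle we computed really does control the monomials appearing in $Q_{\bw\w}$. Conceptually the content of the corollary is simply that, once the Kasteleyn operator extends to a map of vector bundles on the stack $\mathscr X_N$, each of its minors automatically extends to a section of the predicted line bundle; no geometric input beyond Proposition~\ref{main::lem} and Lemma~\ref{detpoly} is needed.
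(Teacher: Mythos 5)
Your argument is correct and is essentially the paper's own: the paper obtains the corollary by passing to the codimension-one exterior power of $\widetilde K$ (i.e.\ the minor with the $\w$-row and $\bw$-column removed) and identifying the resulting $\mathcal{H}om$ line bundle via Lemma~\ref{detpoly}, which is exactly what you do, just phrased through the canonical isomorphism $\bigwedge^{n-1}\mathcal G\cong\mathcal G^\vee\otimes\det\mathcal G$ and the global adjugate. The bookkeeping ($\mathcal Hom(\mathcal F_\w,\mathcal E_\bw)\otimes\mathcal O_{\mathscr X_N}(D_N)\cong\mathcal O_{\mathscr X_N}(\divebw_{\bw\w})$) matches the paper's definition of $\divebw_{\bw\w}$, so no gap remains.
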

We therefore arrive at the definition of the small polygon $N_{\bw \w}$ given in Definition \ref{SNP1} {by the correspondence (\ref{divisorpolygonbijection})}.

\subsection{Points at infinity} \la{sec:paiinf}

In this section, we prove that the points at infinity of $\spectralcurve$ are as described in Section~\ref{sec:cas}. {We use the notations $U_\Sigma \subset \C^{\Sigma(1)}$ and the standard coordinates $(z_\rho)$ on $\C^{\Sigma(1)}$ from Appendix~\ref{A3}. 
		The toric variety $X_N$ is the quotient $U_\Sigma/H$, where $H$ is the kernel of the map $(\C^\times)^{\Sigma(1)} \rightarrow {\rm T}$ sending $(z_\rho)_{\rho \in \Sigma(1)}$ to $(\prod_{\rho} z_\rho^{\langle (1,0) , u_\rho \rangle},\prod_\rho z_\rho^{\langle (0,1) , u_\rho \rangle} )$. There is a canonical map $\pi:U_\Sigma \ra X_N$ given by sending $(z_\rho)$ to $H \cdot (z_\rho^{|E_\rho|})$ which induces the coarse moduli space map $\pi:\mathscr X_N \rightarrow X_N$. The spectral curve $\spectralcurve$ is cut out by the section $P=P(z,w)$ of $\mathcal O_{X_N}(D_N)$. The pullback $\pi^*P$ defines a section of $\mathcal O_{\mathscr X_N}(D_N)$ which is a $G$-invariant section of $\mathcal O_{U_\Sigma}$, so it vanishes on a $G$-invariant subvariety $\spectralcurve_{U_\Sigma}$. Each point at infinity of $\spectralcurve$ corresponds to a $G$-invariant set of points at infinity of $\spectralcurve_{U_\Sigma}$, so we will determine the points at infinity of $\spectralcurve$ from the points at infinity of $\spectralcurve_{U_\Sigma}$. By Lemma~\ref{detpoly}, $\pi^*P = \det \widetilde K$ so the points at infinity of $\spectralcurve_{U_\Sigma}$ are obtained by setting $z_\rho = \det \widetilde K= 0$ for $\rho \in \Sigma(1)$.
}

 From (\ref{eq:tau}) and Proposition~\ref{pro:globalsec}, {for $e=\bw \w$}, we get that $\phi(e)$ corresponds to the $G$-invariant section of $\mathcal O_{U_\Sigma}$ given by
\be \la{phiestack}
\phi(e) = \prod_{\tau \in \Sigma(1)} \prod_{\substack{\gamma \in Z_\tau : \bw \in \gamma \\ \gamma \neq \alpha,\beta}}^{}z_\tau.
\ee
{The divisor $D_\rho$ in $X_N$ corresponds to 
$\{z_\rho=0\} \subset U_\Sigma$.} $\phi(e)$ vanishes on $\{z_\rho=0\}$ precisely when there is a zig-zag path $\alpha \in Z_\rho$ such that $\bw$ is contained in $\alpha$ but $\w$ is not contained in $\alpha$. This implies that when restricted to $\{z_\rho=0\}$, after reordering the black and white vertices, the extended Kasteleyn operator $\widetilde K$ takes a block-upper-triangular form 
$$
\begin{pmatrix}
\restr{\widetilde K}{\alpha_1} &  & & &*\\
 & \restr{\widetilde K}{\alpha_2} &  & &*\\
 & & \ddots & & \vdots \\
 &  &  &\restr{\widetilde K}{\alpha_n} & \ast \\
&& & &\restr{\widetilde K}{\Gamma-\{\alpha_1,\dots,\alpha_n\}}
\end{pmatrix},
$$
where $Z_\rho=\{\alpha_1,\dots,\alpha_n\}$, $\restr{\widetilde K}{\alpha_i}$ is the restriction of $\widetilde K$ to the black and white vertices in $\alpha_i$, and the $*$'s denote some possibly nonzero blocks whereas {any block that has not been indicated is zero. In particular, the nonzero blocks $*$ are only in the last column (in these blocks, $\bw$ is not in $\alpha$ but $\w$ is in $\alpha$). Note that the zig-zag paths $\alpha_1,\dots,\alpha_n$ do not share any vertices because of minimiality since otherwise we would have a parallel bigon, so the blocks $\restr{\widetilde K}{\alpha_1},\dots,\restr{\widetilde K}{\alpha_n}$ do not overlap.}

{If $\alpha \in Z_\rho$ is $\bw_1 \ra \w_1 \ra \bw_2 \ra \cdots \ra \w_d \ra \bw_1$, the determinant of the block $\restr{\widetilde K}{\alpha}$ is }
\begin{align*}
\restr{\det \widetilde K}{\alpha}&= 
 \det\begin{pmatrix} 
  \widetilde{K}_{\w_1 \bw_1}  &&&& \widetilde{K}_{\w_d \bw_1}\\
   \widetilde{K}_{ \w_1 \bw_2}   & \widetilde{K}_{ \w_2 \bw_2} & &  \\
 &  \widetilde{K}_{ \w_2 \bw_3}   \\
&&\ddots&\ddots \\
   &  & &  \widetilde{K}_{ \w_{d-1} \bw_d}     & \widetilde{K}_{ \w_d \bw_d} 
    \end{pmatrix}\\
    &= \prod_{i=1}^d \widetilde{K}_{ \w_i \bw_i} - (-1)^{d} \prod_{i=1}^d \widetilde{K}_{ \w_{i-1} \bw_i}\\
    &=-\prod_{i=1}^d\left(wt(\bw_i \w_{i-1})\epsilon(\bw_i \w_{i-1})\phi(\bw_i \w_{i}) \right) \left(\prod_{i=1}^d \frac{\phi(\bw_i \w_{i-1})}{\phi(\bw_i \w_{i})}-C_\alpha \right),
\end{align*}
where we have used the definition of the Kasteleyn matrix (see (\ref{edgeph}), (\ref{Kastdet}). {Plugging in (\ref{phiestack}) and using the fact that $\alpha$ intersects a zig-zag path $\beta \in Z_\tau$ $\langle [\alpha],u_\tau \rangle$ times, we get
\[
\prod_{i=1}^d \frac{\phi(\bw_i \w_{i-1})}{\phi(\bw_i \w_{i})} = \prod_{\tau \in \Sigma(1)}z_\tau^{-|E_\tau|\langle [\alpha], u_\tau \rangle}.
\]
Therefore, the points at infinity of $\spectralcurve_{U_\Sigma}$ are given by setting $z_\rho=0$ and $\prod_{\tau \in \Sigma(1)}z_\tau^{-|E_\tau|\langle [\alpha], u_\tau \rangle} = C_\alpha$. The point at infinity of $\spectralcurve$ is the point obtained by applying $\pi$ to any of these points. From the definition of $\pi$, we get that $\prod_{\tau \in \Sigma(1)}z_\tau^{-|E_\tau|\langle [\alpha], u_\tau \rangle} = \pi^* \chi^{-[\alpha]}$, so the point at infinity of $\spectralcurve$ is given by 
 \be \la{caspoint}
\chi^{-[\alpha]}= C_\alpha.
\ee
}

\section{Behaviour of the Laurent polynomial  \texorpdfstring{${ Q}_{\bw \w}(z,w)$}{Qbw} at infinity}\la{extensionsection}
We proved in Corollary~\ref{Cor3.5} that the Laurent polynomial ${ Q}_{\bw \w}(z,w)$ lies in the finite dimensional vector space $H^0(\mathscr X_N, \mathcal O_{\mathscr X_N}(\divebw_{\bw \w}))$. We need some additional constraints on ${Q}_{\bw \w}(z,w)$ to determine it. Corollary \ref{cordeg} provides  $g$ linear equations for the coefficients of ${ Q}_{\bw \w}(z,w)$ coming from the vanishing of ${Q}_{\bw \w}(z,w)$ at the $g$ points of the divisor $S_\w$. We obtain additional equations from the behaviour of ${ Q}_{\bw \w}(z,w)$ at the points at infinity of the spectral curve, which we study in this section.

Recall that $X_N$ is the toric surface associated to $N$ compactifying $T$. The restriction of the Kasteleyn operator to the open spectral curve $\spectralcurve^\circ$ is a map of trivial sheaves: 
\[
\restr{K}{\spectralcurve^\circ}: \bigoplus_{ \bw \in B} {\mathcal O}_{\spectralcurve^\circ}  \longrightarrow   \bigoplus_{ w \in W} \mathcal O_{\spectralcurve^\circ}.
\]
{Recall the correspondence between divisors $D$ and invertible sheaves with rational sections $(\mathcal L,s)$: given a divisor $D$, the corresponding invertible sheaf $\mathcal L=\mathcal O_\spectralcurve(D)$ is defined on an open $U$ by
	\[
H^0(U,\mathcal O_\spectralcurve(D)) := \{ t  \in K(\spectralcurve)^\times : \restr{(\div t + D)}{U} \geq 0\} \cup \{0\},	
	\]
with the obvious restriction maps, where $K(\spectralcurve)^\times$ denotes the nonzero rational functions on $\spectralcurve$. The rational section $s$ corresponds to the rational function $1$. On the other hand, given $(\mathcal L,s)$, we obtain $D$ as the divisor $\div s$. Moreover, there is a correspondence between rational functions $t$ on $\spectralcurve$ and rational sections $\overline t$ of $\mathcal L$ given by $t \mapsto \overline t:=s t$. In particular, \be \la{eq:divttbar}
\div \overline t = \div t + \div s = \div t+D, 
\ee
and so $\overline t$ is regular if and only if $\div t + D \geq 0$.
 }

{A similar proof to Proposition~\ref{main::lem} shows that the Kasteleyn matrix $K$, which is a matrix of rational functions on $\spectralcurve$, defines a regular map} $\overline K$ of locally free sheaves on $\spectralcurve$ extending $\restr{K}{\spectralcurve^\circ}$, providing an exact sequence
\be \la{eq:kextc}
0 \ra \mathcal M \ra  \bigoplus_{ \bw \in B} \mathcal O_\spectralcurve \Bigl({\bf d}( \bw)-\sum_{ \alpha \in Z:\bw \in \alpha} \nu(\alpha)\Bigr) \xrightarrow[]{\overline K} \bigoplus_{ \w \in W} \mathcal O_\spectralcurve({\bf d}(\w))  
\ra \mathcal L \ra 0,
\ee
where $\mathcal M$ and $\mathcal L$ are the kernel and cokernel of the map $\overline K$ respectively. {When we say $\overline K$ is regular, we mean that each entry $\overline K_{\w \bw}$ is a regular section of the corresponding $\mathcal Hom$ line bundle \[\mathcal Hom_{\mathcal O_\spectralcurve} \Bigl(\mathcal O_\spectralcurve ({\bf d}( \bw)-\sum_{ \alpha \in Z:\bw \in \alpha} \nu(\alpha)), \mathcal O_\spectralcurve({\bf d}(\w))\Bigr).\]} 
For generic dimer weights, $\Sigma$ is smooth, and $\mathcal M$ and $\mathcal L$ are line bundles (so $\overline Q$ has rank $1$). Let $\overline s_{ \bw}$ and $\overline s_{ \w}$ be sections of 
$$\mathcal M^\vee \otimes \mathcal O_\spectralcurve\Bigl({\bf d}( \bw)-\sum_{\alpha \in Z: \bw \in \alpha} \nu(\alpha)\Bigr)~\text{and}~\mathcal L \otimes \mathcal O_\spectralcurve({\bf d} (\w))^\vee$$
  respectively, given by the  $\bw$-entry of the kernel map and  $\w$-entry of the cokernel map respectively. Since $\overline {Q}$ has rank $1$, we have $\overline{Q}_{\bw \w}=\overline s_\bw \overline s_\w$. Denote by  $S_\bw$ and $S_\w$ the 
   effective divisors on the open spectral curve $\spectralcurve^\circ$ given by vanishing of the $\bw$-row and $\w$-column of $Q$ respectively, {or equivalently, the vanishing of $\overline s_\bw$ and $\overline s_\w$ respectively.} 
\begin{lemma}\label{divbw}
\begin{align*}
    \div_{\spectralcurve}\overline s_\bw &= S_ \bw+\sum_{\alpha \in Z: \bw \notin \alpha} \nu(\alpha),\\
    \div_{\spectralcurve}\overline s_\w &= S_\w,
\end{align*}
\end{lemma}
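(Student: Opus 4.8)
The plan is to reduce immediately to the points at infinity: on $\spectralcurve^\circ$ both equalities hold by the very definitions of $S_\bw$ (the vanishing of the $\bw$-entry of the kernel map) and $S_\w$ (the vanishing of the $\w$-entry of the cokernel map), so the whole content is the computation of the orders of vanishing of $\overline s_\bw$ and $\overline s_\w$ at each point at infinity $\nu(\alpha)$. Fix a zig-zag path $\alpha\in Z_\rho$. For generic weights $\spectralcurve$ is smooth and meets $D_\rho$ transversally at $\nu(\alpha)$, so I would choose a local parameter $t$ on $\spectralcurve$ there, trivialize near $\nu(\alpha)$ each summand $\mathcal O_{\spectralcurve}({\bf d}(\bw)-\sum_{\beta\in Z:\bw\in\beta}\nu(\beta))$ and $\mathcal O_{\spectralcurve}({\bf d}(\w))$ of the two bundles in (\ref{eq:kextc}), and thereby turn $\overline K$ into a matrix $M(t)$ of holomorphic functions whose entrywise orders of vanishing at $t=0$ are the intrinsic orders of the sections $\overline K_{\w\bw}$.

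The next step is to import the block-triangular normal form established in Section~\ref{sec:paiinf}: ordering the vertices so that those lying on the zig-zag paths $Z_\rho=\{\alpha=\alpha_1,\dots,\alpha_n\}$ come first, $M(0)$ is block-triangular with cyclic-bidiagonal diagonal blocks $B_{\alpha_1},\dots,B_{\alpha_n}$ and one further block for the remaining vertices, the only nonzero off-diagonal blocks being those carrying the ``remaining-black'' columns into the ``$Z_\rho$-white'' rows. The determinant computation recalled there shows that $B_\alpha$ has corank exactly one at $t=0$ (this is precisely the statement $\nu(\alpha)\in\spectralcurve$) while all the other diagonal blocks stay invertible, so $\overline K$ has corank one at $\nu(\alpha)$, $\mathcal M$ and $\mathcal L$ remain line bundles there (as noted after (\ref{eq:kextc})), and the kernel vector and the cokernel covector of $\overline K$ near $\nu(\alpha)$ are controlled by $B_\alpha$. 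Solving $M(t)v(t)=0$ shows $v(0)$ is supported on the black vertices of $\alpha$ and equals, up to a scalar, the generically fully supported kernel vector of $B_\alpha(0)$; hence the $\bw$-component of $v$ is a unit at $\nu(\alpha)$ when $\bw\in\alpha$ and vanishes to order exactly one when $\bw\notin\alpha$. Translating this back through the chosen trivializations and tracking the $\nu(\alpha)$-coefficients of the twisting divisors yields $\div_{\spectralcurve}\overline s_\bw=S_\bw+\sum_{\alpha\in Z:\,\bw\notin\alpha}\nu(\alpha)$.

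For $\overline s_\w$ I would instead solve $c(t)M(t)=0$: the components of $c(0)$ on the white vertices of the $\alpha_i\neq\alpha$ vanish (those diagonal blocks are invertible), those on the white vertices of $\alpha$ span the left kernel of $B_\alpha(0)$ and are generically nonzero, and — because the only nonzero off-diagonal blocks feed ``remaining-black'' into ``$Z_\rho$-white'' — the components of $c(0)$ on the remaining white vertices are then forced through the invertible ``remaining'' block and are generically nonzero as well. The delicate point is that the naive fiber picture of $M(0)$ makes $c_\w$ appear to vanish at $\nu(\alpha)$ whenever $\w$ lies on a zig-zag path of $Z_\rho$ other than $\alpha$; I would show this apparent zero is exactly cancelled by the twist built into $\mathcal F_\w=\mathcal O_{\spectralcurve}({\bf d}(\w))$, using that across an edge $\bw\w$ the $\nu(\alpha)$-coefficients of ${\bf d}(\w)$ and of ${\bf d}(\bw)-\sum_{\beta\in Z:\bw\in\beta}\nu(\beta)$ differ precisely as dictated by Lemma~\ref{lem:DAM}. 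The conclusion is that $\overline s_\w$ has no zero at any point at infinity, i.e.\ $\div_{\spectralcurve}\overline s_\w=S_\w$; the identity $\overline s_\bw\overline s_\w=\overline Q_{\bw\w}$ together with Corollary~\ref{Cor3.5} gives a consistency check on the sum of the two orders at $\nu(\alpha)$.

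The hard part will be exactly this twist bookkeeping: the block-triangular fiber picture of $\overline K$ at $\nu(\alpha)$ is symmetric in black and white and by itself would wrongly predict spurious zeros of $\overline s_\w$ along the other points of $\spectralcurve\cap D_\rho$, so the asymmetry of the statement must be produced entirely by the asymmetry of the twisting divisors in (\ref{eq:kextc}), namely the extra term $-\sum_{\beta\in Z:\bw\in\beta}\nu(\beta)$ on the black side. Making this precise requires combining the edge-by-edge analysis of Section~\ref{extensionsection} and Section~\ref{sec:paiinf} with the local structure of the discrete Abel map through Lemma~\ref{lem:DAM}, and checking that the twists simultaneously cancel the white-vertex contributions and produce the genuine order-one zeros of $\overline s_\bw$ at the $\nu(\alpha)$ with $\bw\notin\alpha$.
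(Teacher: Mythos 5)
Your reduction to the points at infinity and the idea of trivializing the bundles in (\ref{eq:kextc}) near $\nu(\alpha)$ are the right start, but the central step is based on a fiber structure that is not there. The block-triangular form of Section~\ref{sec:paiinf}, with one cyclic block per zig-zag path of $Z_\rho$, describes the vanishing of the \emph{stacky} entries along the whole divisor $\{z_\rho=0\}$, in the homogeneous trivialization. Once you pass to the curve and trivialize so that (as you stipulate) the entrywise orders of $M(t)$ at $t=0$ are the intrinsic orders of the sections $\overline K_{\w\bw}$ at the single point $\nu(\alpha)$, the relevant computation is (\ref{eq:tau}) (equivalently Lemma~\ref{lem:DAM}): the entry attached to an edge $e=\bw\w$ vanishes at $\nu(\alpha)$ if and only if $\bw\in\alpha$ and $e\notin\alpha$. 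Since the zig-zag paths of $Z_\rho$ are vertex-disjoint, the columns of black vertices lying on the \emph{other} paths $\alpha_i\in Z_\rho$ do not degenerate at $\nu(\alpha)$ at all; only the $2\times 2$ decomposition into $\alpha$ versus $\Gamma-\alpha$ survives, which is exactly the paper's local form $\begin{pmatrix} K_1 & B \\ uA & K_2 \end{pmatrix}+O(u)$. Consequently your paragraph on the left kernel is wrong in the intrinsic picture: the components of $c(0)$ on the white vertices of the $\alpha_i\neq\alpha$ do \emph{not} vanish (if they did, it would contradict the very statement $\div_{\spectralcurve}\overline s_\w=S_\w$ you are proving), and the promised rescue — an ``apparent zero exactly cancelled by the twist built into $\mathcal F_\w$'' — is never carried out and mislocates the mechanism: the asymmetric twist $-\sum_{\beta\in Z:\,\bw\in\beta}\nu(\beta)$ acts on the matrix itself, by removing the spurious degeneration from the columns of blacks off $\alpha$, not by cancelling zeros of the cokernel covector afterwards. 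Repairing this collapses your argument onto the paper's: $v(0)=(v,0)$ with $v\in\ker K_1$, $\ker\overline K=(v,-uK_2^{-1}Av)+O(u)$, and $\ker\overline K^{*}=(v',-(K_2^{*})^{-1}Bv')+O(u)$.

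A second, smaller gap: you assert that the $\bw$-component of the kernel vector ``vanishes to order exactly one'' when $\bw\notin\alpha$ and that the remaining cokernel components are ``generically nonzero'', but these exactness/nonvanishing statements are precisely where genericity must be argued, not assumed. The paper does this by invoking the rank-one factorization $\overline Q_{\bw\w}=\overline s_\bw\,\overline s_\w$: if some first-order entry of $-K_2^{-1}Av$ (or some entry of $(K_2^{*})^{-1}Bv'$) vanished, then $\overline Q_{\bw\w}$ would vanish for all $\w$, contradicting the fact that $Q_{\bw\w}$ is a dimer partition function of $\Gamma\setminus\{\bw,\w\}$ which is nonzero for generic weights when $\bw\w$ lies in some dimer cover. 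Some argument of this kind is needed to pin down the orders of vanishing, and it is absent from your proposal.
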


\begin{proof}
By the definition, ${\restr{(\div_{\spectralcurve}\overline s_ \bw)}{\spectralcurve^\circ}}=S_\bw$ and ${\restr{(\div_{\spectralcurve}\overline s_ \w)}{\spectralcurve^\circ}}=S_\w$, so it only remains to find their orders of vanishing at infinity. 

{Let $U \subset \spectralcurve$ be a neighbourhood of $\nu(\alpha)$ that does not contain any other point at infinity.} Let $u$ be a local parameter {in $U$} that vanishes to order $1$ at $\nu(\alpha)$ and nowhere else. {When restricted to $U$, each of the line bundles in the source and target of $\overline K$ in  (\ref{eq:kextc}) is of the form $\mathcal O_U(k \nu(\alpha))$ for some $k \in \Z$. We trivialize $\mathcal O_U(k \nu(\alpha))$ as follows:
	\begin{align*}
	\mathcal O_U(k \nu(\alpha)) &\xrightarrow[]{\cong} \mathcal O_U\\
	f &\mapsto u^{k} f.
	\end{align*}

}
 Let us order the black and white vertices so that the vertices on $\alpha$ come first. Then {in $U$}, we have 
$$
\overline{K}= \begin{pmatrix} 
K_1 & B \\
u A  & K_2 
\end{pmatrix}+O(u),
$$
where $K_1, K_2$ are the restrictions of $\overline{K}$ to $\alpha$ and $\Gamma - \alpha$ respectively. {Since corank $\overline{K}=1$ and since we know corank $K_1>0$ from the computation of the determinant in Section~\ref{sec:paiinf}, we have corank $K_1=1$ and that $K_2$ is invertible.} Let $v \in \ker K_1$. Then,
$$
\ker\overline K= (v,-u K_2^{-1} A v)+O(u).
$$
{If any entry in $v$ or $K_2^{-1} A v$ is $0$, then it means that some $\overline s_\bw$ is identically $0$. Let $\overline Q$ denote the adjugate matrix of $\overline K$. Since $\overline {Q}$ has rank $1$, we have $\overline{Q}_{\bw \w}=\overline s_\bw \overline s_\w = 0$, so we will have $\overline Q_{\bw \w}=0$ for all $\w \in W$. On the other hand, $Q_{\bw \w}$ is the signed partition function for dimer covers of $\Gamma \setminus \{\bw ,\w\}$, so if we choose $\w$ such that $\bw \w$ is an edge of $\Gamma$ used in a dimer cover, then $Q_{\bw \w} \neq 0$ for generic dimer weights, a contradiction. Therefore, the entries of $\ker \overline K$ are nonzero when $u \neq 0$, so $\overline s_\bw$ has a simple zero at $\nu(\alpha)$ for all $\bw \not\in \alpha$ and has no zeroes or poles for $\bw \in \alpha$. }

Similarly, let $v' \in \ker K_1^*$. We have 
$$
\ker\overline K^*= (v',-(K_2^*)^{-1} B v')+O(u).
$$
For generic dimer weights, none of the entries of $(K_2^*)^{-1} B v'$ can vanish, so $\overline s_\w$ has no zeroes or poles at $\nu(\alpha)$.
\end{proof}

\begin{corollary}\label{divQbw}
$\div_{\spectralcurve} Q_{\bw\w}=S_\bw+S_\w-\restr{D_N}{\spectralcurve}+{\bf d}(\w)-{\bf d}(\bw)+\sum_{\alpha \in Z}\nu(\alpha).$
\end{corollary}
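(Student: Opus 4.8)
The plan is to read off $\div_{\spectralcurve} Q_{\bw\w}$ directly from the extension~(\ref{eq:kextc}), by combining Lemma~\ref{divbw} with the rank‑one factorization of $\overline Q$ and the dictionary~(\ref{eq:divttbar}) between rational functions and rational sections. First I would use that for generic weights $\spectralcurve$ is smooth, $\overline Q$ has rank $1$, and hence $\overline Q_{\bw\w}=\overline s_{\bw}\,\overline s_{\w}$; by Lemma~\ref{divbw} the divisor of this section is
\[
\div_{\spectralcurve}\overline Q_{\bw\w}=\div_{\spectralcurve}\overline s_{\bw}+\div_{\spectralcurve}\overline s_{\w}=S_\bw+S_\w+\sum_{\alpha\in Z:\ \bw\notin\alpha}\nu(\alpha).
\]

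Next I would pin down which line bundle $\overline Q_{\bw\w}$ is a section of. Since $\overline s_\bw$ is a section of $\mathcal M^\vee\otimes\mathcal O_\spectralcurve\!\bigl({\bf d}(\bw)-\sum_{\alpha:\bw\in\alpha}\nu(\alpha)\bigr)$ and $\overline s_\w$ of $\mathcal L\otimes\mathcal O_\spectralcurve({\bf d}(\w))^\vee$, the product $\overline Q_{\bw\w}$ lies in $(\mathcal M^\vee\otimes\mathcal L)\otimes\mathcal O_\spectralcurve\!\bigl({\bf d}(\bw)-{\bf d}(\w)-\sum_{\alpha:\bw\in\alpha}\nu(\alpha)\bigr)$. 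Taking the alternating product of determinants along the four‑term exact sequence~(\ref{eq:kextc}) gives $\mathcal M^\vee\otimes\mathcal L\cong\bigotimes_{\w}\mathcal O_\spectralcurve({\bf d}(\w))\otimes\bigotimes_{\bw}\mathcal O_\spectralcurve\!\bigl({\bf d}(\bw)-\sum_{\alpha:\bw\in\alpha}\nu(\alpha)\bigr)^{\vee}$, and I would then argue that this sheaf is $\mathcal O_\spectralcurve(\restr{D_N}{\spectralcurve})$: it is the restriction to $\spectralcurve$ of the isomorphism $\det\mathcal F\otimes(\det\mathcal E)^\vee\cong\mathcal O_{\mathscr X_N}(D_N)$ of Lemma~\ref{detpoly}, using that $\overline K$ is the restriction of $\widetilde K$ (equivalently, $\restr{\mathcal E_\bw}{\spectralcurve}\cong\mathcal O_\spectralcurve({\bf d}(\bw)-\sum_{\alpha:\bw\in\alpha}\nu(\alpha))$ and $\restr{\mathcal F_\w}{\spectralcurve}\cong\mathcal O_\spectralcurve({\bf d}(\w))$). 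Hence $\overline Q_{\bw\w}\in H^0\bigl(\mathcal O_\spectralcurve(D_\bullet)\bigr)$ with $D_\bullet=\restr{D_N}{\spectralcurve}+{\bf d}(\bw)-{\bf d}(\w)-\sum_{\alpha\in Z:\bw\in\alpha}\nu(\alpha)$.

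Finally I would invoke that on $\spectralcurve^\circ$ all sheaves in~(\ref{eq:kextc}) are trivial and $\overline Q_{\bw\w}$ is literally the Laurent‑polynomial minor $Q_{\bw\w}(z,w)$, so $\overline Q_{\bw\w}$ is the rational section of $\mathcal O_\spectralcurve(D_\bullet)$ attached to the rational function $\restr{Q_{\bw\w}}{\spectralcurve}$ in the sense of~(\ref{eq:divttbar}); therefore $\div_\spectralcurve\overline Q_{\bw\w}=\div_\spectralcurve Q_{\bw\w}+D_\bullet$, i.e.\ $\div_\spectralcurve Q_{\bw\w}=\div_\spectralcurve\overline Q_{\bw\w}-D_\bullet$. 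Substituting the two formulas above and merging $\sum_{\alpha:\bw\notin\alpha}\nu(\alpha)$ with $\sum_{\alpha:\bw\in\alpha}\nu(\alpha)$ into $\sum_{\alpha\in Z}\nu(\alpha)$ yields exactly $S_\bw+S_\w-\restr{D_N}{\spectralcurve}+{\bf d}(\w)-{\bf d}(\bw)+\sum_{\alpha\in Z}\nu(\alpha)$.

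The step I expect to carry the real content is the identification $\mathcal M^\vee\otimes\mathcal L\cong\mathcal O_\spectralcurve(\restr{D_N}{\spectralcurve})$ used in the middle paragraph. If one does not simply \emph{define} $\overline K:=\restr{\widetilde K}{\spectralcurve}$, this is the analogue of Lemma~\ref{detpoly} on the curve and I would prove it the same way: pair up black and white vertices along an extremal dimer cover $\dimer$ whose homology class is a vertex $v\in E_\rho$ of $N$, use the curve version of Lemma~\ref{lem:DAM}, namely ${\bf d}(\w)-{\bf d}(\bw)=-\nu(\alpha)-\nu(\beta)-\restr{\div\phi(e)}{\spectralcurve}$ for an edge $e=\bw\w$ with zig‑zag paths $\alpha,\beta$ (valid because $K_{\w\bw}$ is a single edge‑weight monomial), and observe that $\sum_{e\in\dimer}\bigl({\bf d}(\w)-{\bf d}(\bw)+\sum_{\gamma:\bw\in\gamma}\nu(\gamma)\bigr)$ telescopes; since $\dimer$ meets every zig‑zag path of $Z_\rho$ in alternate edges (the explicit description of external dimers in~\cite{GK12}), the coefficient of $\nu(\alpha)$ for $\alpha\in Z_\rho$ works out to $-\langle v,u_\rho\rangle=a_\rho$, and running over all vertices of $N$ produces $\restr{D_N}{\spectralcurve}$. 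The remaining hypotheses — smoothness of $\spectralcurve$, $\mathrm{rank}\,\overline Q=1$, and the non‑vanishing of $\overline s_\bw$ and $\overline s_\w$ — are already in force by genericity and were handled in the proof of Lemma~\ref{divbw}.
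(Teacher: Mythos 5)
Your proposal is correct and follows essentially the paper's own route: factor $\overline Q_{\bw\w}=\overline s_\bw\,\overline s_\w$, apply Lemma~\ref{divbw}, identify the twisting divisor $\restr{D_N}{\spectralcurve}+{\bf d}(\bw)-{\bf d}(\w)-\sum_{\alpha\in Z:\bw\in\alpha}\nu(\alpha)$, and convert with (\ref{eq:divttbar}). The only cosmetic difference is that you obtain this twist via $\mathcal M^\vee\otimes\mathcal L$ and the determinant of the exact sequence (\ref{eq:kextc}) together with a curve analogue of Lemma~\ref{detpoly}, whereas the paper invokes ``a computation similar to Corollary~\ref{Cor3.5}'' --- the same determinant identity in substance, and your closing paragraph correctly supplies the telescoping argument that justifies it on the curve.
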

\begin{proof} Let $\overline Q$ denote the adjugate matrix of $\overline K$. Since $\overline {Q}$ has rank $1$, we have $\overline{Q}_{\bw \w}=\overline s_\bw \overline s_\w$, so that
$$
\div_{\spectralcurve}\overline Q_{\bw \w}=S_\bw+S_\w+\sum_{\alpha \in Z: \bw \notin \alpha} \nu(\alpha).
$$
{A computation similar to Corollary~\ref{Cor3.5} shows that \[\overline Q_{\bw \w} \in H^0(\spectralcurve,\mathcal O_\spectralcurve(-\restr{D_N}{\spectralcurve}+{\bf d}(\w)-{\bf d}(\bw)+\sum_{\alpha \in Z:\bw \in \alpha}\nu(\alpha))).\]}
{$Q_{\bw \w}$ is the rational function corresponding to the rational section $\overline Q_{\bw \w}$. Therefore, using (\ref{eq:divttbar}), we have }
\begin{align*}
\div_{\spectralcurve}Q_{\bw\w}&=\div_{\spectralcurve} \overline{Q}_{\bw \w}-\restr{D_N}{\spectralcurve}+{\bf d}(\w)-{\bf d}(\bw)+\sum_{\alpha \in Z:\bw \in \alpha}\nu(\alpha)\\
&=S_\bw+S_\w-\restr{D_N}{\spectralcurve}+{\bf d}(\w)-{\bf d}(\bw)+\sum_{\alpha \in Z}\nu(\alpha). \qedhere
\end{align*}
\end{proof}

\begin{corollary}\label{cordeg}
We have for all $\bw \in B,\w \in W$, $\deg S_\bw=\deg S_\w=g$, where $g$ is the genus of $\spectralcurve$.
\end{corollary}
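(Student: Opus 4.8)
The plan is to compute the degree of $S_\bw + S_\w$ (hence of each summand, since the two divisors play symmetric roles and we can average or simply note the answer will be independent of the choice of vertex) by taking the degree of the divisor identity in Corollary~\ref{divQbw}. The left-hand side $\div_\spectralcurve Q_{\bw\w}$ is the divisor of a global rational function on the complete curve $\spectralcurve$, so its degree is $0$. Therefore
\[
0 = \deg S_\bw + \deg S_\w - \deg \restr{D_N}{\spectralcurve} + \deg{\bf d}(\w) - \deg{\bf d}(\bw) + \sum_{\alpha \in Z}\deg\nu(\alpha).
\]
Now I would plug in the known degrees: $\deg{\bf d}(\w) = -1$ and $\deg{\bf d}(\bw) = 1$ by the normalization of the discrete Abel map recalled after its definition; each $\nu(\alpha)$ is a single point so $\deg\nu(\alpha) = 1$ and the sum over $\alpha \in Z$ contributes $\#Z$; and $\deg\restr{D_N}{\spectralcurve}$ is the intersection number $\spectralcurve \cdot D_N = \sum_\rho a_\rho\,(\spectralcurve\cdot D_\rho) = \sum_\rho a_\rho |E_\rho|$, using that the intersection index of a generic curve in $|D_N|$ with $D_\rho$ is $|E_\rho|$ as stated in Section~\ref{sec:toricvariety}. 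Since $\#Z = \sum_\rho \#Z_\rho = \sum_\rho |E_\rho|$ (the edges of $N$ have $|E_\rho|$ primitive vectors, matching the zig-zag paths in $Z_\rho$), the terms $\sum_\alpha \deg\nu(\alpha)$ and $\deg\restr{D_N}{\spectralcurve}$ do not obviously cancel unless $a_\rho = 1$ for all $\rho$; so instead I will carry both through symbolically. This gives
\[
0 = \deg S_\bw + \deg S_\w - \sum_\rho a_\rho|E_\rho| - 1 - 1 + \sum_\rho |E_\rho|,
\]
i.e. $\deg S_\bw + \deg S_\w = 2 + \sum_\rho (a_\rho - 1)|E_\rho|$.

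To finish I need to identify the right-hand side with $2g$. Here I would invoke the standard toric computation of the genus of a curve in $|D_N|$: by adjunction on the toric surface $X_N$ (or directly, Pick-type formulas for lattice polygons), the arithmetic genus of a generic member of $|D_N|$ equals the number of interior lattice points of $N$, which is $g$ by the paragraph in Section~\ref{sec:toricvariety}. Concretely, $\spectralcurve\cdot(\spectralcurve + K_{X_N}) = 2g - 2$, and expanding $\spectralcurve \sim D_N = \sum a_\rho D_\rho$ against $K_{X_N} = -\sum D_\rho$ together with the intersection numbers $D_\rho \cdot D_{\rho'}$ dictated by the combinatorics of $N$ yields $2g - 2 = \sum_\rho(a_\rho - 1)|E_\rho|$ (this is precisely the lattice-point count reorganized). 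Substituting, $\deg S_\bw + \deg S_\w = 2g$.

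It remains to see that each of $\deg S_\bw$ and $\deg S_\w$ is exactly $g$, not merely that they sum to $2g$. For this I use that $S_\w$ depends only on $\w$ (and similarly $S_\bw$ only on $\bw$): the displayed identity holds for \emph{every} pair $(\bw,\w)$, so fixing $\w$ and varying $\bw$ forces $\deg S_\bw$ to be a constant independent of $\bw$, and likewise $\deg S_\w$ independent of $\w$; call these constants $d_B$ and $d_W$ with $d_B + d_W = 2g$. To pin down $d_W = g$ I would appeal to the identification in Definition~2 of the spectral data: $\mathcal L \cong \mathcal O_\spectralcurve(S_\w + {\bf d}(\w))$ together with $\deg\mathcal L = g - 1$ (Proposition~\ref{degl}) and $\deg{\bf d}(\w) = -1$ gives $\deg S_\w = g$ immediately; symmetrically, the kernel line bundle $\mathcal M$ and Lemma~\ref{divbw} give $\deg S_\bw = g$. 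The main obstacle is bookkeeping: making sure the toric intersection numbers $\spectralcurve \cdot D_\rho = |E_\rho|$ and $\spectralcurve\cdot K_{X_N}$ are assembled correctly so that the genus formula comes out as $2g - 2 = \sum_\rho(a_\rho-1)|E_\rho|$, and confirming the normalization conventions $\deg{\bf d}(\bw) = 1$, $\deg{\bf d}(\w) = -1$ are used consistently. Once the degree-zero principle for $\div Q_{\bw\w}$ is combined with $\deg\mathcal L = g-1$, the result is essentially immediate.
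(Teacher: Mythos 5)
Your main computation is sound and is essentially the paper's: taking degrees in Corollary~\ref{divQbw}, using $\deg \div_{\spectralcurve} Q_{\bw\w}=0$, $\deg {\bf d}(\w)=-1$, $\deg{\bf d}(\bw)=1$, and the adjunction computation (your identity $2g-2=\sum_{\rho}(a_\rho-1)|E_\rho|$ is exactly the degree of $\omega_{\spectralcurve}=\restr{D_N}{\spectralcurve}-\sum_{\alpha\in Z}\nu(\alpha)$, which is how the paper phrases it) yields $\deg S_\bw+\deg S_\w=2g$; and since $S_\bw$ depends only on $\bw$ and $S_\w$ only on $\w$, both degrees are constants $d_B,d_W$ with $d_B+d_W=2g$. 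Up to this point the argument matches the paper's proof.

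The gap is in how you split $2g$ into $g+g$. You invoke Proposition~\ref{degl} ($\deg\mathcal L=g-1$) to conclude $\deg S_\w=g$, but this is circular: in the paper the only justification for $\deg\mathcal L=g-1$ (both where it is asserted in Definition~2 and in Proposition~\ref{degl} itself) is precisely the equality $\deg S_\w=g$, i.e.\ the corollary you are proving. Likewise there is no independent determination of $\deg\mathcal M$, so the claim that ``the kernel line bundle $\mathcal M$ and Lemma~\ref{divbw} give $\deg S_\bw=g$'' is equally unjustified. The paper closes the argument differently: the entire construction is symmetric under interchanging black and white vertices (replace $K$ by its transpose; the spectral curve and its genus are unchanged, and the divisor $S_\bw$ is the $S$-divisor of a white vertex of the color-swapped model), so $d_B=d_W$, whence both equal $g$. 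You gesture at this at the outset (``the two divisors play symmetric roles''), but the route you actually take is the circular one; spelling out the black--white symmetry, or giving any other argument for $d_B=d_W$ that does not pass through $\deg\mathcal L$ or $\deg\mathcal M$, is what is needed to complete the proof.
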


\begin{proof}
{Let $\omega_*$ denote the canonical divisor of $*$.} We have $\omega_{X_N} =-\sum_{\rho \in \Sigma(1)} D_\rho$ {(\cite{CLS}*{Theorem 8.2.3})}. By the adjunction formula, we get $\omega_{\spectralcurve} = \restr{D_N}{\spectralcurve} -\sum_{\alpha \in Z} \nu(\alpha)$. Since $Q_{\bw\w}$ is a rational function on $\spectralcurve$, we have $\text{deg div}_{\spectralcurve} Q_{\bw\w} =0$. Since $\deg ({\bf d}(\w)-{\bf d}(\bw))=-2$ and $\deg\omega_{\spectralcurve}=2g-2$, we get $\deg(S_\bw+S_\w)=2g$. By symmetry under interchanging $B$ and $W$, we get $\deg S_\bw=\deg S_\w=g$.
\end{proof}
Recall that the number $g$ is also equal to the number of interior lattice points in $N$ for generic $\spectralcurve \in |D_N|$.
\begin{proposition}\label{degl}
The line bundle $\mathcal L $ is isomorphic to $\mathcal O_{\spectralcurve}(S_\w+{\bf d}(\w))$ for any $\w \in W$. It has degree $g-1$.
\end{proposition}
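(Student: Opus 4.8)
The plan is to extract the line bundle $\mathcal L$ directly from the exact sequence (\ref{eq:kextc}) and identify it via its sections. From the sequence, $\mathcal L = \coker \overline K$ and, since $\overline Q$ has rank $1$ for generic weights, the $\w$-entry of the cokernel map gives a section $\overline s_\w$ of $\mathcal L \otimes \mathcal O_\spectralcurve({\bf d}(\w))^\vee$. By Lemma~\ref{divbw}, $\div_\spectralcurve \overline s_\w = S_\w$, which is effective of degree $g$ by Corollary~\ref{cordeg}. First I would conclude from this that $\mathcal L \otimes \mathcal O_\spectralcurve({\bf d}(\w))^\vee$ has a global section with divisor $S_\w$, hence is isomorphic to $\mathcal O_\spectralcurve(S_\w)$, and therefore
\[
\mathcal L \cong \mathcal O_\spectralcurve(S_\w + {\bf d}(\w)).
\]
One must check $\overline s_\w$ is not identically zero, which is exactly the genericity argument already run in the proof of Lemma~\ref{divbw} (if it vanished identically then $\overline Q_{\bw\w} = \overline s_\bw \overline s_\w = 0$ for all $\bw$, contradicting that some $Q_{\bw\w}$ is a nonzero dimer partition function). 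The independence of the choice of $\w$ is then automatic since the left-hand side does not involve $\w$; alternatively it follows from the linear equivalence $S_\w + {\bf d}(\w) \sim S_{\w'} + {\bf d}(\w')$ implied by the isomorphism.

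For the degree count: $\deg \mathcal L = \deg S_\w + \deg {\bf d}(\w) = g + (-1) = g-1$, using Corollary~\ref{cordeg} for $\deg S_\w = g$ and the normalization $\deg {\bf d}(\w) = -1$ recorded after the definition of the discrete Abel map. This is the convention referenced in Definition~2 of the spectral transform, so nothing new is needed.

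I don't expect a serious obstacle here; the only point requiring care is the genericity hypothesis ensuring $\spectralcurve$ smooth and $\overline K$ of corank exactly $1$ everywhere, so that $\mathcal L$ is genuinely a line bundle and $\overline Q_{\bw\w} = \overline s_\bw \overline s_\w$ as a factorization of sections — all of which is established in the discussion preceding Lemma~\ref{divbw} and in Corollary~\ref{divQbw}. The cleanest writeup simply cites Lemma~\ref{divbw} for $\div \overline s_\w = S_\w$, invokes the standard fact that a line bundle on a smooth curve admitting a section with divisor $E$ is $\mathcal O(E)$, and then reads off the degree from Corollary~\ref{cordeg}.
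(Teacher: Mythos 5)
Your proposal is correct and follows essentially the same argument as the paper: cite Lemma~\ref{divbw} to get that $\overline s_\w$ is a section of $\mathcal L \otimes \mathcal O_\spectralcurve({\bf d}(\w))^\vee$ with divisor $S_\w$, conclude $\mathcal L \cong \mathcal O_\spectralcurve(S_\w + {\bf d}(\w))$, and read off the degree from $\deg S_\w = g$ (Corollary~\ref{cordeg}) and $\deg {\bf d}(\w) = -1$. The extra remarks on nonvanishing of $\overline s_\w$ and genericity are fine but already implicit in the paper's setup.
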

\begin{proof}
By Lemma \ref{divbw}, $\overline s_\w$ is a section of $\mathcal L \otimes \mathcal O_\spectralcurve({\bf d}(\w ))^\vee$ with divisor $S_\w$. Therefore, we must have 
\[
 \mathcal L \otimes \mathcal O_\spectralcurve({\bf d}(\w ))^\vee \cong \mathcal O_\spectralcurve\left(S_\w\right),
\]
which implies $\mathcal L \cong \mathcal O_{\spectralcurve}(S_\w+{\bf d}(\w))$. Since $\deg S_\w=g$ and ${\rm{deg}}~{\bf d}(\w)=-1$, we get $\deg\mathcal L=g-1$.
\end{proof}

\section{Equations for the Laurent polynomial \texorpdfstring{$Q_{{\bw}{\bf w}}$}{Qbw}}\label{laurentsection}

Since $Q_{{\bw}{\bf w}}$ has Newton polygon $N_{\bw {\bf w}}$, we have 
\[
Q_{{\bw}{\bf w}}=\sum_{m \in N_{\bw {\bf w} } \cap {\rm M}}a_m \chi^m,
\]
for some $a_m \in \C$.
We know that $Q_{{\bw}{\bf w}}$ vanishes on $S_{\bf w}$, which gives $g$ linear equations among the $(a_m)_{m \in N_{\bw {\bf w} } \cap {\rm M}}$. However these $g$ linear equations are not usually sufficient to determine $Q_{{\bw}{\bf w}}$, so we need to find some additional equations. These additional equations will come from the vanishing of $Q_{{\bw}{\bf w}}$ at the points at infinity.

\subsection{{Additional linear equations for \texorpdfstring{$Q_{{\bw}{\bf w}}$}{Qbw}}}
 The fact that the Newton polygon of $Q_{{\bw}{\bf w}}$ is the small polygon $N_{{\bw}{\bf w}}$ imposes  certain inequalities on the order of vanishing of $Q_{{\bw}{\bf w}}$ at points at infinity of $\spectralcurve$. Corollary \ref{divQbw} imposes additional constraints that are linear equations in the coefficients of $Q_{{\bw}{\bf w}}$. Inverting this linear system gives $(a_m)_{m \in N_{\bw {\bf w} } \cap {\rm M}}$ and therefore $Q_{{\bw}{\bf w}}$. 

We now give the precise statement. For a $\Q$-divisor $D = \sum_{\rho \in \Sigma(1)} b_\rho D_\rho$, we define a ($\Z$-) divisor $\floor{D}:=\sum_{\rho \in \Sigma(1)} \floor{b_\rho} D_\rho$, where 
$\floor{x}$ is the largest integer $n$ such that $n \leq x$. It is the pushforward of $D$ by the canonical projection $\mathscr X_N \ra X_N$.
\begin{proposition}\label{lemextraeq}
The extra linear equations for $(a_m)_{m \in N_{\bw {\bf w} } \cap {\rm M}}$ from vanishing of $Q_{{\bw}{\bf w}}$ at points at infinity correspond to the points in 
\be \label{extraeq}
-\restr{D_N}{\spectralcurve}+{\bf d}({\bf w})-{\bf d}(\bw)+\sum_{\alpha \in Z} \nu(\alpha) + \restr{\floor{{\divebw_{{\bw }{\bf w}}}}}{\spectralcurve}.
\ee 
\end{proposition}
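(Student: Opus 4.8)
The plan is to recover the coefficients $(a_m)$ of $Q_{\bw\bw}$ from the constraint that, as a section of $\mathcal O_{\mathscr X_N}(\divebw_{\bw\bw})$, its restriction $\overline{Q}_{\bw\bw}$ to the spectral curve $\spectralcurve$ has a known divisor, namely the one computed in Corollary~\ref{divQbw}. Since a Laurent polynomial supported on the small polygon $N_{\bw\bw}$ automatically extends to the stack $\mathscr X_N$ with the prescribed divisor behavior \emph{away} from the points at infinity (by Corollary~\ref{Cor3.5} and Proposition~\ref{pro:globalsec}), the only nontrivial extra vanishing conditions are those forced at the points $\nu(\alpha)$ on $\spectralcurve$. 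The idea is: a generic section of $\mathcal O_{\mathscr X_N}(\divebw_{\bw\bw})$ vanishes at $\nu(\alpha)$ only to the order dictated by $\divebw_{\bw\bw}$ rounded down along the ray $\rho \ni \alpha$, but Corollary~\ref{divQbw} says the \emph{true} order of vanishing of $Q_{\bw\bw}$ at $\nu(\alpha)$ is generally larger; the discrepancy between these two numbers, summed over all $\alpha$, is exactly the effective divisor in~(\ref{extraeq}), and each unit of that divisor imposes one linear condition on $(a_m)$.

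First I would make precise the order of vanishing at $\nu(\alpha)$ of a generic element of $H^0(\mathscr X_N, \mathcal O_{\mathscr X_N}(\divebw_{\bw\bw}))$ restricted to $\spectralcurve$. Write $\divebw_{\bw\bw} = \sum_\rho b_\rho D_\rho$. Trivializing $\mathcal O_{\mathscr X_N}(\divebw_{\bw\bw})$ near $D_\rho$ and using that the pushforward to $X_N$ is $\floor{\divebw_{\bw\bw}} = \sum_\rho \floor{b_\rho} D_\rho$, one sees that $\restr{Q_{\bw\bw}}{\spectralcurve}$, viewed as a rational function, has a pole of order at most $\floor{b_\rho}$ along $D_\rho$, i.e. on the open dense part of $\spectralcurve$ it is a section of $\mathcal O_\spectralcurve(\restr{\floor{\divebw_{\bw\bw}}}{\spectralcurve})$ plus the contributions of the $S_{\bf w}$-vanishing. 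Concretely, the ``expected'' divisor of $Q_{\bw\bw}$ (with the $g$ vanishing conditions at $S_{\bf w}$ already imposed, but nothing else at infinity) is $\geq S_{\bf w} - \restr{\floor{\divebw_{\bw\bw}}}{\spectralcurve}$; for it to equal the actual divisor from Corollary~\ref{divQbw}, namely $S_\bw + S_{\bf w} - \restr{D_N}{\spectralcurve} + {\bf d}({\bf w}) - {\bf d}(\bw) + \sum_{\alpha\in Z}\nu(\alpha)$, the function must in addition vanish along
\[
\bigl(S_\bw + S_{\bf w} - \restr{D_N}{\spectralcurve} + {\bf d}({\bf w}) - {\bf d}(\bw) + \textstyle\sum_{\alpha} \nu(\alpha)\bigr) - \bigl(S_{\bf w} - \restr{\floor{\divebw_{\bw\bw}}}{\spectralcurve}\bigr),
\]
which, after the $S_\bw$ term (supported in $\spectralcurve^\circ$, hence irrelevant to conditions \emph{at infinity} and in any case automatically produced — see the discussion below) is dropped, is precisely the divisor~(\ref{extraeq}).

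Next I would argue that each point appearing in the effective divisor~(\ref{extraeq}) contributes exactly one linearly independent equation among the $(a_m)_{m\in N_{\bw\bw}\cap{\rm M}}$, and conversely that all vanishing-at-infinity conditions are of this form. The mechanism is the same as in the Casimir-row construction of Section~\ref{S2.1.2}: near $\nu(\alpha)$ with $\alpha\in Z_\rho$, choosing the basis $(x_1,x_2)$ of $\rm M$ with $x_1=[\alpha]$, $\langle x_2,u_\rho\rangle=1$, the monomial $\chi^m = x_1^{b_m}x_2^{c_m}$ has $x_2$-order determined by $c_m = \langle m, u_\rho\rangle$; the terms of $Q_{\bw\bw}$ whose $x_2$-order is minimal are exactly those with $m\in N^\rho_{\bw\bw}\cap{\rm M}$, and the leading coefficient of $\restr{Q_{\bw\bw}}{\spectralcurve}$ in the local parameter at $\nu(\alpha)$ is $\sum_{m\in N^\rho_{\bw\bw}}a_m C_\alpha^{-b_m}$. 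Requiring one more order of vanishing than the generic element forces this sum to be zero — which is exactly equation~(\ref{Cas}). I would then check that $Q_{\bw\bw}$ never needs to vanish to order \emph{two} more than generic at any single $\nu(\alpha)$: this is where the multiplicities in~(\ref{extraeq}) never exceed $1$ per point at infinity, which follows from the fact that $\divebw_{\bw\bw} - \floor{\divebw_{\bw\bw}}$ has coefficients in $[0,1)$ together with the explicit form of ${\bf d}(\w)-{\bf d}(\bw)$, $D_N$, and the $\sum\nu(\alpha)$ terms, and I would record this as a lemma. The subtlety — and I expect this to be the main obstacle — is the bookkeeping that shows the $S_\bw$ term genuinely disappears: one needs that the extension $\overline{K}$ automatically produces the $S_\bw + \sum_{\bw\notin\alpha}\nu(\alpha)$ vanishing of the kernel section $\overline s_\bw$ (Lemma~\ref{divbw}) for \emph{free}, so that the only independent linear conditions one must impose by hand on $(a_m)$ are the $g$ equations from $S_{\bf w}$ (type 1) plus the Casimir equations attached to the points in~(\ref{extraeq}) (type 2); put differently, one must verify that $\restr{\floor{\divebw_{\bw\bw}}}{\spectralcurve}$ precisely accounts for the gap between the sheaf-theoretic a~priori bound and Corollary~\ref{divQbw} with no over- or under-counting. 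I would handle this by comparing degrees: the divisor~(\ref{extraeq}) plus the $g$ type-1 conditions should total $\dim H^0(\mathscr X_N,\mathcal O(\divebw_{\bw\bw})) - 1 = \#(N_{\bw\bw}\cap{\rm M}) - 1$ conditions (counted with the right dependencies), matching Proposition~\ref{prop:uniq}'s claim of a one-dimensional solution space — and a Riemann–Roch / degree count on $\spectralcurve$ using $\deg S_{\bf w}=g$, $\deg({\bf d}(\w)-{\bf d}(\bw))=-2$, $\omega_\spectralcurve = \restr{D_N}{\spectralcurve}-\sum\nu(\alpha)$, and $\deg D_N|_\spectralcurve = 2g-2 + \#(Z)$ closes the loop.
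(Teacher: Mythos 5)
Your proposal is correct and follows essentially the same route as the paper: compare the a priori bound $\div_{\spectralcurve}\restr{F}{\spectralcurve}\geq -\restr{\floor{\divebw_{\bw{\bf w}}}}{\spectralcurve}$ at infinity for a generic Laurent polynomial supported on $N_{\bw{\bf w}}$ with the actual divisor of $Q_{\bw{\bf w}}$ from Corollary~\ref{divQbw}, and read off the discrepancy at infinity as (\ref{extraeq}), with the $S_\bw$, $S_{\bf w}$ terms dropping out since they are supported on $\spectralcurve^\circ$. The extra material you sketch (the local Casimir computation, multiplicity-one check, and the Riemann--Roch dimension count) belongs to Propositions~\ref{caseqn} and \ref{prop:uniq} rather than to this statement, but it does not affect the correctness of the core argument.
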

\begin{proof}
A generic Laurent polynomial $F$ of the form $\sum_{m \in N_{\bw {\bf w} } \cap {\rm M}}a_m \chi^m$ has order of vanishing
$$
\div_{\spectralcurve} \restr{F}{\spectralcurve} \geq -\restr{\floor{\divebw_{{\bw }{\bf w}}}}{\spectralcurve}
$$
at the points at infinity of $\spectralcurve$. From Corollary \ref{divQbw}, we have that $\div_{\spectralcurve} Q_{\bw {\bf w}}=S_\bw+S_{\bf w}-\restr{D_N}{\spectralcurve}+{\bf d}({\bf w})-{\bf d}(\bw)+\sum_{\alpha \in Z} \nu(\alpha).$ The discrepancy provides the extra equations.
\end{proof}

Now we describe these extra linear equations explicitly. Suppose $\alpha \in Z_\rho$ is a zig-zag path that contributes a linear equation. We extend $[\alpha]$ to a basis $([\alpha]=x_1,x_2)$ of $\rm M$ such that $\langle x_2,u_\rho \rangle =1$, so that for any $m \in {\rm M}$, we can write
\[
    \chi^m =  x_1^{b_m} x_2^{c_m}, ~~~~b_m,c_m \in \Z.
    \]
    Let $N_{\bw {\bf w}}^\rho$ be the set of lattice points in $N_{\bw {\bf w}}$ closest 
to the edge $E_{\rho}$ of $N$ i.e., the set of points in $N_{\bw {\bf w}}$ that minimize the functional $\langle *,u_\rho \rangle$. 

 \begin{proposition}\label{caseqn}
 Suppose $Q_{{\bw}{\bf w}}=\sum_{m \in N_{\bw {\bf w} } \cap {\rm M}}a_m \chi^m$ {and suppose $\alpha \in Z_\rho$ is a zig-zag path that contributes a linear equation.}
 Then, the linear equation given by $\alpha$ is: \[
 \sum_{m \in N^\rho_{\bw {\bf w} } \cap {\rm M}}a_m C_\alpha^{-b_m}=0.
\]
 \end{proposition}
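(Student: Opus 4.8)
The plan is to read off the equation from the local behavior of $Q_{\bw\mathbf w}$ at the point at infinity $\nu(\alpha)$, using the trivialization of the relevant line bundle near that point. By Proposition~\ref{lemextraeq}, if $\alpha\in Z_\rho$ contributes an equation then $Q_{\bw\mathbf w}$ is forced to vanish at $\nu(\alpha)$ to one order higher than a generic section of $\mathcal O_{\mathscr X_N}(\divebw_{\bw\mathbf w})$, i.e.\ one order higher than the generic vanishing order $-\floor{b_\rho}$ that is automatic from the Newton polygon constraint. So the content of the equation is: the leading term of $Q_{\bw\mathbf w}$ in the local coordinate at $\nu(\alpha)$ must vanish.

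First I would set up coordinates: extend $[\alpha]=x_1$ to a basis $(x_1,x_2)$ of $\mathrm M$ with $\langle x_2,u_\rho\rangle=1$, so on the affine chart $U_\rho=\spec\C[x_1^{\pm1},x_2]$ the line $D_\rho\cap U_\rho$ is $\{x_2=0\}$ and $x_1^{-1}=\chi^{-[\alpha]}$ is the coordinate on the open torus of $D_\rho$, with $\nu(\alpha)$ the point $x_1^{-1}=C_\alpha$, $x_2=0$ (as recalled in Section~\ref{sec:cas}). Writing $\chi^m=x_1^{b_m}x_2^{c_m}$ for $m\in N_{\bw\mathbf w}\cap\mathrm M$, the quantity $c_m=\langle m,u_\rho\rangle$ is minimized exactly on $N^\rho_{\bw\mathbf w}\cap\mathrm M$, with common minimal value $c_{\min}=\floor{-b_\rho}$ (the integrality of the floor is what forces us to work on $X_N$ resp.\ $\mathscr X_N$). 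Thus
\[
Q_{\bw\mathbf w}=\sum_{m\in N_{\bw\mathbf w}\cap\mathrm M}a_m x_1^{b_m}x_2^{c_m}
= x_2^{c_{\min}}\Bigl(\sum_{m\in N^\rho_{\bw\mathbf w}\cap\mathrm M}a_m x_1^{b_m}\Bigr)+x_2^{c_{\min}+1}(\cdots).
\]
Restricting to $\spectralcurve$ near $\nu(\alpha)$ and trivializing $\mathcal O_U(k\,\nu(\alpha))$ via $f\mapsto u^k f$ as in the proof of Lemma~\ref{divbw} (where $u$ is a uniformizer at $\nu(\alpha)$; note $x_2$ vanishes to order equal to the intersection multiplicity, which for a smooth point on a primitive-direction line is $1$, and more generally this contributes to the floor bookkeeping), the hypothesis that $Q_{\bw\mathbf w}$ vanishes one order higher than generic says precisely that the coefficient of $x_2^{c_{\min}}$ — namely the one-variable Laurent polynomial $\sum_{m\in N^\rho_{\bw\mathbf w}\cap\mathrm M}a_m x_1^{b_m}$ — vanishes at $x_1^{-1}=C_\alpha$, i.e.\ at $x_1=C_\alpha^{-1}$. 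Substituting $x_1=C_\alpha^{-1}$ gives exactly
\[
\sum_{m\in N^\rho_{\bw\mathbf w}\cap\mathrm M}a_m C_\alpha^{-b_m}=0,
\]
which is the claimed equation. Finally, I would check the independence of the chosen $x_2$: replacing $x_2$ by $x_2\cdot x_1^{j}$ changes each $b_m$ by a constant $\langle$shift$\rangle$ times... more precisely it shifts $b_m\mapsto b_m$ while rescaling, so the equation is multiplied by an overall monomial $C_\alpha^{\text{const}}$, hence unchanged as a linear condition — this is the remark already made after equation (\ref{Cas}).

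The main obstacle is the careful bookkeeping of orders of vanishing in the non-primitive (stacky) case: one must be sure that ``generic section of $\mathcal O(\divebw_{\bw\mathbf w})$ vanishes to order exactly $-\floor{b_\rho}$ along $D_\rho$, hence on $\spectralcurve$ pulls back to order $-\floor{b_\rho}$ times the local intersection index,'' and that the ``one order higher'' from Proposition~\ref{lemextraeq} really lands on the $x_2$-degree-$c_{\min}$ coefficient rather than being absorbed by the floor. This requires matching the divisor computation of Corollary~\ref{divQbw} against the polygon-to-divisor dictionary (\ref{equivalence}) and the pushforward formula $\floor{D}$ relating $\mathscr X_N$ and $X_N$; once that matching is made explicit the rest is the elementary substitution above.
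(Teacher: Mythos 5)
Your proposal is correct and follows essentially the same route as the paper: work in the chart $U_\rho=\spec\C[x_1^{\pm1},x_2]$, use transversality so $x_2$ is a uniformizer at $\nu(\alpha)$, expand $Q_{\bw{\bf w}}$ in powers of $x_2$ so that the leading coefficient is the one-variable Laurent polynomial over $N^\rho_{\bw{\bf w}}\cap{\rm M}$, and conclude from the extra order of vanishing forced by Proposition~\ref{lemextraeq} that this coefficient vanishes at $x_1^{-1}=C_\alpha$. The only blemishes are cosmetic: the common minimal value of $\langle m,u_\rho\rangle$ on lattice points is $-\floor{b_\rho}$ rather than $\floor{-b_\rho}$, and the basis-change check is garbled, but neither affects the argument since only the common value and its vanishing are used.
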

 
 \begin{proof}
 The affine open variety in $X_N$ corresponding to the cone $\rho$ is 
 \[
 U_\rho=\spec \C[x_1^{\pm 1 },x_2] \cong \C^\times \times \C,
 \]
 and $D_\rho \cap U_\rho$ is defined by $x_2=0$.
 
A generic curve $\spectralcurve$ meets $D_\rho$ transversely at $\nu(\alpha)$, and therefore we may take $x_2$ as a uniformizer of the local ring $\mathcal O_{\spectralcurve, \nu(\alpha)}$ at $\nu(\alpha)$. For each $m \in N^\rho_{\bw {\bf w} } \cap {\rm M}$, we have
$$
   \chi^m = x_1^{b_m} x_2^p, ~~~~b_\gamma,p \in \Z, 
    $$ 
where $p$ is the same for all of them, and is the coefficient of $\nu(\alpha)$ in $-\restr{[{E_{{\bf b}w}}]}{\spectralcurve}$. Then using $x_1^{-1}=C_\alpha$ at $\nu(\alpha)$, we have
\be \label{qbweqn}
Q_{{\bw}{\bf w}}=  \left(\sum_{m \in N^\rho_{\bw {\bf w} } \cap {\rm M}} a_m C_\alpha^{-b_m} \right)x_2^p+O(x_2^{p+1}).
\ee
Since $\alpha$ contributes a linear equation, (\ref{qbweqn}) must vanish at order $x_2^p$, {so $ \sum_{m \in N^\rho_{\bw {\bf w} } \cap {\rm M}}a_m C_\alpha^{-b_m}=0$.} 
 \end{proof}

\subsection{The system of linear equations \texorpdfstring{$\mathbb V_{{\bw}{\bf w}}$}{Vbw}}

{
	Recall from Section~\ref{sec2} the system of linear equations $\mathbb V_{{\bw}{\bf w}}$. These are linear equations in the variables $(a_m)_{m \in N_{\bw {\bf w} } \cap {\rm M}}$. Recall also that }the matrix $\mathbb V_{{\bw}{\bf w}}$ is defined such that these equations are given by $$\mathbb V_{{\bw}{\bf w}}(a_m)=0.$$ It is not necessarily a square matrix. However, we have:

\begin{proposition} \la{prop:uniq}
For generic spectral data, $Q_{{\bw}{\bf w}}$ is the unique solution of the linear system of equations $\mathbb V_{{\bw}{\bf w}}$ modulo scaling.
\end{proposition}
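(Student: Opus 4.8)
The plan is to prove that for generic spectral data the solution space of $\mathbb V_{\bw {\bf w}}$ is one-dimensional; since $Q_{\bw {\bf w}}$ is a nonzero solution — it vanishes on $S_{\bf w}$ by the definition of the spectral transform (the type-$1$ equations), and it satisfies the type-$2$ equations by Corollary~\ref{divQbw} together with Propositions~\ref{lemextraeq} and~\ref{caseqn}, which were tailored so that $Q_{\bw {\bf w}}$ solves them — this gives the proposition. So I would take an arbitrary solution $F=\sum_{m\in N_{\bw {\bf w}}\cap{\rm M}}a_m\chi^m$ and show $F\in\C\cdot Q_{\bw {\bf w}}$. The first and central step is to translate every equation of $\mathbb V_{\bw {\bf w}}$ into a single inequality of divisors on the curve, for $\restr{F}{\spectralcurve}$ regarded as a rational function on $\spectralcurve$. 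Because $F$ has Newton polygon inside $N_{\bw {\bf w}}$, its restriction to $\spectralcurve$ is regular on $\spectralcurve^\circ$ and $\mathrm{ord}_{\nu(\alpha)}\restr{F}{\spectralcurve}\ge -\bigl(\restr{\floor{\divebw_{\bw {\bf w}}}}{\spectralcurve}\bigr)_{\nu(\alpha)}$ at infinity. The type-$1$ equations make $\restr{F}{\spectralcurve}$ vanish at the $g$ points of $S_{\bf w}$; the type-$2$ equation attached to a zig-zag path $\alpha$ occurring in (\ref{extraeq}) makes, by the local expansion (\ref{qbweqn}) in the proof of Proposition~\ref{caseqn}, the order of vanishing of $\restr{F}{\spectralcurve}$ at $\nu(\alpha)$ strictly exceed the generic value, hence reach at least the coefficient of $\nu(\alpha)$ in $D_\infty:=-\restr{D_N}{\spectralcurve}+{\bf d}({\bf w})-{\bf d}(\bw)+\sum_{\alpha\in Z}\nu(\alpha)$, the part at infinity of $\div_\spectralcurve Q_{\bw {\bf w}}$ from Corollary~\ref{divQbw}. (Here one uses that each $\nu(\alpha)$ appears in (\ref{extraeq}) with coefficient at most $1$, so that a single equation per such $\alpha$ accounts for the whole discrepancy.) Combining, $\div_\spectralcurve\restr{F}{\spectralcurve}\ge S_{\bf w}+D_\infty$.

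Next I would divide by $Q_{\bw {\bf w}}$. For generic spectral data $\spectralcurve$ is smooth and $\restr{Q_{\bw {\bf w}}}{\spectralcurve}\not\equiv0$ (by Lemma~\ref{divbw}, $\coker\overline K$ has rank $1$), so if $\restr{F}{\spectralcurve}\ne0$ then $h:=\restr{F}{\spectralcurve}/\restr{Q_{\bw {\bf w}}}{\spectralcurve}$ is a rational function on $\spectralcurve$. By Corollary~\ref{divQbw}, $\div_\spectralcurve Q_{\bw {\bf w}}=S_\bw+S_{\bf w}+D_\infty$, so Step~1 gives $\div_\spectralcurve h\ge -S_\bw$, i.e. $h\in H^0\bigl(\spectralcurve,\mathcal O_\spectralcurve(S_\bw)\bigr)$. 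By Corollary~\ref{cordeg}, $\deg S_\bw=g$; and for generic spectral data $S_\bw$ is a general effective degree-$g$ divisor on the genus-$g$ curve $\spectralcurve$, hence nonspecial, so $h^1\bigl(\mathcal O_\spectralcurve(S_\bw)\bigr)=h^0\bigl(\omega_\spectralcurve(-S_\bw)\bigr)=0$ ($\deg\omega_\spectralcurve(-S_\bw)=g-2$, and a general line bundle of degree $<g$ on $\spectralcurve$ has no sections), and Riemann--Roch gives $h^0\bigl(\mathcal O_\spectralcurve(S_\bw)\bigr)=1$. Thus $h$ is a constant $c$ and $\restr{F}{\spectralcurve}=c\,\restr{Q_{\bw {\bf w}}}{\spectralcurve}$; if $\restr{F}{\spectralcurve}=0$, set $c=0$.

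Finally I would promote this to Laurent polynomials. The difference $F-c\,Q_{\bw {\bf w}}$ has Newton polygon inside $N_{\bw {\bf w}}$ and vanishes on $\spectralcurve^\circ=\{P=0\}$, so (with $P$ irreducible for generic weights) $F-c\,Q_{\bw {\bf w}}=P\cdot G$, whence $N+N(G)\subseteq N_{\bw {\bf w}}$ and a translate of $N=N(P)$ lies inside $N_{\bw {\bf w}}$. This cannot happen: $\divebw_{\bw {\bf w}}-D_N=\dd(\bw)-\sum_{\rho\in\Sigma(1)}\sum_{\alpha\in Z_\rho:\,\bw\in\alpha}\frac1{|E_\rho|}D_\rho$ restricts to a divisor of negative degree on $\spectralcurve$, equal to $2-\deg_\Gamma(\bw)<0$ (using $\deg_\spectralcurve\restr{D_\rho}{\spectralcurve}=|E_\rho|$, $\deg_\spectralcurve\restr{\dd(\bw)}{\spectralcurve}=\deg{\bf d}(\bw)-\deg{\bf d}({\bf w})=2$, and that $\bw$ lies on $\deg_\Gamma(\bw)\ge3$ zig-zag paths), while $\deg_\spectralcurve\restr{D_N}{\spectralcurve}=2g-2+\#Z>0$; so the polygon $N_{\bw {\bf w}}$, whose divisor restricts to strictly smaller degree than $D_N$ on $\spectralcurve$, contains no translate of $N$. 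Hence $G=0$, $F=c\,Q_{\bw {\bf w}}$, and the solution space is exactly $\C\cdot Q_{\bw {\bf w}}$.

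I expect the main obstacle to be the divisor repackaging in the first paragraph — in particular verifying that each zig-zag path appearing in (\ref{extraeq}) contributes precisely one equation that matches the full jump of $\mathrm{ord}_{\nu(\alpha)}Q_{\bw {\bf w}}$ there (equivalently, that the coefficients of (\ref{extraeq}) are $\le1$), which is exactly where the care invested in defining the small polygons and in Corollary~\ref{divQbw} pays off. A secondary delicate point is the genericity input in the second paragraph, namely that for generic dimer weights $S_\bw$ is nonspecial, so that $|S_\bw|=\{S_\bw\}$.
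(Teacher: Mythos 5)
Your first step (existence, plus the repackaging of the two types of equations into the single divisor inequality $\div_\spectralcurve\restr{F}{\spectralcurve}\ge S+D$ with $D=-\restr{D_N}{\spectralcurve}+{\bf d}({\bf w})-{\bf d}(\bw)+\sum_{\alpha\in Z}\nu(\alpha)$) is exactly the paper's argument; the only thing left implicit is that generic spectral data lies in the image of $\kappa_{\Gamma,{\bf w}}$, which the paper gets from dominance (\cite{GK12}*{Theorem 7.3}). After that you genuinely diverge, and each of your two replacements carries a gap that the paper's choices are designed to avoid. For uniqueness on the curve the paper does not divide by $Q_{\bw{\bf w}}$: it regards $\restr{F}{\spectralcurve}$ as a section of $\mathcal O_\spectralcurve(-D)$ vanishing on $S$, computes $h^0(\mathcal O_\spectralcurve(-D))=g+1$ by Riemann--Roch with $h^1=0$ coming for free (the degree of $\omega_\spectralcurve(D)$ is $-2$), and then only needs the $g$ points of $S$ --- which are part of the spectral data, hence legitimately generic and may be assumed to avoid the base locus --- to impose independent conditions. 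Your route instead needs $h^0(\mathcal O_\spectralcurve(S_\bw))=1$, i.e.\ nonspecialty of $S_\bw$. That is not a formal consequence of genericity of $(\spectralcurve,S,\nu)$, since $S_\bw$ is not part of the data; you would have to argue separately that $S_\bw$ varies generically, e.g.\ by applying the dominance theorem to the color-swapped dimer model, whose Kasteleyn matrix is $K^T$ and whose spectral divisor is exactly $S_\bw$. As written this is an unproven (true, and fillable) assertion, and it is the main gap in your second paragraph.

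The second divergence is the lift from the curve back to Laurent polynomials. The paper proves Lemma~\ref{cohlem}, i.e.\ $H^0(X_N,\floor{\divebw_{\bw{\bf w}}}-D_N)=0$, so that restriction of $H^0(X_N,\floor{\divebw_{\bw{\bf w}}})$ to $\spectralcurve$ is injective. Your substitute --- $F-c\,Q_{\bw{\bf w}}=PG$ would force a translate of $N$ inside $N_{\bw{\bf w}}$, which you exclude by a degree count on $\spectralcurve$ --- is a legitimate alternative, and your choice of functional is the right one (since $\sum_{\rho}|E_\rho|u_\rho=0$, one has $\deg\restr{\div\chi^m}{\spectralcurve}=0$, so translates are invisible). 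But the count itself is off in one place: the degree of $\restr{(\divebw_{\bw{\bf w}}-D_N)}{\spectralcurve}$ equals $2-\#\{\text{distinct zig-zag paths containing }\bw\}$, not $2-\deg_\Gamma(\bw)$, because the sum in (\ref{DE}) runs over paths $\alpha$ with $\bw\in\alpha$, each counted once even if $\alpha$ visits $\bw$ more than once; and the bound ``at least three distinct paths through $\bw$'' is not forced by the paper's zig-zag definition of minimality (an uncontracted degree-two black vertex passes those tests, and if such a $\bw$ is adjacent to ${\bf w}$ then $N_{\bw{\bf w}}$ is a translate of $N$ and $\chi^mP$ is a second, non-proportional solution of $\mathbb V_{\bw{\bf w}}$ --- so this nondegeneracy is genuinely doing work). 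You should either justify that count for the graphs under consideration or simply invoke Lemma~\ref{cohlem}; it is fair to note that the paper's own proof of that lemma makes the analogous implicit assumption when it uses a zig-zag path through $\bw$ avoiding the chosen edge $\bw\w'$.
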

\begin{remark}
 \begin{enumerate}
 \item {While the definition of $\mathbb V_{{\bw}{\w}}$ makes sense for all $\w \in W$,  Proposition \ref{prop:uniq} only holds when $\w = {\bf w}$ since $(p_i,q_i)_{i=1}^g$ depends on $\bf w$}.
 \item For generic spectral data, the equations (\ref{eq1}) are linearly independent, but the equations (\ref{Cas}) may not be.
  \end{enumerate}
\end{remark}
The rest of this {section} is devoted to the proof of Proposition \ref{prop:uniq}. 
Consider following the exact sequence on $X_N$, obtained by tensoring the closed embedding exact sequence of $i:\spectralcurve \hookrightarrow X_N$ by $\mathcal O_{X_N}(\floor{\divebw_{\bw{\bf w}}})$.
$$
0 \ra \mathcal O_{X_N}(\floor{\divebw_{\bw{\bf w}}}-D_N) \ra \mathcal O_{X_N}(\floor{\divebw_{\bw{\bf w}}}) \ra i_* \restr{\mathcal O_\spectralcurve(\floor{\divebw_{\bw{\bf w}}}}{\spectralcurve}) \ra 0.
$$
The following is a portion of the long exact sequence of cohomology.
\be \la{coh}
0 \ra H^0(X_N,\floor{\divebw_{\bw{\bf w}}}-D_N) \ra H^0(X_N,\floor{\divebw_{\bw{\bf w}}}) \ra H^0(\spectralcurve,\restr{\floor{\divebw_{\bw{\bf w}}}}{\spectralcurve}).
\ee
We need the following technical lemma.
\begin{lemma}\la{cohlem}
The restriction map $H^0(X_N,\floor{\divebw_{\bw{\bf w}}}) \ra H^0(\spectralcurve,\restr{\floor{\divebw_{\bw{\bf w}}}}{\spectralcurve})$ is injective.
\end{lemma}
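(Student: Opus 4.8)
From the long exact sequence \eqref{coh}, the restriction map is injective precisely when $H^0(X_N, \floor{\divebw_{\bw{\bf w}}} - D_N) = 0$. So the plan is to show that this cohomology group vanishes, i.e.\ that the polygon associated to the $\Q$-divisor $\floor{\divebw_{\bw{\bf w}}} - D_N$ contains no lattice points. By the dictionary \eqref{divisorpolygonbijection} between divisors at infinity and polygons with rational intercepts, and Proposition~\ref{pro:globalsec}, $H^0(X_N, \floor{\divebw_{\bw{\bf w}}} - D_N)$ has a basis indexed by the lattice points of the polygon $P$ cut out by the inequalities $\langle m, u_\rho\rangle \geq -(\floor{b_\rho} - a_\rho)$, where $\divebw_{\bw{\bf w}} = \sum_\rho b_\rho D_\rho$ and $D_N = \sum_\rho a_\rho D_\rho$. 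Since $\floor{b_\rho} \le b_\rho$, this polygon is contained in the polygon $P'$ cut out by $\langle m, u_\rho \rangle \geq -(b_\rho - a_\rho)$, so it suffices to show $P'$ contains no lattice point — in fact it suffices to show $P'$ is empty, or more precisely that $P' \cap {\rm M} = \emptyset$.

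To identify $P'$, I would use the defining formula \eqref{DE} for $\divebw_{\bw {\bf w}}$: we have $\divebw_{\bw {\bf w}} - D_N = -\dd(\w) + \dd(\bw) - \sum_{\rho}\sum_{\alpha \in Z_\rho: \bw \in \alpha}\frac{1}{|E_\rho|}D_\rho$. By the correspondence \eqref{equivalence}, the polygon of a difference of divisors is (roughly) the ``difference'' of the corresponding polygons in the sense of support functions; more usefully, the polygon $P'$ associated to $\divebw_{\bw {\bf w}} - D_N$ is $\{m : \langle m, u_\rho\rangle \geq -(b_\rho - a_\rho) \text{ for all }\rho\}$, and one computes $b_\rho - a_\rho$ directly from the Abel map data. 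The key point is that $\dd(\bw) - \dd(\w)$, evaluated via Lemma~\ref{lem:DAM}, together with the subtraction of one more $\frac{1}{|E_\rho|}D_\rho$ for each zig-zag through $\bw$, forces the total multiplicity $b_\rho - a_\rho$ to be strictly negative for every $\rho$ (or at least, negative enough that no lattice functional can satisfy all the inequalities simultaneously). Concretely: $Q_{\bw\w}$ has degree $-1$ less than $\det K$ at every line at infinity in a suitable averaged sense, and subtracting $D_N$ on top of that pushes every intercept below zero; the half-integer shifts from the stacky structure mean that after taking floors we still land strictly below $0$. I would make this precise by showing $\floor{b_\rho} - a_\rho \le -1$ for all $\rho$, whence $P \subseteq \{m : \langle m, u_\rho \rangle \geq 1 \ \forall \rho\}$, which is empty since the $u_\rho$ positively span ${\rm M}^\vee_\R$ (as $N$ has interior, $\Sigma$ is complete).

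Alternatively — and this may be cleaner — I would argue via the spectral curve directly rather than the ambient toric surface. The image of the restriction map contains $Q_{\bw{\bf w}}$ itself (a nonzero section), and Corollary~\ref{divQbw} pins down $\div_\spectralcurve Q_{\bw{\bf w}}$ exactly; combined with $\deg$ considerations (Corollary~\ref{cordeg}) one sees that the space $H^0(\spectralcurve, \restr{\floor{\divebw_{\bw{\bf w}}}}{\spectralcurve})$ and $H^0(X_N, \floor{\divebw_{\bw{\bf w}}})$ have controlled dimensions, and a dimension count forces injectivity. But the vanishing argument for $H^0(X_N, \floor{\divebw_{\bw{\bf w}}} - D_N)$ is the most robust route and the one I would write up.

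\textbf{Expected main obstacle.} The bookkeeping in verifying $\floor{b_\rho} - a_\rho \le -1$ for every ray $\rho$ is the crux. This requires carefully tracking how the rational Abel map $\dd$ distributes fractional multiplicities $\frac{1}{|E_\rho|}$ over the lines at infinity, how many zig-zag paths in $Z_\rho$ pass through a given black vertex $\bw$, and the interaction with the floor function when $|E_\rho| > 1$ (the non-simple case). One has to rule out the borderline situation where $b_\rho - a_\rho$ is a small negative fraction but $\floor{b_\rho} - a_\rho$ could conceivably be $0$; showing this never happens is where the geometry of minimal graphs (every zig-zag path hits the relevant edges the expected number of times, no parallel bigons) must be invoked, presumably echoing the computation in the proof of Proposition~\ref{main::lem} and Lemma~\ref{detpoly}.
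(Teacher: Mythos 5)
Your reduction is the same as the paper's: via the long exact sequence (\ref{coh}), injectivity is equivalent to $H^0(X_N,\floor{\divebw_{\bw{\bf w}}}-D_N)=0$. The gap is in how you propose to prove this vanishing. Your concrete plan — show $\floor{b_\rho}-a_\rho\le-1$ for every ray $\rho$, so that the polygon of $\floor{\divebw_{\bw{\bf w}}}-D_N$ sits inside $\{m:\langle m,u_\rho\rangle\ge 1\ \forall\rho\}=\emptyset$ — is false already in the paper's simplest running example. For the square lattice (Example \ref{eg:smallply1}) one has $\divebw_{\bw_1\w_1}=D_{\rho(\beta)}+D_{\rho(\gamma)}$ and $D_N=D_{\rho(\alpha)}+D_{\rho(\beta)}+D_{\rho(\gamma)}+D_{\rho(\delta)}$, so $\floor{\divebw_{\bw_1{\bf w}}}-D_N=-D_{\rho(\alpha)}-D_{\rho(\delta)}$: two of the coefficients are $0$, not $\le -1$. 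The vanishing still holds there, but only because of the interplay between \emph{different} rays (the inequalities $\langle m,u_{\rho(\alpha)}\rangle\ge 1$ and $\langle m,u_{\rho(\gamma)}\rangle\ge 0$, with $u_{\rho(\gamma)}=-u_{\rho(\alpha)}$, are jointly infeasible). So no ray-by-ray estimate can work; your hedge ("negative enough that no lattice functional satisfies all the inequalities") is the true statement, but it is exactly what remains unproven, and the "expected main obstacle" you flag (borderline floors in the non-simple case) is not where the problem lies.

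What the paper does instead is a genuinely global, one-line-functional argument: if $\chi^m$ were a section, then $\div\chi^m+\divebw_{\bw{\bf w}}-D_N\ge 0$, and one evaluates the total (zig-zag-weighted) sum of coefficients of this divisor. The $\div\chi^m$ part contributes $0$ because a closed cycle of class $m$ has total signed intersection zero with the union of all zig-zag paths (their homology classes sum to zero); writing $-\dd({\bf w})+\dd(\bw)-\sum_{\alpha\ni\bw}\frac{1}{|E_\rho|}D_\rho$ as $(\dd(\w')-\dd({\bf w}))-\sum_{\alpha\ni\bw,\ \bw\w'\notin\alpha}\frac{1}{|E_\rho|}D_\rho$ for a white neighbour $\w'$ of $\bw$, the first bracket also contributes $0$ (a path between two white vertices has zero total signed intersection with the zig-zag paths), while the last term contributes a strictly negative amount. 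An effective divisor has nonnegative coefficient sum, contradiction. Your fallback suggestion (a dimension count on $\spectralcurve$ using Corollaries \ref{divQbw} and \ref{cordeg}) does not repair this: knowing that $Q_{\bw{\bf w}}$ lies in the image says nothing about injectivity, and comparing $h^0$ on the surface and on the curve cannot give injectivity without controlling precisely the kernel term $H^0(X_N,\floor{\divebw_{\bw{\bf w}}}-D_N)$, which is the whole content of the lemma.
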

\begin{proof}
If $\chi^m \in H^0( X_N,\floor{\divebw_{\bw{\bf w}}}-D_N)$, then $\div\chi^m +\floor{\divebw_{\bw{\bf w}}}-D_N \geq 0$. This implies that 
\begin{align}\la{inter}
\div\chi^m +\divebw_{{\bw}{\bf w}}-D_N=\sum_{\rho \in \Sigma(1)}\sum_{\alpha \in Z_\rho} \langle m,u_\rho \rangle  \frac{D_{\rho}}{|E_{\rho}|}- \dd({\bf w})+\dd(\bw)-\sum_{\rho \in \Sigma(1)}\sum_{\alpha  \in Z_\rho:{\bw} \in \alpha }\frac{1}{|E_\rho|}D_\rho \geq 0.
\end{align} 
Let $\gamma$ be a cycle in $\T$ with homology class $m$. The total number of signed intersections of $\gamma$ with all zig-zag paths is zero. {This number is the sum of the coefficients of $\sum_{\rho \in \Sigma(1)}\sum_{\alpha \in Z_\rho} \langle m,u_\rho \rangle  \frac{D_{\rho}}{|E_{\rho}|}$.} Let $\w'$ be any white vertex adjacent to $\bw$. Then we have
$$
-\dd({\bf w})+\dd(\bw)-\sum_{\rho \in \Sigma(1)}\sum_{\alpha  \in Z_\rho:{\bw} \in \alpha }\frac{1}{|E_\rho|}D_\rho = (\dd(\w')-\dd({\bf w}))-\sum_{\rho \in \Sigma(1)}\sum_{\substack{\alpha \in Z_\rho: { \bw} \in \alpha\\  \bw \w' \notin \alpha  }}\frac{1}{|E_\rho|}D_\rho.
$$
{The sum of the coefficients of} $\dd(\w')-\dd({\bf w})$ {is} the signed number of intersections with zig-zag paths of any path in $R$ from ${\bf w}$ to $\w'$, {which is also $0$}. Since the {coefficients of the} last term $-\sum_{\rho \in \Sigma(1)}\sum_{\substack{\alpha \in Z_\rho: { \bw} \in \alpha\\  \bw \w' \notin \alpha  }}\frac{1}{|E_\rho|}D_\rho$ {are} strictly negative, the sum in (\ref{inter}) cannot be non-negative. Therefore, $H^0(X_N,\floor{\divebw_{\bw{\bf w}}}-D_N)=0$, which by (\ref{coh}) means that the map $H^0(X_N,\floor{\divebw_{\bw{\bf w}}}) \ra H^0(\spectralcurve,\restr{\floor{\divebw_{\bw{\bf w}}}}{\spectralcurve})$ is injective.
\end{proof}

\begin{proof}[Proof of Proposition \ref{prop:uniq}]
\begin{enumerate}
    \item Existence: By Theorem 7.3 of \cite{GK12},
   the map  $\kappa_{\Gamma,{\bf w}}$ is dominant. So a generic spectral data is in the image of $\kappa_{\Gamma,{\bf w}}$. For such a spectral data, $Q_{{\bw}{\bf w}}$ satisfies:
   \begin{enumerate}
       \item The system of equations (\ref{eq1}) because, by definition of the spectral transform, $Q_{{\bw}{\bf w}}$ vanishes at the points of the divisor $S=\sum_{i=1}^g (p_i,q_i)$.
       \item The equations (\ref{Cas}) by Proposition \ref{caseqn}.
   \end{enumerate}

   \item Uniqueness: Suppose $V_{{\bw}{\bf w}}$ is a solution of $\mathbb V_{{\bw}{\bf w}}$. {Since $V_{{\bw}{\bf w}}$ has Newton polygon $N_{\bw {\bf w}}$, we have $\div_{\spectralcurve} \restr{F}{\spectralcurve} \geq -\restr{\floor{\divebw_{{\bw }{\bf w}}}}{\spectralcurve}$ as in the proof of Proposition~\ref{lemextraeq}. The additional equations in Proposition~\ref{lemextraeq} then imply that 
}
   $$
   \div_{\spectralcurve}V_{{\bw}{\bf w}} \geq S+D,
   $$
   where $D:=-\restr{D_N}{\spectralcurve}+{\bf d}({\bf w})-{\bf d}(\bw)+\sum_{\alpha \in Z} \nu(\alpha)$ satisfies $\deg D = -2g$.
 Therefore, $\restr{V_{{\bw}{\bf w}}}{\spectralcurve}$ can be identified with a section of $\mathcal O_{\spectralcurve}(-D)$ vanishing at the points of $S$. {Let $\omega_\spectralcurve$ denote the canonical divisor of $\spectralcurve$ as in Section~\ref{extensionsection}.} By the Riemann-Roch theorem,
   $$
   h^0(\spectralcurve,\mathcal O_{\spectralcurve}(-D)-h^1(\spectralcurve,\mathcal O_{\spectralcurve}(-D))=\deg(-D)-g+1=g+1.
   $$
   By Serre duality, $h^1(\spectralcurve,\mathcal O_{\spectralcurve}(-D))=h^0(\spectralcurve, \omega_{\spectralcurve}(D))$, which equals $0$ since $\omega_{\spectralcurve}(D)$ has negative degree $-2$. For generic $S$ that avoids the base locus of $\mathcal O_{\spectralcurve}(-D)$, the requirement that the section of $\mathcal O_{\spectralcurve}(-D)$ corresponding to $V_{{\bw}{\bf w}}$ vanishes at each of the $g$ points of $S$ imposes $g$ independent conditions, and therefore determines $\restr{V_{{\bw}{\bf w}}}{\spectralcurve}$ uniquely up to multiplication by a nonzero complex number. By Lemma \ref{cohlem}, $V_{{\bw}{\bf w}}$ is unique up to multiplication by a nonzero complex number. \qedhere
\end{enumerate}
\end{proof}
\begin{remark}
It is easy to see using Riemann-Roch that the number of equations in $\mathbb V_{{\bw}{\bf w}}$ is equal to $h^0(\spectralcurve,\restr{\floor{\divebw_{\bw{\bf w}}}}{\spectralcurve})-1$. On the other hand, the number of variables is $h^0(X_N,{\floor{\divebw_{\bw{\bf w}}}}{})$. However, the map in Lemma \ref{cohlem} is not necessarily an isomorphism (there may be sections on the curve that are not restrictions of sections on the surface), so we only have the inequality
\[
\#~\text{equations in}~\mathbb V_{{\bw}{\bf w}} \geq \#~\text{variables}-1.
\]
\end{remark}

\appendix
\section{Toric geometry} \la{A}
{
In \ref{toricv} and \ref{A2}, we give a brief background on toric varieties; further details can be found in the books \cite{Fulton} and \cite{CLS}.
}
\subsection{Toric varieties}\la{toricv}

A \textit{toric variety} $X$ over $\C$  is an algebraic variety containing the complex algebraic torus ${\rm T} \cong (\C^\times)^n$  as a Zariski open subset, such that the action of ${\rm T}$ on itself extends to an action of ${\rm T}$ on $X$. 

Let ${\rm M}$ be a lattice,   and let ${\rm M}^\vee:=\text{Hom}_\Z({\rm M},\Z)$ denote the dual lattice. Let  
${\rm T}:={\rm M}^\vee\otimes \C^\times = {\rm Hom}({\rm M}, \C^\times)$  
 be the complex algebraic torus   with the   lattice of characters ${\rm M}$. 
We denote by  $\chi^m:{\rm T} \ra \C^\times $  the character  associated to $m \in {\rm M}$. 
Let $\langle *, *\rangle$ be the pairing between ${\rm M} $ and ${\rm M}^\vee$. 
In our case   ${\rm M} = H_1(\T,\Z){\cong\Z^2}$,  so ${\rm M}^\vee = H^1(\T,\Z){\cong\Z^2}$ and ${\rm T} = H^1(\T,\C^\times){\cong(\C^\times)^2}$. {We have $\chi^{(i,j)}(z,w) = z^iw^j$}.

A \textit{fan} $\Sigma$ is a collection of cones in the real vector space ${\rm M}^\vee_\R:= {\rm M}^\vee \otimes_\Z \R$, which is just the Lie algebra of the real torus ${\rm T}(\R)$,  such that 
\begin{enumerate}
    \item Each face of a cone $\sigma \in \Sigma$ is also in $\Sigma$.
    \item The intersection of two cones $\sigma_1,\sigma_2 \in \Sigma$ is a face of each of them.
\end{enumerate}
Each cone $\sigma \in  \Sigma$ gives rise to an affine toric variety
$$
{\rm U}_\sigma = \spec \C[S_\sigma],
$$
where $S_\sigma = \sigma^\vee \cap {\rm M}$ is a semigroup,   $\sigma^\vee$ is the cone dual to $\sigma$, and $\C[S_\sigma]$  is its semigroup algebra:
$$
\C[S_\sigma]=\left\{\sum_{m \in S_\sigma} c_m \chi^m: c_m \in \C, c_m=0 \text{ for all but finitely many }m \in S_\sigma \right\}.
$$
  If $\tau \subset \sigma,$ then ${\rm U}_\tau$ is an open subset of ${\rm U}_\sigma$. Gluing the affine toric varieties ${\rm U}_{\sigma_1},{\rm U}_{\sigma_2}$ along ${\rm U}_{\sigma_1 \cap \sigma_2}$ for all cones $\sigma_1,\sigma_2 \in \Sigma$, we get the toric variety $X_\Sigma$ associated to $\Sigma$. 
 
In particular,  if $\sigma = \{0\}$, then $S_\sigma={\rm M}$, so $\C[S_\sigma]=\C[{\rm M}]$ and ${\rm U}_\sigma = {\rm T}$. So $X_\Sigma$ contains ${\rm T}$.

We define the action ${\rm T} \times {\rm U}_\sigma \lra {\rm U}_\sigma$ via the dual map of the algebras of   functions: 
\begin{align*}
    \C[S_\sigma] &\longrightarrow \C[{\rm M}] \otimes \C[S_\sigma],  \\
      \chi^m &\longmapsto \chi^m \otimes \chi^m. 
\end{align*}
When $\sigma = \{0\}$, this is the action of ${\rm T}$ on itself. The action of ${\rm T}$ on ${\rm U}_\sigma$ is compatible with the gluing, and therefore gives an action of ${\rm T}$ on $X_\Sigma$.

We denote by $\Sigma(r)$ the set of $r$-dimensional cones of $\Sigma$. There is an inclusion-reversing bijection between ${\rm T}$-orbit closures in $X_\Sigma$ and cones in $\Sigma$. Under this bijection, each ray $\rho \in \Sigma(1)$ corresponds to a ${\rm T}$-invariant divisor $D_\rho$. Let $u_\rho$ be
 the primitive vector generating $\rho$. Then, the (Weil) divisor 
 of the character $\chi^m$ is
\be
\div \chi^m=\sum_{\rho \in \Sigma(1)} \langle m, u_\rho \rangle D_\rho. \la{divchar}
\ee
The following fundamental exact sequence  computes the class group of Weil divisors of $X_\Sigma$:
\begin{align}
0 \ra {\rm M} &\ra \Z^{\Sigma(1)} \ra \text{Cl}(X_\Sigma) \ra 0, \label{cl::gp}\\
m &\mapsto (\langle m, u_\rho \rangle )_{\rho \in \Sigma(1)}.\nonumber 
\end{align}

\subsection{Polygons and projective toric surfaces} \la{A2}
Given a  convex integral polygon $N$ in the  plane  ${\rm M}_\R:={\rm M} \otimes_\Z \R$, 
we   construct  the \textit{normal fan $\Sigma$} of $N$ as follows:
\begin{enumerate}
    \item $\Sigma(0)=\{0\}$. 
    \item For each edge $E_\rho$ of $N$, let $u_\rho\in {\rm M}^\vee$ be the primitive inward normal vector to   $E_\rho$, providing an element of $\Sigma(1)$ given by the ray spanned by  $u_\rho$.
    \item For each vertex $v$ of $N$, we get an element of $\Sigma(2)$ by taking the convex hull of the two rays in $\Sigma(1)$ associated to the two edges incident to $v$ in $N$. 
\end{enumerate} 

The normal fan $\Sigma$ gives rise to a toric surface denoted below by $X_N$. The orbit-cone correspondence  assigns to each edge $E_\rho$ of $N$  a divisor $D_\rho \cong \P^1$. These divisors   intersect according to the combinatorics of $N$. Their union is   the {\it divisor at infinity}  $X_N - {\rm T}$.

In fact the polygon $N$ determines   a pair $(X_N,D_N)$, where  $D_N$ is an ample divisor at infinity:
\[
D_N:=\sum_{\rho \in \Sigma(1)} a_\rho D_\rho, 
\]
where $a_\rho$ is such that the edge $E_\rho $ of $N$ is contained in the line $\{m \in {\rm M} \otimes \R : \langle m,u_\rho \rangle=-a_\rho \}$.

The linear system of hyperplane sections $|D_N|$ has the following properties:
\begin{enumerate}
    \item $H^0(X_N,{\cal O}_{X_N}(D_N)) \cong \bigoplus_{m \in N \cap M } \C \cdot \chi^m$.
    \item The   genus of a generic curve $C$ in $|D_N|$  is   the number of interior lattice points of $N$.
    \item Curves in $|D_N|$ intersect the divisor $D_\rho$ with multiplicity $|E_\rho|$ {(the number of primitive vectors in $E_\rho$)}.
\end{enumerate}

\subsection{Toric stacks} \la{A3}

{We follow \cite{BH09}*{Section 2}}. Given a convex integral polygon $N \subset {\rm M}_\R$, we define a \textit{stacky fan} $\boldsymbol{\Sigma}$ as the following data:
\begin{enumerate}
    \item The normal fan $\Sigma$ of $N$, defined above.
    \item For each ray $\rho \in \Sigma(1),$ the vector $|E_\rho|u_\rho$ generating the ray $\rho.$ 
\end{enumerate}

We define a fan $\widetilde \Sigma \subset \R^{\Sigma(1)}$ as follows: for $\sigma \in \Sigma$, we define $\widetilde \sigma \in \widetilde \Sigma$ by
$$
\widetilde \sigma = \cone (  e_\rho: \rho \in \sigma(1)) \subset \R^{\Sigma(1)},
$$
where $ \{e_\rho\}$ is the standard basis in   $\rho$ in $\R^{\Sigma(1)}$, and $\sigma(1)$ denotes the rays of $\Sigma$ incident to $\sigma$. Then $\widetilde \Sigma$ is the fan generated by the cones $\widetilde \sigma$ and their faces.  

Let $U_\Sigma$ be the toric variety of the fan $\widetilde \Sigma$. It is of the form $\C^{\Sigma(1)}-
(\text{closed codimension $2$ subset})$.

Consider the following map, modifying the map (\ref{cl::gp})  for polygons $N$ with a non-primitive side:
\begin{align*}
    \beta:{\rm M} &\ra \Z^{\Sigma(1)}\\
    m & \mapsto (|E_\rho| \langle m, u_\rho \rangle)_\rho.
\end{align*}
 Applying the functor $\text{Hom}_\Z(*,\C^\times)$, we get a surjective map $(\C^\times)^{\Sigma(1)} \ra {\rm T} $. Denote by $G$ its kernel. So there is an exact sequence
\be \la{eq:gexpl}
 1 \ra G \ra (\C^\times)^{\Sigma(1)} \ra {\rm T} \ra 1. 
\ee
So $G$ is a subgroup of the torus $(\C^\times)^{\Sigma(1)}$ of the toric variety $U_\Sigma$. Therefore, $G$ acts on $U_\Sigma$.

Explicitly, $\lambda=(\lambda_\rho) \in (\C^\times)^{\Sigma(1)}$ is in $G$ if and only if 
\be 
\prod_{\rho \in \Sigma(1)} \lambda_\rho^{|E_\rho|\langle m,u_\rho \rangle}=1
\ee
for all $m \in {\rm M}$. Let $z=(z_\rho) \in \C^{\Sigma(1)}$ denote the standard coordinates on $ \C^{\Sigma(1)}$. The action of $G$ on $U_{\Sigma}$ is $\lambda \cdot z = (\lambda_\rho z_\rho)$.

\bd The \textit{toric stack} $\mathscr X_N$ is the smooth {Deligne-Mumford} stack $\left[U_\Sigma/G \right]$.
\ed

\subsection{Example: a stacky \texorpdfstring{$\mathbb P^2$}{P2}.}

Consider the polygon $N$ given by  the convex-hull of $\{(0,0),(2,0),(0,2)\}$. The rays of its normal fan $\Sigma$ are
 generated by $u_1=(1,0), u_2=(0,1), u_3=(-1,-1)$ with $|E_1|=|E_2|=|E_3|=2$. The fan $\widetilde \Sigma \subset \R^3$ is generated by the cones
$$
\widetilde \sigma_1=\cone ( {e_2}, {e_3}), \quad \widetilde \sigma_2=\cone ( {e_1}, {e_3}),\quad \widetilde \sigma_3=\cone ( {e_1}, {e_2}),
$$
and their faces, where $\{e_i\}$ is the standard basis of $\R^3$. These cones define affine varieties
\[
U_1=\spec \C[X_1^{\pm 1},X_2,X_3], \quad U_2=\spec \C[X_1,X_2^{\pm 1},X_3], \quad U_3=\spec \C[X_1,X_2,X_3^{\pm 1}],
\]
respectively. The face $\widetilde \sigma_{12}:=\widetilde \sigma_{1}\cap \widetilde \sigma_{2}=\cone (e_3)$ defines the affine variety 
$U_{12}=\spec \C[X_1^{\pm 1},X_2^{\pm 1},X_3]$, identified with $U_1 \cap U_2$. Similarly, we define $U_{23}$ and $U_{13}$. Gluing $U_i$ {and $U_j$} along the $U_{ij}$ {for all $i,j$}, we see that the toric variety $U_\Sigma$ of $\widetilde \Sigma$ is $\mathbb C^3-0$. 
The map  $M \ra \Z^{\Sigma(1)}$ is 
\begin{align*}
    \Z^2 &\ra \Z^{3}\\
    (1,0) &\mapsto (2,0,-2)\\
    (0,1) &\mapsto (0,2,-2).
\end{align*}
The group $G$ is the kernel of 
\begin{align*}
    (\C^\times)^3 &\ra (\C^\times)^{2}\\
    (t_1,t_2,t_3) &\mapsto {\left(  \left(\frac{t_1}{t
_3}\right)^2,\left(\frac{t_2}{t
_3}\right)^2
\right).}
\end{align*}

Thus, $G=\{(\pm \lambda,\pm \lambda,\lambda):\lambda \in \C^\times\}$ and it acts on $\C^3 -  0$ by multiplication. The quotient $[\C^3 - 0/G]$ is a stacky $\P^2$.

\subsection{Line bundles and divisors on toric stacks} \la{A5}

A line bundle on the quotient stack $\mathscr X_N=[U_\Sigma/G]$ is the same thing as a $G$-equivariant line bundle on $U_\Sigma$. The Picard group of $U_\Sigma$ is trivial, so line bundles on $\mathscr X_N$ correspond to the various $G$-linearizations of $\mathcal O_{U_\Sigma}$.

\begin{proposition}[Borisov and Hua, 2009 \cite{BH09}*{Proposition 3.3}]
There is an isomorphism, describing the  Picard group of $\mathscr X_N$    via    divisors   $D_\rho$: 
\begin{align*}
    \Z^{\Sigma(1)}/ \beta^* {\rm M} &\cong  {\rm{Pic}}~\mathscr X_N, \\
    (b_\rho)_\rho &\mapsto \mathcal O_{\mathscr X_N}
    \Bigl( \sum_{\rho \in \Sigma(1)} \frac{b_\rho}{|E_\rho|} D_\rho \Bigr).
\end{align*}
The line bundle $\mathcal O_{\mathscr X_N}
    \Bigl( \sum_{\rho \in \Sigma(1)} \frac{b_\rho}{|E_\rho|} D_\rho \Bigr)$ is the trivial line bundle $\mathcal O_{U_\Sigma} = U_\Sigma \times \C$ with the $G$-linearization
\begin{align*}
G \times (U_\Sigma \times \C) &\ra U_\Sigma \times \C\\
\lambda \cdot (z,t) &\mapsto \left(\lambda \cdot z,t \prod_{\rho \in \Sigma(1)} \lambda_\rho^{{b_\rho}} \right).
\end{align*}
\end{proposition}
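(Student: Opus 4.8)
This is \cite{BH09}*{Proposition 3.3}, so the plan is simply to recall its proof. The starting point is that, since $\mathscr X_N = [U_\Sigma/G]$, a line bundle on $\mathscr X_N$ is the same datum as a $G$-equivariant line bundle on $U_\Sigma$, so I would first reduce the whole statement to a computation with characters of $G$. The two facts that make this reduction work are $\operatorname{Pic}(U_\Sigma) = 0$ and $\mathcal O^*(U_\Sigma) = \C^\times$: both hold because $U_\Sigma$ is $\C^{\Sigma(1)}$ with a closed subset of codimension $\ge 2$ removed, so restriction from the smooth variety $\C^{\Sigma(1)}$ is an isomorphism on Picard groups and on global units, and affine space has trivial Picard group and only constant invertible functions. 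Granting this, every $G$-equivariant line bundle is $\mathcal O_{U_\Sigma}$ with a $G$-linearization; a linearization is a cocycle $G \to \mathcal O^*(U_\Sigma) = \C^\times$, i.e.\ a group homomorphism $\chi\colon G \to \C^\times$, acting by $\lambda\cdot(z,t) = (\lambda\cdot z,\chi(\lambda)t)$; and two of them give isomorphic equivariant bundles iff the homomorphisms agree, because $\operatorname{Aut}(\mathcal O_{U_\Sigma})$ is the central group $\C^\times$. Hence $\operatorname{Pic}\mathscr X_N \cong X^*(G)$, the character group of $G$.

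The second step is to compute $X^*(G)$ by dualizing the exact sequence (\ref{eq:gexpl}), $1 \to G \to (\C^\times)^{\Sigma(1)} \to {\rm T} \to 1$. Applying the character-group functor --- an exact anti-equivalence between diagonalizable group schemes and finitely generated abelian groups, so that no $\operatorname{Ext}^1$ term obstructs exactness --- and using that (\ref{eq:gexpl}) is itself $\operatorname{Hom}_\Z(-,\C^\times)$ of $0 \to {\rm M}\xrightarrow[]{\beta^*}\Z^{\Sigma(1)} \to \coker\beta^* \to 0$, I recover the exact sequence $0 \to {\rm M}\xrightarrow[]{\beta^*}\Z^{\Sigma(1)} \to X^*(G) \to 0$ (identifying $X^*((\C^\times)^{\Sigma(1)}) = \Z^{\Sigma(1)}$ and $X^*({\rm T}) = {\rm M}$, and checking the induced map is $\beta^*$). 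Thus $X^*(G) \cong \Z^{\Sigma(1)}/\beta^*{\rm M}$, with the class of $(b_\rho)$ corresponding to the character $\lambda\mapsto\prod_\rho\lambda_\rho^{b_\rho}$ of $G$; this is well defined on the quotient because $\beta^*(m) = (|E_\rho|\langle m,u_\rho\rangle)_\rho$ gives the character $\lambda\mapsto\prod_\rho\lambda_\rho^{|E_\rho|\langle m,u_\rho\rangle}$, which restricts to the trivial character of $G$ by the defining property of $G$ in (\ref{eq:gexpl}).

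The last step is to identify the equivariant bundle attached to $(b_\rho)$ with the $\Q$-divisor $\sum_\rho\tfrac{b_\rho}{|E_\rho|}D_\rho$, and by $\Z$-linearity it is enough to treat $(b_{\rho'}) = \delta_{\rho\rho'}$. On $U_\Sigma$ the toric prime divisor $\{z_\rho = 0\}$ is $G$-invariant and principal (cut out by the coordinate $z_\rho$); using the stacky structure encoded by the ray generators $|E_\rho|u_\rho$, i.e.\ the $|E_\rho|$-th power appearing in the coarse moduli map $U_\Sigma \to X_N$, $(z_\rho)\mapsto H\cdot(z_\rho^{|E_\rho|})$, one checks that the $\Q$-divisor $\tfrac1{|E_\rho|}D_\rho$ on $\mathscr X_N$ pulls back under the quotient map $U_\Sigma \to \mathscr X_N$ to $\{z_\rho = 0\}$, so $\mathcal O_{\mathscr X_N}(\tfrac1{|E_\rho|}D_\rho)$ is $\mathcal O_{U_\Sigma}(\{z_\rho = 0\})$ with its natural linearization. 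Trivializing the latter by its generating section $z_\rho^{-1}$, on which $G$ acts through $\lambda\mapsto\lambda_\rho$, this linearization is the character $\delta_\rho$; summing over $\rho$ yields the asserted formula, and the ambiguity modulo $\beta^*{\rm M}$ matches $\div\chi^m = \sum_\rho\langle m,u_\rho\rangle D_\rho$, i.e.\ $(|E_\rho|\langle m,u_\rho\rangle)_\rho = \beta^*(m)$. The conceptual content is modest; the step I expect to require the most care is this last one --- correctly tracking the factor $|E_\rho|$ relating $D_\rho$ on $X_N$ to $\{z_\rho = 0\}$ on $U_\Sigma$ (this is exactly where the stackiness enters) and keeping the sign conventions for the $G$-action on rational sections consistent throughout.
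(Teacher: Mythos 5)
Your proposal is correct. Note, however, that the paper does not actually prove this statement: it is quoted verbatim from Borisov--Hua (\cite{BH09}*{Proposition 3.3}) and used as a black box, so there is no ``paper proof'' to compare against step by step. What you have written is a sound reconstruction of the standard argument: (i) line bundles on $\mathscr X_N=[U_\Sigma/G]$ are $G$-equivariant line bundles on $U_\Sigma$; (ii) since $U_\Sigma$ is $\C^{\Sigma(1)}$ minus a closed subset of codimension $\geq 2$, $\operatorname{Pic}(U_\Sigma)=0$ and all units are constant, so linearizations of $\mathcal O_{U_\Sigma}$ are exactly characters of $G$ and distinct characters give non-isomorphic equivariant bundles; (iii) dualizing the exact sequence (\ref{eq:gexpl}) identifies the character group of $G$ with $\Z^{\Sigma(1)}/\beta^*{\rm M}$, using that the character functor is exact on diagonalizable groups and that $M\to\Z^{\Sigma(1)}$ is injective (which holds here because $N$ has interior, so the $u_\rho$ span ${\rm M}^\vee_\R$ --- worth saying explicitly); and (iv) matching the class of $(b_\rho)$ with the $\Q$-divisor $\sum_\rho \frac{b_\rho}{|E_\rho|}D_\rho$ via the reduced divisor $\{z_\rho=0\}$ on $U_\Sigma$, where the factor $|E_\rho|$ is exactly the ramification of the coarse map. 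Your computation is also consistent with the paper's own proof of Proposition \ref{pro:globalsec}: with the linearization $\lambda\cdot(z,t)=(\lambda\cdot z,\,t\prod_\rho\lambda_\rho^{b_\rho})$, the invariant monomial sections are $\prod_\rho z_\rho^{a_\rho}$ with $a_\rho=|E_\rho|\langle m,u_\rho\rangle+b_\rho$, which is a useful sanity check on the sign conventions you flag in step (iv). The only caveat is that this final divisor-versus-character bookkeeping is exactly where an inverted sign would silently change $b_\rho$ to $-b_\rho$; pinning the convention by checking it against Proposition \ref{pro:globalsec}, as above, closes that loophole.
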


Let $D=\sum_{\rho \in \Sigma(1)} \frac{b_\rho}{|E_\rho|} D_\rho$ be a divisor at infinity  on  $\mathscr X_N$. We assign to $D$  a polygon $P_D$ in  ${\rm M}_\R$ defined by the intersection of the half planes provided by the coefficients of $D$:
\begin{align}
P_D:= \bigcap_{\rho \in \Sigma(1)}\left\{m \in {\rm M}_\R: \langle m,u_\rho \rangle \geq -\frac{b_\rho}{|E_\rho|}  \right\}. \label{divisorpolygonbijection}
\end{align}
A global section of a line bundle on $\mathscr X_N$ is the same thing as a $G$-invariant global section of $\mathcal O_{U_\Sigma}$. As in the case of toric varieties, global sections of toric line bundles are identified with integral points in the associated polygons: 

\begin{proposition} [Borisov and Hua, 2009 \cite{BH09}*{Proposition 4.1}]\la{pro:globalsec}
We have \[
H^0(\mathscr X_N, \mathcal O_{\mathscr X_N}(D))\cong \bigoplus_{m \in   P_D \cap {\rm M}} \C \cdot \chi^m.\]
The $G$-invariant section of $\mathcal O_{U_\Sigma}$ corresponding to $\chi^m, m \in P_D \cap {\rm M}$, is $\prod_{\rho \in \Sigma(1)} z_\rho^{a_\rho}$, where $a_\rho = |E_\rho|\langle m,u_\rho \rangle +b_\rho$.
\end{proposition}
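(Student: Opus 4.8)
The plan is to unwind Proposition~\ref{pro:globalsec} into an elementary computation of semi-invariant monomials on $U_\Sigma$. By the first displayed proposition of Section~\ref{A5} (\cite{BH09}*{Proposition~3.3}), the line bundle $\mathcal O_{\mathscr X_N}(D)$ with $D=\sum_\rho \frac{b_\rho}{|E_\rho|}D_\rho$ is the trivial bundle $\mathcal O_{U_\Sigma}$ on $U_\Sigma$ carrying the $G$-linearization $\lambda\cdot(z,t)=(\lambda z,\,t\prod_\rho\lambda_\rho^{b_\rho})$. Hence, since $\mathscr X_N=[U_\Sigma/G]$, a global section of $\mathcal O_{\mathscr X_N}(D)$ is the same datum as a regular function $f$ on $U_\Sigma$ satisfying $f(\lambda z)=\prod_\rho\lambda_\rho^{b_\rho}\,f(z)$ for every $\lambda\in G$, i.e.\ a $G$-semi-invariant of the weight prescribed by $b=(b_\rho)$.

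First I would identify the ring of regular functions: $U_\Sigma$ is $\C^{\Sigma(1)}$ with a closed subset of codimension $\ge 2$ removed, and $\C^{\Sigma(1)}$ is normal, so Hartogs' extension gives $H^0(U_\Sigma,\mathcal O_{U_\Sigma})=\C[z_\rho:\rho\in\Sigma(1)]$. Thus the global sections of $\mathcal O_{\mathscr X_N}(D)$ form the appropriate weight subspace of the polynomial ring under the diagonal $G$-action. Because $G\subset(\C^\times)^{\Sigma(1)}$ acts diagonally, this polynomial ring is graded by the character group $\mathrm{Hom}(G,\C^\times)$, the monomial $z^a:=\prod_\rho z_\rho^{a_\rho}$ sitting in the graded piece indexed by the image of $a$. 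Applying the functor $\mathrm{Hom}(-,\C^\times)$ — exact here since $\C^\times$ is a divisible abelian group — to the exact sequence (\ref{eq:gexpl}) yields $\mathrm{Hom}(G,\C^\times)\cong\Z^{\Sigma(1)}/\beta({\rm M})$, and shows that the characters of $(\C^\times)^{\Sigma(1)}$ restricting trivially to $G$ are exactly those of the form $\lambda\mapsto\prod_\rho\lambda_\rho^{|E_\rho|\langle m,u_\rho\rangle}$ with $m\in{\rm M}$.

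Next I would pin down the semi-invariant monomials. For $f=z^a$ one has $f(\lambda z)=\prod_\rho\lambda_\rho^{a_\rho}f(z)$, so the semi-invariance condition reads $\prod_\rho\lambda_\rho^{a_\rho-b_\rho}=1$ on $G$, forcing $a_\rho-b_\rho=|E_\rho|\langle m,u_\rho\rangle$ for some $m\in{\rm M}$, that is $a_\rho=|E_\rho|\langle m,u_\rho\rangle+b_\rho$. Requiring $z^a$ to be an honest polynomial, $a_\rho\ge 0$ for all $\rho$, is precisely $\langle m,u_\rho\rangle\ge -b_\rho/|E_\rho|$, which by (\ref{divisorpolygonbijection}) means $m\in P_D\cap{\rm M}$. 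Since each $\mathrm{Hom}(G,\C^\times)$-graded piece of $\C[z_\rho]$ is spanned by monomials, the space of global sections is exactly $\bigoplus_{m\in P_D\cap{\rm M}}\C\cdot z^{a(m)}$ with $a(m)_\rho=|E_\rho|\langle m,u_\rho\rangle+b_\rho$; and under $U_\Sigma\to\mathscr X_N\supset{\rm T}$ the function $z^{a(m)}$ descends, in the chosen trivialization over ${\rm T}$, to the character $\chi^m$. This gives the stated isomorphism together with the explicit formula for the $G$-invariant sections.

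I expect no serious obstacle; the one point demanding care is the character-theoretic bookkeeping when $G$ is disconnected, so that $\mathrm{Hom}(G,\C^\times)$ carries torsion — one must genuinely invoke exactness of $\mathrm{Hom}(-,\C^\times)$ on (\ref{eq:gexpl}) to conclude that a monomial is $G$-invariant for the fixed linearization if and only if its exponent vector lies in the \emph{coset} $b+\beta({\rm M})$, and not merely in its rational span. The only other technical input is the Hartogs argument identifying $H^0(U_\Sigma,\mathcal O_{U_\Sigma})$ with the full polynomial ring $\C[z_\rho]$, which is routine given that the removed locus has codimension at least two.
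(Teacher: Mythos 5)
Your proposal is correct and follows essentially the same route as the paper: identify $H^0(\mathscr X_N,\mathcal O_{\mathscr X_N}(D))$ with the $G$-semi-invariant polynomials on $U_\Sigma$ via the linearization of Proposition 3.3, test invariance on monomials, and use exactness of (\ref{eq:gexpl}) to conclude the exponent vector lies in the coset $b+\beta({\rm M})$, with nonnegativity of exponents giving $m\in P_D\cap{\rm M}$. You merely make explicit some steps the paper leaves tacit (the Hartogs identification $H^0(U_\Sigma,\mathcal O_{U_\Sigma})=\C[z_\rho]$, divisibility of $\C^\times$ for the character-group exactness, and the monomial spanning of each weight space), so no substantive difference.
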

\begin{proof}
We have $H^0(U_\Sigma,\mathcal O_{U_\Sigma})=\C[z_\rho:\rho \in \Sigma(1)]$. The global section $\prod_{\rho \in \Sigma(1)} z_\rho^{a_\rho}$ is $G$-invariant if and only if 
\[
\prod_{\rho \in \Sigma(1)}\lambda_\rho^{{b_\rho}} \cdot \prod_{\rho \in \Sigma(1)}z_\rho^{a_\rho}=\prod_{\rho \in \Sigma(1)}(z_\rho \lambda_\rho)^{a_\rho}~\text{for all $\rho \in \Sigma(1)$},
\]
which is equivalent to the equations $\prod_{\rho \in \Sigma(1)}\lambda_\rho^{b_\rho-a_\rho}=1$ for all $\rho \in \Sigma(1)$. By exactness of (\ref{eq:gexpl}), this is equivalent to the existence of $m \in {\rm M}$ such that $a_\rho-{b_\rho}={|E_\rho|}\langle m,u_\rho \rangle$ {for all $\rho \in \Sigma(1)$}.
\end{proof}

\section{Combinatorial rules for the linear system of equations \texorpdfstring{$\mathbb V_{\bw \w}$}{Vbw}} \la{B}

In this appendix, we collect some combinatorial rules that facilitate the computation of the small polygons and equations in $\mathbb V_{\bw \w}$.

\subsection{Equivalent description of the small polygons}

Consider the lines  
\be \la{line}
L_{\rho}:= \{ m \in {\rm M}_\R: \langle m,u_\rho \rangle =-b_\rho\} 
\ee
that form the boundary of the small Newton polygon $N_{\bw \w}$.  We give an alternate description of these lines.  Recall that $\widetilde \Gamma$ be the biperiodic graph on the plane given by the lift of $\Gamma$ to the universal cover of $\T$. The zig-zag paths in $\widetilde \Gamma$ for a given $\rho$ divide the plane into an infinite collection of strips $\mathscr S_\rho(d)$  parameterized by $d \in \frac 1 {|E_\rho|} \mathbb Z$ such that 
\[
\mathscr S_\rho(d) \cap V(\widetilde \Gamma)=\{{\rm v} \in V(\widetilde \Gamma): {[D_\rho]}\dd({\rm v})=d\},
\]
{where for a divisor $D$, $[D_\rho] D$ denotes the coefficient of $D_\rho$ in $D$.} We assign to each strip $\mathscr S_\rho(d)$  a line $L_\rho(d)$ in ${\rm M}_\R$ parallel to $E_\rho$, using the following rule illustrated on  Figure \ref{zzlocQ}:

\begin{enumerate}

\item The line associated to a strip  ${\mathscr  S}_\rho(d)$  contains the side $E_\rho$ if and only if either

\begin{itemize}
\item [i)]  The strip ${\mathscr S}_\rho(d)$ is on the right {(when facing in the direction of the path)} of a zig-zag path $\alpha_1 \in Z_\rho$, and   $\alpha_1$ contains $\bw$. 

\item[ii)] The strip ${\mathscr  S}_\rho(d)$ contains   $\bw$, and  $\bw$ is not in a zig-zag path in $Z_\rho$, or
\end{itemize}

\item Moving to the strip to the left shifts the line $1/|E_\rho|$ steps to the left.
\end{enumerate}
We call the strip to the left of the one whose line contains $E_\rho$, and all strips obtained by  its translations by $H_1(\T,\Z)$, \emph{exceptional strips}.

\begin{proposition}\la{propsmalleq}
If we associate lines to strips as above, the boundary of the small Newton polygon $N_{\bw \w}$ is given by the lines $\{L_\rho(d_\rho)\}$, where $d_\rho \in \frac 1 {|E_\rho|}\Z$ is determined by the condition $\w \in \mathscr S_\rho(d_\rho)$, that is, it is the index of the strip containing $\w$ in the direction $\rho$.
\end{proposition}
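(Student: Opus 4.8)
The plan is to show that the line $L_\rho(d_\rho)$ attached to the strip containing $\w$ coincides with the line $L_\rho = \{m : \langle m, u_\rho\rangle = -b_\rho\}$ from \eqref{line}, where $b_\rho = [D_\rho]\,\divebw_{\bw\w}$ is the coefficient of $D_\rho$ in the divisor $\divebw_{\bw\w}$ defined in \eqref{DE}. Since both lines are parallel to $E_\rho$ (both are level sets of $\langle *, u_\rho\rangle$), it suffices to compare the single number $\langle *, u_\rho\rangle$ on each, i.e.\ to show $b_\rho$ equals the value of $\langle *, u_\rho \rangle$ determined by the combinatorial rule. So the whole proposition reduces, ray by ray, to one scalar identity.

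First I would unwind the combinatorial rule into a formula. The rule says: among the strips $\mathscr S_\rho(d)$, the one whose associated line contains $E_\rho$ (call its index $d_\rho^0$) is characterized by condition (1i)/(1ii): it is the strip immediately to the right of the zig-zag path $\alpha_1 \in Z_\rho$ through $\bw$ (taking any such $\alpha_1$), or the strip containing $\bw$ if $\bw$ lies on no path in $Z_\rho$. In terms of $\dd$: $[D_\rho]\dd(\bw)$ equals $d_\rho^0$ in case (1ii), and equals $d_\rho^0 - \tfrac{1}{|E_\rho|}\cdot(\#\{\alpha\in Z_\rho: \bw\in\alpha\})$ once one accounts, via rule (2) and Lemma~\ref{lem:DAM}-type bookkeeping, for each path through $\bw$ contributing a shift; more precisely, by the local rules defining $\dd$, crossing each $\alpha \in Z_\rho$ through $\bw$ changes $[D_\rho]\dd$ by $\tfrac{1}{|E_\rho|}$, so $[D_\rho]\dd(\bw) = d_\rho^0 - \tfrac{1}{|E_\rho|}\#\{\alpha \in Z_\rho : \bw \in \alpha\}$ in all cases (the count being $0$ in case (1ii)). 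Meanwhile rule (2) gives $d_\rho - d_\rho^0 = [D_\rho]\dd(\w) - [D_\rho]\dd(\bw)$ by the same local-rule argument applied along a path from $\bw$ to $\w$ (note $\dd$ and $\bf d$ differ only in that $\dd$ records lines rather than points at infinity, so coefficients match). Combining, $d_\rho = [D_\rho]\dd(\w) - [D_\rho]\dd(\bw) + [D_\rho]\dd(\bw) + \tfrac{1}{|E_\rho|}\#\{\alpha\in Z_\rho: \bw\in\alpha\}$, which needs to be reconciled with the definition of $b_\rho$.

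Next I would compute $b_\rho$ directly from \eqref{DE}:
\[
b_\rho = [D_\rho]\Bigl(D_N - \dd(\w) + \dd(\bw) - \sum_{\sigma}\sum_{\alpha \in Z_\sigma:\bw\in\alpha}\tfrac{1}{|E_\sigma|}D_\sigma\Bigr) = a_\rho - [D_\rho]\dd(\w) + [D_\rho]\dd(\bw) - \tfrac{1}{|E_\rho|}\#\{\alpha \in Z_\rho : \bw\in\alpha\},
\]
using $[D_\rho]D_N = a_\rho$. Comparing with the expression for $d_\rho$ above, the claim $-b_\rho = \langle(\text{point of }L_\rho(d_\rho)), u_\rho\rangle$ becomes an identity that should hold because of the normalization $\dd(\bf w) = 0$ and the defining relation \eqref{eq:Npoly} for the $a_\rho$. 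Here I need to be careful about the sign and the normalization offset between "the index of a strip" and "the value $\langle m,u_\rho\rangle$ on the associated line": the rule fixes $L_\rho(d_\rho^0) = \{\langle *, u_\rho\rangle = -a_\rho\}$ (the line through $E_\rho$), and a leftward shift of $\tfrac{1}{|E_\rho|}$ decreases $\langle *, u_\rho\rangle$ by $\tfrac{1}{|E_\rho|}$; so $L_\rho(d_\rho) = \{\langle *, u_\rho\rangle = -a_\rho - (d_\rho - d_\rho^0)\}$, and I must check $a_\rho + (d_\rho - d_\rho^0) = b_\rho$. Plugging in $d_\rho - d_\rho^0 = [D_\rho]\dd(\w) - [D_\rho]\dd(\bw) - \tfrac{1}{|E_\rho|}\#\{\alpha\in Z_\rho:\bw\in\alpha\}$ (wait---sign: leftward shift moves the line left, i.e.\ in the $-u_\rho$ half-plane, matching the convention that strips further left have smaller $\langle *, u_\rho\rangle$; I would pin this down carefully against Figure~\ref{zzlocQ}) should yield exactly the boxed formula for $b_\rho$ above.

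The main obstacle I anticipate is getting all the signs and the orientation conventions exactly right: the direction "to the right of a zig-zag path," the sign of $u_\rho$ as an \emph{inward} normal, the direction in which $[D_\rho]\dd$ changes when crossing a path in $Z_\rho$ versus when crossing one in $Z_\sigma$ for $\sigma \neq \rho$ (only the former matters for the $D_\rho$-coefficient), and the correspondence between "leftward shift of the line" and "decrease of $\langle *, u_\rho\rangle$." Once those are fixed consistently with Figure~\ref{zzlocQ} and with Lemma~\ref{lem:DAM}, the proof is a bookkeeping identity: every term in the combinatorial description of $L_\rho(d_\rho)$ matches a term in \eqref{DE} defining $b_\rho$. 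I would organize the write-up as: (a) reduce to the scalar identity per ray; (b) translate rule (1) into the statement $[D_\rho]\dd(\bw) = d_\rho^0 - \tfrac{1}{|E_\rho|}\#\{\alpha\in Z_\rho:\bw\in\alpha\}$; (c) translate rule (2) into $d_\rho - d_\rho^0 = [D_\rho](\dd(\w)-\dd(\bw))$; (d) expand $b_\rho$ from \eqref{DE} and check equality, using $[D_\rho]D_N = a_\rho$ and the sign conventions established in (a).
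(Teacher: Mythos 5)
Your proposal is correct and is essentially the paper's own argument: reduce, ray by ray, to comparing the coefficient $b_\rho$ of $D_\rho$ in $\divebw_{\bw\w}$ (expanded from (\ref{DE}) as $a_\rho-[D_\rho]\dd(\w)+[D_\rho]\dd(\bw)-\tfrac{1}{|E_\rho|}\#\{\alpha\in Z_\rho:\bw\in\alpha\}$) with the strip index of $\w$, handling separately the two cases of whether $\bw$ lies on a path of $Z_\rho$ and using the $\tfrac{1}{|E_\rho|}$ shift per strip. The only point to repair is the sign bookkeeping you already flagged: since by definition of the strips $d_\rho=[D_\rho]\dd(\w)$, your steps (b) and (c) as written are mutually inconsistent, and with the paper's conventions the base strip satisfies $d_\rho^0=[D_\rho]\dd(\bw)-\tfrac{1}{|E_\rho|}\#\{\alpha\in Z_\rho:\bw\in\alpha\}$ (the strip containing $E_\rho$'s line lies to the \emph{right} of $\bw$'s strip), after which your identity in (d) closes exactly as in the paper.
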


\begin{proof}
In order for the line $L_\rho$ in (\ref{line}) to contain $E_\rho$, we must have $b_\rho=0,$ where $b_\rho$ is the coefficient of $D_\rho$ in (\ref{br}). We have to consider two cases.
\begin{enumerate}
	\item There is a zig-zag path $\alpha \in Z_\rho$ such that $\bw $ is contained in $\alpha$. We need $[D_\rho](\dd(\bw))=\frac 1 {|E_{\rho}|}+ [D_\rho](\dd(\w))$, which means $\w$ is contained in the strip $\mathscr S$ to the right of the one containing $\bw$, with $\alpha$ separating the two strips.  
    \item No zig-zag path in $Z_\rho$ contains $\bw$. In this case, we need the coefficients  $[D_\rho](\dd(\bw))=[D_\rho](D(\w))$, which means $\w$ is in the strip $\mathscr S$ containing $\bw$.
\end{enumerate}

If $\w_2$ is a white vertex in the strip to the left of the strip containing a white $\w_1$, then   $[D_\rho](\dd(\w_2))=[D_\rho](\dd(\w_1))+\frac{1}{|E_\rho|}$. So if we define $b_\rho(\w_1)$ and $b_\rho(\w_2)$ as in (\ref{br}) with $\w=\w_1$ and $\w=\w_2$ respectively, then $b_\rho(\w_2)=b_\rho(\w_1)+\frac 1 {|E_\rho|}$. Note that the line (\ref{line}) which bounds $N_{\bw \w_2}$ is given by 
\[
L_\rho(\w_2):=\{m \in {\rm M}_\R : \langle m,u_\rho \rangle = b_\rho(\w_2)\}. 
\]
The similar line  which bounds  $N_{\bw \w_1}$ is
\[
L_\rho(\w_1):=\{m \in {\rm M}_\R : \langle m,u_\rho \rangle = b_\rho(\w_1)\},
\]
so the line  $L_\rho(\w_2)$ is obtained from the line $L_\rho(\w_1)$ by shifting $1/{|E_\rho|}$ steps to the left.
\end{proof}

\subsection{The equations in \texorpdfstring{$\mathbb V_{\bw \w}$}{Vbw}}
We describe the equations of type 2 in Section \ref{S2.1.2}.

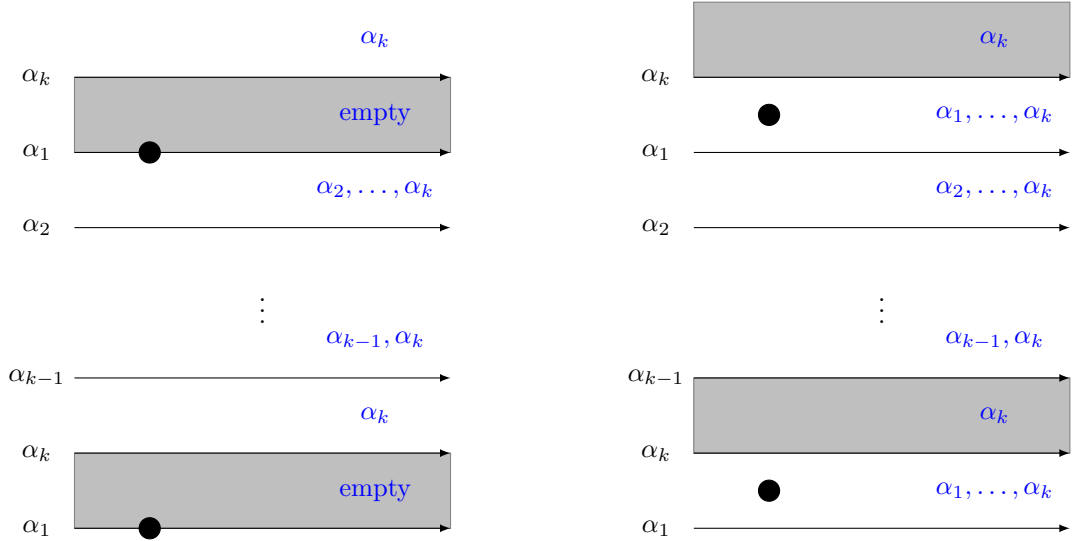
\begin{figure}\label{fignp}
	\centering
	\begin{tikzpicture}
		
		\draw[gray,line width=0.01, fill opacity=0.5, fill=gray] (-1,0) rectangle (4,1);
		\draw[gray,line width=0.01, fill opacity=0.5, fill=gray] (-1,5) rectangle (4,6);
		\draw[fill=black] (0,0) circle (4pt);
		\draw[fill=black] (0,5) circle (4pt);
		\node[] (no) at (-1.5,0){$\alpha_1$};
		\node[] (no) at (-1.5,1){$\alpha_k$};
		\node[] (no) at (-1.5,2){$\alpha_{k-1}$};
		\node[] (no) at (-1.5,4){$\alpha_{2}$};
		\node[] (no) at (-1.5,5){$\alpha_{1}$};
		\node[] (no) at (-1.5,6){$\alpha_k$};
		\draw[->](-1,0) -- (4,0);
		\draw[->](-1,1) -- (4,1);
		\draw[->](-1,2) -- (4,2);
		\draw[->](-1,4) -- (4,4);
		\draw[->](-1,5) -- (4,5);
		\draw[->](-1,6) -- (4,6);
		\node[blue] (no) at (3,0.5){empty};
		\node[blue] (no) at (3,1.5){$\alpha_k$};
		\node[blue] (no) at (3,4.5){$\alpha_2,\dots,\alpha_k$};
			\node[blue] (no) at (3,2.5){$\alpha_{k-1},\alpha_k$};
		\node[blue] (no) at (3,5.5){empty};
		\node[blue] (no) at (3,6.5){$\alpha_k$};
		\node[](no) at (1.5,3){$\vdots$};
	\end{tikzpicture} \hspace{20mm}
	\begin{tikzpicture}
		\draw[fill=black] (0,0.5) circle (4pt);
		\draw[fill=black] (0,5.5) circle (4pt);
		\draw[gray,line width=0.01, fill opacity=0.5, fill=gray] (-1,1) rectangle (4,2);
		\draw[gray,line width=0.01, fill opacity=0.5, fill=gray] (-1,6) rectangle (4,7);
		\node[] (no) at (-1.5,0){$\alpha_1$};
		\node[] (no) at (-1.5,1){$\alpha_k$};
		\node[] (no) at (-1.5,2){$\alpha_{k-1}$};
		\node[] (no) at (-1.5,4){$\alpha_{2}$};
		\node[] (no) at (-1.5,5){$\alpha_{1}$};
		\node[] (no) at (-1.5,6){$\alpha_k$};
		\draw[->](-1,0) -- (4,0);
		\draw[->](-1,1) -- (4,1);
		\draw[->](-1,2) -- (4,2);
		\draw[->](-1,4) -- (4,4);
		\draw[->](-1,5) -- (4,5);
		\draw[->](-1,6) -- (4,6);
		\node[blue] (no) at (3,0.5){$\alpha_1,\dots,\alpha_k$};
		\node[blue] (no) at (3,1.5){$\alpha_k$};
			\node[blue] (no) at (3,2.5){$\alpha_{k-1},\alpha_k$};
		\node[blue] (no) at (3,4.5){$\alpha_2,\dots,\alpha_k$};
		\node[blue] (no) at (3,5.5){$\alpha_1,\dots,\alpha_k$};
		\node[blue] (no) at (3,6.5){$\alpha_k$};
		\node[](no) at (1.5,3){$\vdots$};
	\end{tikzpicture} 
	\caption{ 
		The lifts of zig-zag paths $\alpha_1,\dots,\alpha_k$ in $Z_\rho$ divide the plane into strips. The side $L_{\rho}$ of the  small polygon $N_{\bw \w}$ and the columns of the matrix  $\mathbb V_{\bw \w}$ are determined by the strips containing $\bw$ and $\w$. The black vertex ${\bw}$ is the black dot. {On the left panel, $\bw$ is on a zig-zag path, and on the right, it is between two zig-zag paths}. Written inside each strip in blue is the subset of $Z_\rho$ that gives rise to equations in $\mathbb V_{\bw \w}$ if $\w$ is contained in that strip.  Exceptional strips are shaded. } \label{zzlocQ}  \end{figure} 

Let $\rho \in \Sigma(1)$ be a ray and let $Z_\rho=\{\alpha_1,\dots,\alpha_k\}$, where $\alpha_1,\dots,\alpha_k$ are labeled in cyclic order. Their lifts  to the universal cover of the torus divides it
 into strips, see Figure \ref{zzlocQ}. We denote  by ${\mathscr S}_i$ the strip immediately to the right of $\alpha_i$.
 
 \begin{proposition}
 The set of extra linear equations is described as follows:
 \begin{enumerate}
        \item  One of these zig-zag paths contains  ${\bw}$. We can assume it is $\alpha_1$. Then the subset of $Z_\rho$ that contributes an equation to $\mathbb V_{\bw \w}$ is:
             \begin{align}
           \text{empty} &\text{ if }\w \in {\mathscr S}_k; \nonumber\\
            \alpha_{i+1}, \dots, \alpha_k  &\text{ if }\w \in {\mathscr S}_i \text{ for some }i \neq k.\label{colcase1}
        \end{align}
        \item The vertex ${\bw}$ is not in any of  zig-zag paths  in $Z_\rho$. Then the   subset of $Z_\rho$ is
        \begin{align}
            \alpha_1,\dots, \alpha_k &\text{ if }\w \in {\mathscr S}_k; \nonumber \\
            \alpha_{i+1},\dots \alpha_k  &\text{ if }\w \in {\mathscr S}_i, \text{ for some }i \neq k.\label{colcase2}
        \end{align}
    \end{enumerate}
 \end{proposition}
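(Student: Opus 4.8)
The plan is to reduce the statement to the already-established description of the extra linear equations in Proposition~\ref{lemextraeq}, namely that the zig-zag paths $\alpha \in Z_\rho$ contributing an equation to $\mathbb V_{\bw \w}$ are exactly those for which $\nu(\alpha)$ appears with positive coefficient in the divisor (\ref{contpointsatinf}). So the whole task is an accounting of coefficients of $D_\rho$-type terms restricted to $\spectralcurve$, localized one ray $\rho$ at a time. Fix $\rho$ with $Z_\rho = \{\alpha_1,\dots,\alpha_k\}$ labeled in cyclic order. First I would unwind the definition of $\divebw_{\bw \w}$ in (\ref{DE}): its $D_\rho$-coefficient is
$b_\rho = a_\rho - [D_\rho]\dd(\w) + [D_\rho]\dd(\bw) - \tfrac{1}{|E_\rho|}\#\{\alpha \in Z_\rho : \bw \in \alpha\}$,
and by the combinatorial description of $\dd$ (strips $\mathscr S_\rho(d)$, as in Proposition~\ref{propsmalleq}) this depends only on which strip in the $\rho$-direction contains $\w$ (and on whether $\bw$ lies on a zig-zag path in $Z_\rho$ or strictly between two of them). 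Separately, $[D_\rho]$ of ${\bf d}(\w)-{\bf d}(\bw)$ counts intersections of zig-zag paths in $Z_\rho$ with a path from $\bw$ to $\w$, which again is read off from strip positions.

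Next I would compute, coefficient by coefficient at each $\nu(\alpha_j)$, the divisor in (\ref{contpointsatinf}):
$-\restr{D_N}{\spectralcurve} + {\bf d}(\w) - {\bf d}(\bw) + \sum_{\alpha \in Z}\nu(\alpha) + \restr{\floor{\divebw_{\bw \w}}}{\spectralcurve}.$
The terms $-\restr{D_N}{\spectralcurve} + \sum_{\alpha\in Z}\nu(\alpha)$ contribute $(a_\rho - 1)$ times $\sum_{\alpha_j \in Z_\rho}\nu(\alpha_j)$ wait---more precisely $-a_\rho + 1 = 1 - a_\rho$ to each $\nu(\alpha_j)$, but since all $\nu(\alpha_j)$ for $j=1,\dots,k$ get the same contribution from these global terms, the \emph{difference} between which $\alpha_j$ survives is governed entirely by $[D_\rho]({\bf d}(\w)-{\bf d}(\bw))$ plus the $\rho$-part of $\restr{\floor{\divebw_{\bw\w}}}{\spectralcurve}$, and the latter distributes $\floor{b_\rho}$ equally to all $\nu(\alpha_j)$. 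The only genuinely $j$-dependent term is ${\bf d}(\w)-{\bf d}(\bw)$, which by the local rules (1)--(2) for ${\bf d}$ in Section on the discrete Abel map picks up $\nu(\alpha_j)$ (with sign $+1$) precisely for those zig-zag paths $\alpha_j$ crossed when going from $\bw$ to $\w$ in the appropriate direction around the cylinder. Translating "crossed going from $\bw$'s strip to $\w$'s strip, counted cyclically starting just past $\alpha_1$" into the labeling conventions of Figure~\ref{zzlocQ} gives exactly the lists (\ref{colcase1}) in case $\bw \in \alpha_1$ and (\ref{colcase2}) in case $\bw$ is between two of the $\alpha_i$. Finally I would check the two "empty" versus "all of $Z_\rho$" endpoints: when $\bw \in \alpha_1$ and $\w \in \mathscr S_k$ (the strip immediately right of $\alpha_k$, i.e. left-adjacent to $\alpha_1$ across the cylinder) the path from $\bw$ to $\w$ crosses no path in $Z_\rho$, giving the empty set; whereas when $\bw$ is strictly between zig-zag paths, the extra $\tfrac{1}{|E_\rho|}$ shift in $b_\rho$ (no subtraction of $\bw$-incidences) bumps $\floor{b_\rho}$ up by one in the relevant range, which is exactly what turns "empty" into "$\alpha_1,\dots,\alpha_k$" in (\ref{colcase2}). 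This also matches the shaded "exceptional strips" in Figure~\ref{zzlocQ}, which are where the floor function jumps.

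The main obstacle I expect is the bookkeeping of signs and the cyclic labeling: one has to be careful that "to the right of $\alpha_i$ when facing along the path" is consistent with the orientation used to define ${\bf d}$ (rules (1)--(2)) and with the inward normal $u_\rho$, and that the floor $\floor{b_\rho}$ is evaluated on the correct side of each integer jump. Concretely, the subtle point is that $b_\rho$ is generally a rational number of the form (integer)$/|E_\rho|$ shifted by the global constant $a_\rho - [D_\rho]\dd(\w)+[D_\rho]\dd(\bw)$, so $\floor{b_\rho}$ changes by $1$ exactly when $\w$ moves across an exceptional strip; identifying that transition precisely with the index $i$ in (\ref{colcase1})--(\ref{colcase2}) is where an off-by-one is easiest to make. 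I would handle this by doing the $|E_\rho| = 1$ (primitive side) case first as a sanity check against Remark~\ref{rem::prim} and equation (\ref{eq:simplepoly}), then promoting to general $|E_\rho|$ by tracking the strip index $d \in \tfrac{1}{|E_\rho|}\Z$ and the floor simultaneously. Everything else is a direct substitution into (\ref{contpointsatinf}) using Lemma~\ref{lem:DAM} and the strip description of $\dd$.
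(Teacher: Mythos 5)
Your plan is essentially the paper's own proof: localize (\ref{contpointsatinf}) at the ray $\rho$, note that $-\restr{D_N}{\spectralcurve}$, $\sum_{\alpha\in Z}\nu(\alpha)$ and $\restr{\floor{\divebw_{\bw\w}}}{\spectralcurve}$ contribute uniformly to every $\nu(\alpha_j)$ so that the only $j$-dependent term is ${\bf d}(\w)-{\bf d}(\bw)$, and track the jump of $\floor{b_\rho}$ across the exceptional strip to split the two cases --- which is exactly what the paper does after choosing the lift of $\w$ so that its copy of $\mathscr S_i$ is the one adjacent to $\bw$. One caveat: with the paper's conventions the crossed paths $\alpha_1,\dots,\alpha_i$ enter ${\bf d}(\w)-{\bf d}(\bw)$ with coefficient $-1$ while the uniform terms sum to $+1$, so the equations come from the \emph{uncrossed} paths $\alpha_{i+1},\dots,\alpha_k$; your ``sign $+1$ for crossed paths'' is a sign slip, but it is precisely the bookkeeping you flagged as the main risk and does not change the structure or outcome of the argument.
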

 \begin{proof}
Plugging \[
\divebw_{\bw \w}=D_N-\dd(\w)+\dd(\bw)-\sum_{\rho \in \Sigma(1)}\sum_{\alpha \in Z_\rho : \bw \in \alpha } \frac{1}{|E_{\rho}|}D_{\rho}.
\]  into (\ref{extraeq}), we first observe that (\ref{extraeq}) does not change if we replace $\w$ by its any translate  on the universal cover because ${\bf d}(\w)-{\bf d}({ \bw})$ changes by the same amount as $\restr{[{E_{{\bw}\w}}]}{\spectralcurve}$ but with the opposite sign. Therefore we may assume that among all its possible translates in the universal cover, the strip $\mathscr S_i$ is the one that is immediately to the right of ${\bw}$. Then we have
\begin{align*}
\restr{({\bf d}(\w)-{\bf d}({\bw}))}{\spectralcurve \cap D_\rho}&=-\nu(\alpha_1)- \cdots -\nu{(\alpha_i)}\end{align*}
and the coefficient of $D_\rho$ in ${(\dd(\w)-\dd({\bw}))}$ is $ -\frac{i}{k} $.

Now we distinguish two cases:
\begin{enumerate}
    \item Suppose $\bw$ is contained in $\alpha_1$, so that the coefficient of $D_\rho$ in $ \sum_{\rho \in \Sigma(1)}\sum_{\alpha \in Z_\rho : \bw \in \alpha } \frac{1}{|E_{\rho}|}D_{\rho}$ is $\frac 1 k . $ Then we have 
    \[
    \restr{\floor{-\dd(\w)+\dd(\bw)-\sum_{\rho \in \Sigma(1)}\sum_{\alpha \in Z_\rho : \bw \in \alpha } \frac{1}{|E_{\rho}|}D_{\rho}}}{ \spectralcurve \cap D_\rho }=0,
 \]
    so that (\ref{extraeq}) is 
    $\nu({\alpha_{i+1}})+\cdots+\nu({\alpha_k})$, which proves  (\ref{colcase1}). 
    \item Suppose $\bw$ is not contained in any of the $\alpha_i$, so that the coefficient of $D_\rho$ in  $\sum_{\rho \in \Sigma(1)}\sum_{\alpha \in Z_\rho : \bw \in \alpha } \frac{1}{|E_{\rho}|}D_{\rho}$ is $0.$ We have 
    \[
    \restr{\floor{-\dd(\w)+\dd(\bw)-\sum_{\rho \in \Sigma(1)}\sum_{\alpha \in Z_\rho : \bw \in \alpha } \frac{1}{|E_{\rho}|}D_{\rho}}}{ \spectralcurve \cap D_\rho }=\begin{cases}0 &\text{ if }i \neq k,\\
    \sum_{j=1}^k \nu({\alpha_j}) &\text{ if }i=k.
    \end{cases}
    \]
    This gives 
    \begin{align*}
    (\ref{extraeq})=\begin{cases}
    \nu({\alpha_{i+1}})+\cdots +\nu(\alpha_k) &\text{ if }i \neq k,\\
    \sum_{j=1}^k \nu({\alpha_j}) &\text{ if }i=k.\end{cases}\end{align*} 
  We obtain (\ref{colcase2}).
\end{enumerate}
\end{proof}

\bibliography{references}
\Addresses

\end{document}